\theoremstyle{plain}
\newtheorem{theorem}{Theorem}
\newtheorem{proposition}[theorem]{Proposition}
\newtheorem{corollary}[theorem]{Corollary}
\newtheorem{lemma}[theorem]{Lemma}
\newtheorem{proposition.definition}[theorem]{Proposition/Definition}
\newtheorem{theoremalpha}{Theorem}
\newtheorem{corollaryalpha}[theoremalpha]{Corollary}
\theoremstyle{definition}
\newtheorem{definition}[theorem]{Definition}
\newtheorem{remark}[theorem]{Remark}
\newtheorem{problem}[theorem]{Problem}
\newtheorem{conjecture}[theorem]{Conjecture}
\newtheorem{NC}{}
\newcommand{\lra}{\longrightarrow}
\newcommand{\noi}{\noindent}
\newcommand{\PP}{\mathbf{P}}
\newcommand{\RR}{\mathbf{R}}
\newcommand{\ZZ}{\mathbf{Z}}
\newcommand{\CC}{\mathbf{C}}
\newcommand{\OO}{\mathcal{O}}
\newcommand{\II}{\mathcal{I}}
\newcommand{\eps}{\varepsilon}
\newcommand{\rk} {\text{rank }}
\newcommand{\Image}{\textnormal{Im}}
\newcommand{\HH}[3]{H^{{#1}} \big( {#2} , {#3}
\big) }
\newcommand{\HHH}[3]{H^{{#1}} \Big( {#2} \, , \, {#3}
\Big) }
\newcommand{\hh}[3]{h^{{#1}} \big( {#2} , {#3}
\big) }
\newcommand{\tn}[1]{\textnormal{#1}}
\newcommand{\pr}{\prime}
\newcommand{\lin}{\equiv_{\text{lin}}}
\newcommand{\Linser}[1]{| \mspace{1.5mu} {#1}
\mspace{1.5mu} |}
\newcommand{\linser}[1]{\Linser{  {#1}  }}
\newcommand{\Tor}{\textnormal{Tor}}
\newcommand{\Sym}{\textnormal{Sym}}
\newcommand{\Hh}[2]{H^{{#1}}({#2})}
\newcommand{\ev}{\textnormal{ev}}
\newcommand{\ol}[1]{\overline{#1}}
\numberwithin{equation}{section}
\numberwithin{theorem}{section}
\begin{document}

\title{Asymptotic Syzygies of Algebraic Varieties}

 \author{Lawrence Ein}
  \address{Department of Mathematics, University Illinois at Chicago, 851 South Morgan St., Chicago, IL  60607}
 \email{{\tt ein@uic.edu}}
 \thanks{Research of the first author partially supported by NSF grant DMS-1001336.}

 \author{Robert Lazarsfeld}
  \address{Department of Mathematics, University of Michigan, Ann Arbor, MI
   48109}
 \email{{\tt rlaz@umich.edu}}
 \thanks{Research of the second author partially supported by NSF grant DMS-0652845.}

\maketitle
\setlength{\parskip}{0 pt}
\setcounter{tocdepth}{1}

\tableofcontents
\setlength{\parskip}{.20 in}
 
 \section{Introduction}

 The purpose of this paper is to study the asymptotic behavior of the syzygies of a  smooth projective variety  $X$ as the positivity of the embedding line bundle grows.  We prove that as least as far as grading is concerned,  the minimal resolution of the ideal of $X$ has a surprisingly uniform asymptotic shape: roughly speaking, generators   eventually appear in almost all degrees permitted by  Castelnuovo-Mumford regularity. This suggests in particular that a widely-accepted intuition  derived from the case of curves -- namely that syzygies become simpler as the degree  of the embedding increases --  may have been misleading. For Veronese embeddings of projective space, we give an effective statement that  in some cases is optimal, and conjecturally always is so. Finally, we propose a number of questions and open problems concerning asymptotic syzygies of higher-dimensional varieties. 
  
 Turning to details,  let $X$ be a smooth projective variety of dimension $n$ defined over an algebraically closed field ${k}$, and let $L$ be a very ample divisor on $X$. Thus $L$ defines an embedding
 \[   X\  \subseteq \  \PP^{r}  = \ \PP \HH{0}{X}{\OO_X(L)}, \]
 where $r = r(L) = \hh{0}{X}{\OO_X(L)} - 1$.
We propose to study   the syzygies of $X$ in $\PP^r$ when $L$ is very positive. To this end, let $S = \Sym \, \HH{0}{X}{\OO_X(L)}$ be the homogeneous coordinate ring of $\PP^r$, and let 
 \[ R \ = \  R(L) \ = \ \oplus \, \HH{0}{X}{\OO_X(mL)} \] be the graded ring associated to $L$, viewed as an $S$-module. Then $R$ has a minimal graded free resolution $E_\bullet = E_\bullet(X;L)$:
 \[
 \xymatrix{
0 & R \ar[l]& \oplus S(-a_{0,j}) \ar[l]  \ar@{=}[d]& \oplus S(-a_{1,j})  \ar[l] \ar@{=}[d]& \ar[l] \ldots & \ar[l] \oplus S(-a_{r,j}) \ar[l] \ar@{=}[d] &  \ar[l]0 . \\ 
&  & E_0 & E_1 & &E_r }
 \]
 Starting with the pioneering work \cite{Kosz1} of Green, there has been considerable interest in understanding what one can say about the degrees $a_{p,j}$  of the generators  of the  $p^{\text{th}}$ module of syzygies of $R$. Note that if $L$ is normally generated -- i.e. if $E_0 = S $ -- then $E_\bullet$ determines a resolution of the homogeneous ideal of $I_X$ of $X$. So in the presence of normal generation, the question amounts to asking about the degrees of the equations defining $X$ and the syzygies among them. Conditions for projective normality and degrees of defining equations were studied over the years by many mathematicians, including Castelnuovo \cite{Castelnuovo}, Mumford \cite{Mumford1}, \cite{Mumford2},  and Bombieri and his school   \cite{Bombieri}, \cite{Catanese}, \cite{Catanese2}.  Green's idea was that one should see the classical results  as the beginning of a   more general picture for higher syzygies.

Specifically, Green considered the case when $X$ is a curve of genus $g$, and $L = L_d$ is a divisor of degree $d \ge 2g+1$.  Write $r_d = r(L_d)$, so that  \[ r_d \ = \ d-g.\] It is classical that  $L_d$ is normally generated  and that furthermore $E_\bullet$ has length $r_d-1$. Green proved that  if $1 \le p \le d - 2g -1$, then all the generators of $E_p$ appear in the lowest possible degree $p +1$, i.e.
\[     E_p \ = \ \oplus\,  S(-p-1) \ \ \text{ for } \  p \in\big  [1, d - 2g -1\big]. \] Very concretely, this means that if $d - 2g - 1 \ge 1$, then the homogeneous ideal $I_X$ of $X$ is generated by quadrics;\footnote{This was also known classically.}  if $d - 2g - 1 \ge 2$, then the module of syzygies among quadratic generators $q_\alpha \in I_X$ is spanned by relations of the form 
\[  \sum \ell_\alpha \cdot q_\alpha \ = \ 0\]
where the $\ell_\alpha$ are linear forms; and so on.\footnote{The conclusion is summarized by saying that $L_d$ satisfies Property  ($N_p$) when  $p \le d - 2g-1$.   It is also known that $E_p$ can acquire generators of degree $ p + 2$ when $d -2g \le p \le d-g-1$, and that in fact this always happens when $d \ge 3g$. } 
Observing that $d - 2g -1 =(r_d - 1) - g$, one can view Green's theorem as asserting that as $d$ grows, all but a fixed number of the syzygy modules of $I_X$ are as simple as possible in terms of degrees of generators. 

After \cite{Kosz1}, several results appeared giving analogous statements about the higher syzygies of  varieties of larger dimension. For example, Green \cite{Kosz2} proved that when $X =  \CC\PP^n$ and $L_d \in \linser{\OO_{\PP^n}(d)}$,   one has
\begin{equation}
E_p(X;L_d) \ = \ \oplus \,  S(-p-1)  \ \ \text{for } 1 \le p \le d. \tag{*}
\end{equation}
This was generalized by the authors in \cite{SAD} to an arbitrary smooth complex projective variety $X$ of dimension $n$. Specifically, if 
\[  L_d \ = \  K_X + (n+1 + d)B    \]
for some very ample divisor $B$ on $X$, 
then (*) continues to hold. For abelian varieties, Pareschi \cite{Pareschi} established analogous bounds that do not depend on the dimension. These and similar results have led to the philosophy -- espoused among others by the current authors --  that syzygies become simpler as the positivity of the embedding line bundle grows. 

The starting point of the present paper was the remark that when $n = \dim X \ge 2$, the known results just summarized describe only a small fraction of the syzygies of $X$. In fact, for divisors $L_d$ of the sort appearing above, one has
\[      r(L_d) \ = \ O(d^n). \]
In particular, the length of the resolution $E_\bullet(X, L_d)$ grows like a polynomial of degree $n$ in $d$. So in dimensions two and higher, statements such as (*) that are linear in $d$ ignore most of the syzygy modules that occur. It therefore seemed interesting to ask whether one can say anything about the overall shape of  $E_\bullet(X, L_d)$ for $d \gg 0$.\footnote{A question along these lines was raised by Green in \cite[Problem 5.13]{Kosz1}, and  by the authors in \cite[Problem 4.4]{SAD}.} It turns out that -- at least as far as grading is concerned -- there is a surprisingly clean asymptotic picture, which moreover suggests that the intuition derived from curves may have been misleading. 

In order to state our results we require one piece of additional notation. With $X$ and $L$ as above, define (for the purposes of this Introduction): 
\[
K_{p,q} \big ( X; L \big) \ = \ \Big \{
\parbox{2.3in}{\begin{center} minimal generators of $E_p(X;L)$ of degree $p + q$\end{center}}
\Big  \}.  
\]
  Thus $K_{p,q}(X;L)$ is a finite-dimensional vector space, and
\[  E_p(X;L) \ = \  \underset{q}{\bigoplus}   \ K_{p,q}(X;L) \, \otimes _k \,  S(-p-q). \]
So for example, if $L$ is normally generated,  then 
\[ K_{1,q}(X;L) \ = \ \big \{ \text{degree $(q+1)$ generators of the homogenous ideal $I_X$} \big \}.
\]
  Or again,  to say that 
$ E_p(X;L)  = \oplus \, S(-p-1)$ means that $K_{p,q}(X;L) = 0 $ for $q \ne 1$.  

Our goal is to study the asymptotic behavior of these groups for increasingly positive line bundles on $X$. Fix an ample divisor $A$, and an arbitrary divisor $P$, and put
\[
L_d \ = \  dA + P.
\]
We will always assume that $d$ is sufficiently large so that $L_d$ is very ample, and we note that   \[ r_d \ =_{\text{def}} \ r(L_d)\] is given for large $d$ by a polynomial of degree $n$ in $d$. Concerning the syzygies of $L_d$ for $d \gg 0$, the picture is framed by three general facts. First, as is well known:
\begin{equation}
\text{$ L_d$ is normally generated provided that  $d$ is sufficiently large.}
\tag{1}\end{equation} Next, 
elementary considerations of Castelnuovo-Mumford regularity show that if $d \gg 0 $, then
\begin{equation}   K_{p,q}(X; L_d) \ = \ 0 \ \text{ for } \ \ q > n + 1. \tag{2}
\end{equation}
Finally, results  of Green \cite{Kosz1}, Schreyer \cite{Schreyer} and Ottaviani--Paoletti \cite{OP} imply that   $K_{p,n+1}(X;L_d)$ can be non-vanishing for large $d$ only when \[
  p_g(X)  \ =_{\text{def}}\ \hh{0}{X}{\OO_X(K_X)}\ \ne \ 0, \] in which case
\begin{equation}
K_{p,n+1}(X;L_d) \ \ne \ 0 \ \Longleftrightarrow \   p \, \in \, \big [ r_d -n + 1 -p_g   , r_d - n \big]. \tag{3}
\end{equation}
So to determine the grading of $E_\bullet(X;L_d)$ for $d \gg 0$,  the issue is to understand
which of the groups $K_{p,q}(X;L_d)$ are non-vanishing for $1 \le p \le r_d$ and  $1 \le q \le n$.

Our main result states that asymptotically in $d$, the vector space in question is non-zero for essentially all   $p \in [1, r_d]$ and $q \in [1, n]$. 
\begin{theoremalpha} \label{Main.Thm.Intro}
Let $X$ and $L_d$ be as in the previous paragraph, and fix an index $1 \le q \le n$.  There exist constants $C_1 , C_2> 0$ with the property that if $d$ is sufficiently large, then  \[  K_{p, q}(X; L_d) \ \ne \ 0  \]
for every value of $p$ satisfying
 \[  
C_1\cdot d^{q-1} \ \le \ p \ \le \ r_d - C_2 \cdot  d^{n-1} .  \]
If moreover 
$\HH{i}{X}{\OO_X} = 0$ for $0 < i < n$, then    the non-vanishing holds in the range
 \[  
C_1\cdot d^{q-1} \ \le \ p \ \le \ r_d - C_2 \cdot  d^{n -q} .  \]

\end{theoremalpha}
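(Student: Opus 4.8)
The plan is to translate the non-vanishing into a statement about the cohomology of exterior powers of the \emph{syzygy bundle} of $L_d$, to reduce that to a numerical inequality by Castelnuovo--Mumford vanishing, and then to certify the inequality on the claimed range by a Chern-class computation, with Serre duality supplying the sharp upper bound under the cohomological hypothesis.

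\textbf{Step 1 (syzygy bundle).} Let $M_d = M_{L_d}$ be the kernel bundle, defined by $0 \to M_d \to \HH{0}{X}{\OO_X(L_d)}\otimes_k\OO_X \to \OO_X(L_d)\to 0$; it has rank $r_d$, determinant $\OO_X(-L_d)$, $K$-theory class $(r_d+1)[\OO_X]-[\OO_X(L_d)]$, and total Chern class $\big(1+c_1(\OO_X(L_d))\big)^{-1}$. For $d\gg0$ the divisor $L_d$ is normally generated and $\HH{i}{X}{\OO_X(mL_d)}=0$ for $i\ge1$, $m\ge1$, so repeated use of the standard exact sequences of Koszul cohomology gives, in the relevant range,
\[ K_{p,q}(X;L_d)\ \cong\ \HH{1}{X}{\wedge^{p+1}M_d\otimes\OO_X((q-1)L_d)}, \]
and, if moreover $\HH{i}{X}{\OO_X}=0$ for $0<i<q$, the twist-free form $K_{p,q}(X;L_d)\cong\HH{q}{X}{\wedge^{p+q}M_d}$. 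Everything now reduces to showing this cohomology is nonzero; the twist-free description is available precisely under the hypothesis of the theorem (which forces $\HH{i}{X}{\OO_X}=0$ for $0<i<q$, as $q\le n$), and this is the extra leverage behind the improved bound.

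\textbf{Step 2 (core non-vanishing).} Exterior powers of $M_d$ have Castelnuovo--Mumford regularity with respect to $L_d$ bounded linearly in the exponent; together with Serre duality and the vanishing of the intermediate cohomology of $\OO_X(mL_d)$, this shows that on a suitable sub-interval of $p$ the group $\HH{1}{X}{\wedge^{p+1}M_d\otimes\OO_X((q-1)L_d)}$ is, up to an $H^0$ controlled by the Koszul differential, the only nonzero cohomology. Hence $\dim K_{p,q}(X;L_d)$ is governed by the Euler characteristic, which, on iterating the exact sequence of Step 1, is the explicit expression
\[ \chi\big(X,\wedge^{p+1}M_d\otimes\OO_X((q-1)L_d)\big)\ =\ \sum_{j=0}^{p+1}(-1)^{j}\binom{r_d+1}{\,p+1-j\,}\,\chi\big(X,(q-1+j)L_d\big), \]
each $\chi(X,(q-1+j)L_d)$ being polynomial in $d$ and $j$ by Riemann--Roch. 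One must show this has the correct sign --- equivalently, differs from the rank of the relevant Koszul multiplication map --- throughout $C_1 d^{q-1}\le p\le r_d-C_2 d^{n-1}$. I would do this by recognizing the sum as the coefficient of $t^{p+1}$ in $\int_X (1+t)^{r_d+1}\big(1+t\,e^{c_1(L_d)}\big)^{-1} e^{(q-1)c_1(L_d)}\,\mathrm{td}(X)$ and analyzing its asymptotics in $d$: the dominant term forces non-vanishing once $p$ exceeds $C_1 d^{q-1}$, while a competing term of size $\sim d^{n-1}$ overtakes it only when $r_d-p$ drops below order $d^{n-1}$, producing the upper cut-off. The constants $C_1,C_2$ emerge from this analysis; the case $X=\PP^n$, where the invariants are simplest, is the model and recovers the Ottaviani--Paoletti bounds.

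\textbf{Step 3 (sharp upper range; the hypothesis; the obstacle).} Under $\HH{i}{X}{\OO_X}=0$ for $0<i<n$, rerun the analysis through $K_{p,q}(X;L_d)\cong\HH{q}{X}{\wedge^{p+q}M_d}$, the isomorphism $\wedge^{p+q}M_d\cong\wedge^{r_d-p-q}M_d^\vee\otimes\OO_X(-L_d)$, and Serre duality: this converts ``$p$ near $r_d$'' into ``$p$ near $0$'' for an $\OO_X(K_X+L_d)$-twisted Koszul group in homological degree $n-q$, where the regularity and Chern-class estimates are sharper and reach down to $r_d-C_2 d^{n-q}$; these low-$p$ classes may also be produced by restriction to a general complete-intersection subvariety of dimension $n-q$, on which the relevant strand is the top one and non-vanishing is governed by $p_g$ as in $(3)$. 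Without the hypothesis, the first description of $K_{p,q}$ acquires correction terms assembled from $\HH{i}{X}{\OO_X}$, $0<i<n$, of size $\sim d^{n-1}$, so tracking them yields only $r_d-C_2 d^{n-1}$. \emph{The main obstacle} is Step 2: controlling the cohomology of the exterior powers $\wedge^{p+1}M_d$ --- bundles of binomially large rank and no evident structure --- uniformly as $d\to\infty$ while $p$ sweeps an interval of length $\sim d^n$, establishing the higher-cohomology vanishing on exactly the right sub-interval (losing nothing in the exponent), and extracting from the Chern-roots computation the precise point at which the Euler characteristic changes sign. Everything else --- Step 1, the duality of Step 3, the bookkeeping with the fundamental exact sequences --- is formal.
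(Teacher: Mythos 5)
Your Step 1 matches the paper's Proposition \ref{Non.Van.Wedge.p+1.Gives.Koszul} and Corollary \ref{Higher.Cohom.Interpr.Kpq}, and your use of duality in Step 3 to get the improved upper bound under $\HH{i}{X}{\OO_X}=0$ is exactly how the paper obtains its second assertion (Proposition \ref{Duality}). But Step 2 — which you correctly identify as the crux — is not merely an obstacle to be overcome by more careful estimates: as formulated it cannot work on most of the claimed range. Your plan is to isolate $h^1\big(\Lambda^{p+1}M_d\otimes\OO_X((q-1)L_d)\big)$ from the Euler characteristic by showing the other cohomology groups vanish (up to a controlled $H^0$) on a suitable sub-interval. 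However, the standard exact sequences identify $H^{i}\big(\Lambda^{p+1}M_d\otimes\OO_X((q-1)L_d)\big)$ for $i\ge 2$ with $K_{p-i+1,\,q+i-1}(X;L_d)$, and the theorem being proved asserts precisely that these groups are \emph{nonzero} once $p-i+1\ge C\cdot d^{q+i-2}$, i.e.\ everywhere except the bottom sliver $p\le O(d^{q})$ of the interval $[C_1d^{q-1},\,r_d-C_2d^{n-1}]$, whose length is of order $d^{n}$. So on the bulk of the range the alternating sum $\chi=h^0-h^1+h^2-\cdots$ has many competing nonzero terms and its sign or size cannot certify $h^1\ne 0$. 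Even on the bottom sliver, the vanishing of $H^{\ge 2}$ you would need is exactly Conjecture \ref{Vanishing.Kpq.Conjecture} of the paper (stated there as open and "quite challenging"), and the $H^0$ term equals the space of Koszul cycles $Z_{p+1,q-1}$, whose dimension differs from the binomial expression by the rank of a Koszul differential — which is the unknown you are trying to compute. The Euler-characteristic identity is a formal consequence of minimality and the Hilbert function of $R(L_d)$, so it carries no information about which individual $K_{p,q}$ in a given column are nonzero without an independent vanishing input.

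The paper's route is entirely different and supplies what your argument is missing: an explicit mechanism producing nonzero classes. One chooses a complete-intersection subscheme $Z_d=D_1\cap\cdots\cap D_c$ with $\sum D_i\lin L_d+B-K_X$, so that $\HH{q-1}{X}{\II_{Z_d/X}(B+L_d)}\ne 0$ (it receives $\HH{n}{X}{\OO_X(K_X)}$ through the Koszul resolution of $\II_{Z_d/X}$), together with a linear space $\PP(W_d)$ cutting out $Z_d$. The surjection $\Lambda^{w}M_{V}\to\II_{Z_d/X}$ then maps $K_{w+1-q,q}$ onto this nonzero group; the surjectivity on $H^{q-1}$ is proved by induction on dimension via restriction to a hyperplane section (Theorem \ref{First.Technical.Thm}), and the full interval of $p$ is swept out by enlarging the secant plane without changing its intersection with $X$ (Theorem \ref{Second.Technical.Theorem}). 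If you want to salvage a numerical approach, you would first have to prove the vanishing conjecture for all higher weights simultaneously, which is strictly harder than the theorem itself.
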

\noi   
Thus $K_{p,2}(X;L_d)$ becomes non-zero at least linearly in $d$; $K_{p,3}(X;L_d)$ becomes non-zero at least quadratically in $d$; and so on.\footnote{We conjecture that conversely
\[
K_{p,q}(X; L_d) \ = \ 0 \ \ \text{ for  } \  \ p \le O(d^{q-1}).\] 
  See \S \ref{Conjs.Open.Problems.Section}
 for further discussion.}
Furthermore the corresponding strands of the resolution of $X$ have almost the maximal possible length $r(L_d)$. We remark that it was established by Ottaviani and Paoletti in \cite{OP} that 
\[ K_{p,2}(\PP^2, \OO_{\PP^2 }(d)) \ \ne  \ 0 \ \  \text{for } \ 3d -2 \, \le \, p \, \le \, r_d-2, \]
and Eisenbud et.\ al.\ \cite{EGHP} proved a non-vanishing  on other surfaces $X$ implying (at least if $p_g(X) = 0$) that $K_{p,2}(X, L_d) \ne 0$ for $O(d) \le p \le O(d^2)$.\footnote{The authors just cited were mainly concerned with the failure of property ($N_p$), so they did not explicitly address the question of giving a range of $p$ for which $K_{p,2}(X; L_d)\ne 0$. However the existence of an interval of the indicated shape follows from their results thanks to observations (2) and (3) above. We remark also that the
computations  in  \cite{OP} yield  for free that
\[ 
K_{p,2}(\PP^n, \OO_{\PP^n}(d) ) \ \ne \ 0 \ \ \text{ for \ }  O(d) \le p \le O(d^2). \]
But the non-vanishing of this group for $O(d) \le p \le O(d^n)$ already seems to be non-trivial when $n \ge 3$.  By contrast, the fact that $K_{p,1}(X; L_d) \ne 0$ for $1 \le p \le O(d^n)$ can be seen directly in the case at hand by observing that $X$ lies on scrolls of codimension $r_d - O(d^{n-1})$ in $\PP^{r_d}$.
}
These results were important in guiding our thinking about these questions.

The theorem implies  in particular that for every index $1 \le q \le n$, 
there exist functions
$p_-(d) $ and $ p_+(d)$, with
\[
\lim_{d\to \infty} \frac{ p_-(d)}{r_d} \ = \ 0\ \ , \ \ \lim_{d\to \infty} \frac{ p_+(d)}{r_d} \ = \ 1,  
\]
such  that  if $d$ is sufficiently large  then
\[  K_{p, q}(X; L_d) \ \ne \ 0 \ \ \text{ for all} \ \ p \in \big [\,   p_-(d) \, , \, p_+(d)\,  \big ].
\]
It is amusing to visualize this conclusion in  terms of the {Betti diagram} of $X$. By definition, this is the $(n+1) \times r_d$ matrix whose $(q,p)^{\text{th}}$ entry records the dimension of $ K_{p,q}(X; L_d)$. Now  imagine constructing a ``normalized Betti diagram" by horizontally rescaling  the Betti table so that it fits into a  rectangle with $n+1$ rows and fixed width (independent of $d$).  Then the non-zero entries occupy a region which as $d \to \infty$   entirely  fills up the first $n$ rows.  Similarly, the result of Green et.\  al.\  quoted above -- which asserts that $K_{p, n+1}(X; L_d) \ne 0$ for only a fixed number of values of $p$ --  implies that the possibly non-zero entries in the bottom row of the normalized diagram are concentrated in a segment whose length approaches $0$ as $d \to \infty$.  

A particularly interesting case of these questions  arises when  $X = \PP^n$. Here it is natural to take $L_d \in \linser{\OO_{\PP^n}(d)}$, so that  we are studying the asymptotic behavior of the syzygies of the $d$-fold  Veronese embedding of $\PP^n$. These Veronese syzygies  have been the focus of a great deal of attention, e.g.\ \cite{OP}, \cite{Rubei}, \cite{BCR}, \cite{BCR2}. We establish an effective    result that at least in some cases  is best-possible. 
\begin{theoremalpha} \label{Veronese.Syzygies.Intro}      
Fix an index $1 \le q \le n$. If $d$ is sufficiently large, then  
\[ K_{p,q}\big(\PP^n; \OO_{\PP}(d)\big) \ \ne \ 0\]
whenever
\[
 \binom{d+q}{q} \, - \,  \binom{d-1}{q} \, - \, q   \ \le \  p \ \le \ \binom{d +n  }{n} \, - \, \binom{d+n -q}{n-q}\, + \,  \binom{n}{n-q}  - q-1. \]
\end{theoremalpha}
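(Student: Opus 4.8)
The plan is to deduce the effective statement from the proof method of Theorem A, refining the general bounds into the precise binomial expressions by exploiting the explicit geometry of $\PP^n$. The strategy for producing nonzero Koszul cohomology is the standard one: to show $K_{p,q}(\PP^n;\OO(d)) \neq 0$ it suffices to exhibit a nonzero class in the Koszul cohomology of the relevant complex, and the most robust way to do this is via the "splitting off a subvariety" technique. Concretely, one chooses a subvariety (or subscheme) $Y \subset \PP^n$ together with a line bundle on $Y$ so that the $K_{p',q'}$ of the pair $(Y, \text{bundle})$ inject into, or otherwise control, the $K_{p,q}$ of $\PP^n$; on $\PP^n$ the natural candidates are linear subspaces $\PP^m \subset \PP^n$ and the Veronese-type or scroll-type subvarieties that appear in the work of Ottaviani--Paoletti. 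The first step, then, is to set up this comparison precisely: identify the right auxiliary variety $Y$ for each $q$, so that a known nonvanishing of $K_{a,q}(Y;-)$ forces $K_{p,q}(\PP^n;\OO(d)) \neq 0$ for $p$ in a window whose endpoints can be computed exactly.

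For the \emph{lower} end of the range, $p \geq \binom{d+q}{q} - \binom{d-1}{q} - q$, I expect the relevant mechanism to involve a $q$-dimensional linear subspace $\PP^q \subset \PP^n$: restricting $\OO_{\PP^n}(d)$ there gives $\OO_{\PP^q}(d)$, and the quantity $\binom{d+q}{q} - \binom{d-1}{q}$ is exactly $h^0(\PP^{q-1}, \OO(d))$ shifted appropriately — i.e., it counts sections cutting out a hyperplane's worth of the Veronese, which is precisely the kind of count that governs where the first nonzero syzygy in strand $q$ can begin. One sets up the restriction maps on Koszul cohomology (using that the restriction $H^0(\PP^n,\OO(m)) \to H^0(\PP^q,\OO(m))$ is surjective for all $m$, so that exact sequences stay exact), and tracks how a generator appearing at the bottom of strand $q$ on $\PP^q$ propagates. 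For the \emph{upper} end, $p \leq \binom{d+n}{n} - \binom{d+n-q}{n-q} + \binom{n}{n-q} - q - 1$, the controlling object should be duality: one uses the fact that $\OO_{\PP^n}(d)$ is $\OO_{\PP^n}$-twisted by $K_{\PP^n} + (n+1)H$ up to a shift, so Serre duality on Koszul cohomology (the $K_{p,q} \cong K_{r-n-p, n+1-q}^\vee$-type symmetry, valid here because $p_g(\PP^n)=0$ collapses the top strand) converts the upper bound into a lower-bound statement in a complementary strand, and the binomial $\binom{d+n-q}{n-q}$ is recognizable as $h^0$ of a twist of $\OO_{\PP^n}$ restricted to a complementary linear space $\PP^{n-q}$. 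Thus the two endpoints are "dual" to one another, and establishing one essentially establishes the other after applying the duality isomorphism and the vanishing $H^i(\PP^n,\OO)=0$ for $0<i<n$.

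The main obstacle, I expect, is \emph{sharpness of the endpoints} rather than mere nonvanishing in a window of the right order of magnitude: Theorem A already gives nonvanishing for $C_1 d^{q-1} \leq p \leq r_d - C_2 d^{n-q}$, and the binomial coefficients above have exactly those leading terms, so the qualitative content is in hand. The work is in pinning down the constants — i.e., proving that one can take the auxiliary variety to be a \emph{single} linear subspace (or a minimally-sized scroll) and that the resulting Koszul class genuinely survives all the way to the stated bound, with no loss from the lower-order binomial corrections $-\binom{d-1}{q}$, $+\binom{n}{n-q}$, and the additive constants $-q$, $-q-1$. This requires computing the syzygies of the auxiliary variety exactly in the relevant bidegree (for $\PP^q$ embedded by $\OO(d)$, this is where the Ottaviani--Paoletti computations and the known structure of Veronese Betti tables enter), and carefully analyzing the connecting maps in the long exact sequences relating $K_{\bullet,\bullet}$ of $\PP^n$ to $K_{\bullet,\bullet}$ of $\PP^q$ (via the ideal sheaf $\mathcal{I}_{\PP^q/\PP^n}$, whose resolution is a Koszul complex on $n-q$ linear forms). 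A secondary technical point is ensuring the relevant cohomology vanishing $H^i(\PP^n, \mathcal{I}_{\PP^q}(m)) = 0$ holds in the needed degree range so that the exact sequences do not introduce spurious kernels or cokernels; on $\PP^n$ this is elementary, which is why the effective statement is cleanest precisely in the Veronese case.
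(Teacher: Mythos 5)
Your high-level skeleton --- a secant/subvariety construction for the lower endpoint plus duality for the upper endpoint --- is the right one, but the mechanism you propose for the lower bound is not the one that works, and the two hardest steps are missing. The paper does not restrict Koszul cohomology to a linear $\PP^q\subset\PP^n$ and ``track generators''; it takes $Z\subset\PP^n$ to be a complete intersection of type $(1,\dots,1,q+1,d)$ --- equivalently, a complete intersection of type $(q+1,d)$ inside a linear $\PP^q$ --- chosen precisely so that $\HH{q-1}{\PP^n}{\II_{Z}(d)}\ne 0$ while $\HH{q-1}{\PP^n}{\II_{Z}(d+1)}=0$, and lets $\PP(W)$ be the linear span of $Z$. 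The quantity $\binom{d+q}{q}-\binom{d-1}{q}-1$ is exactly $\dim W$ computed from this complete intersection, and the syzygy is produced in $K_{w+1-q,q}$ via the map $\Lambda^{w}M\to\II_{Z/\PP^n}$. The genuine difficulty --- which your phrase ``one tracks how a generator appearing at the bottom of strand $q$ on $\PP^q$ propagates'' elides --- is proving that the induced map $\HH{q-1}{\PP^n}{\Lambda^{w}M\otimes\OO(d)}\to\HH{q-1}{\PP^n}{\II_{Z}(d)}$ is surjective; this is the content of Theorems \ref{First.Technical.Thm} and \ref{Second.Technical.Theorem} and is established by induction on dimension via hyperplane sections, with the base case handled by an Eagon--Northcott argument. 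Moreover, the secant construction by itself yields a class only for the single value $p=w+1-q$; to sweep out the whole interval one must enlarge the secant plane without changing its intersection with $X$ or its restriction to the hyperplane (Theorem \ref{Second.Technical.Theorem}), a step absent from your outline.

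Your identification of the upper endpoint as Serre-dual to a lower-bound statement is correct in spirit, but it cannot be closed with the untwisted statement alone: the dual of $K_{p,q}(\PP^n;\OO(d))$ is $K_{r_d-p-n,\,n-q}\big(\PP^n,\OO(d-n-1);\OO(d)\big)$, so the dual group carries the twist $b'=d-n-1$, which grows with $d$. One therefore needs the lower-bound nonvanishing for all twists $\OO(b)$ with $b$ comparable to $d$ --- this is why the paper proves Theorem \ref{Effective.Statement.Proj.Space} with an auxiliary $b$ throughout, and why the term $\binom{n}{n-q}=\binom{d-b'-1}{n-q}$ appears in the stated upper bound. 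Without the $b$-dependent version, the duality step does not produce the claimed endpoint.
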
 
\noi (Although we will not give all the calculations, one can use our arguments to show that in fact the assertion holds as soon as $d \ge n +1$.)
When $q = n$,  the Koszul cohomology group in question vanishes if $p$ lies outside the stated range. We conjecture that the inequality is actually optimal for every  $q \in [1,n]$ and $d \ge q+1$.\footnote{We have been informed by J. Weyman that he independently obtained  the non-vanishing of these Veronese syzygies in characteristic $0$, and   likewise conjectures the optimality of the statement.}

In characteristic zero, Theorem \ref{Veronese.Syzygies.Intro} implies a rather clean   non-vanishing statement for the decomposition of Veronese syzygies into irreducible representations of the general linear group. Specifically, denote by $\mathbf{K}_{p,q}(d)$ the functor  that associates to a complex vector space $U$ the group\[  K_{p,q}\big(\PP(U), \OO_{\PP(U)}(d)\big).\] As in \cite{Rubei} or \cite{Snowden} one can write
\[
 \mathbf{K}_{p,q}(d) \ = \ \bigoplus_{\lambda \, \vdash \, (p+q)d} M_{\lambda}  \otimes_{\CC} \mathbf{S}_\lambda,  
\]
where $\mathbf{S}_{\lambda}$ is the Schur functor associated to the partition $\lambda$, and $M_{\lambda}= M_{\lambda}(p,q;d)$ is a finite-dimensional vector space governing the multiplicity of $\bf{S}_\lambda$ in $ \mathbf{K}_{p,q}(d)$.  It was established by Rubei \cite{Rubei} that once $n +1 = \dim U\ge p+1$, the vanishing or non-vanishing of $\mathbf{K}_{p,q}(d)(U)$ is independent of $U$. One can view the theorem   of Green cited above as asserting that  if $q \ge 2$ then $\mathbf{K}_{p,q}(d) = \mathbf{0}$ for   $p \le d$. 
\begin{corollaryalpha} Fix $q \ge 1$ and  $d \ge q+1$, and 
assume that 
\[
p \ \ge \ \binom{d+q}{q} \, - \, \binom{d-1}{q} \, - \, q.
\]
Then $\mathbf{K}_{p,q}(d) \ne \mathbf{0}$.\end{corollaryalpha}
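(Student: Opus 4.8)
\emph{Proof proposal.}
To say that $\mathbf{K}_{p,q}(d)\ne\mathbf{0}$ is to say that $\mathbf{K}_{p,q}(d)(U)\ne 0$ for some vector space $U$ of large dimension, and by Rubei's theorem this is equivalent to the non-vanishing of $\mathbf{K}_{p,q}(d)(U)$ for every $U$ with $\dim U\ge p+1$. It therefore suffices to exhibit a single integer $n\ge p$ with
\[
K_{p,q}\big(\PP^n;\OO_{\PP^n}(d)\big)\ \ne\ 0 ,
\]
and the plan is to extract this from Theorem \ref{Veronese.Syzygies.Intro}, applied in a suitably large dimension $n$.

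Put
\[
p_0\ =\ \binom{d+q}{q}-\binom{d-1}{q}-q,\qquad
U(n)\ =\ \binom{d+n}{n}-\binom{d+n-q}{n-q}+\binom{n}{n-q}-q-1 ,
\]
so that the hypothesis of the Corollary reads $p\ge p_0$, while Theorem \ref{Veronese.Syzygies.Intro} produces $K_{p,q}(\PP^n;\OO_{\PP^n}(d))\ne0$ whenever $q\le n$ and $p_0\le p\le U(n)$. The first step is the elementary remark that, for $n\ge q$, $U(n)$ is a polynomial in $n$ of degree $d-1$ with positive leading coefficient. Indeed, writing $P(m)=\binom{m+d}{d}$, one has $\binom{d+n}{n}-\binom{d+n-q}{n-q}=P(n)-P(n-q)$, a difference of two degree-$d$ polynomials with equal leading terms and hence of degree $d-1$ with positive leading coefficient; and $\binom{n}{n-q}=\binom nq$ contributes only in degrees $\le q\le d-1$. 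Here the hypothesis $d\ge q+1$ enters, guaranteeing $\deg U(n)=d-1\ge 1$ and thus $U(n)\to\infty$ as $n\to\infty$. Consequently, given $p\ge p_0$ we may fix an integer $n$ with $n\ge\max\{q,p\}$ and $U(n)\ge p$; for this $n$ the number $p$ lies in $[p_0,U(n)]$, and $n\ge p$ as needed above.

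It remains to invoke Theorem \ref{Veronese.Syzygies.Intro} in dimension $n$, and this is the step that needs care --- indeed where I expect the real work to sit. In our situation $d$ is fixed at a value $\ge q+1$ while $n$ has been taken large, so the explicit sufficient condition ``$d\ge n+1$'' recorded just after Theorem \ref{Veronese.Syzygies.Intro} is not available. What one actually uses is the form of that theorem near the \emph{lower} end of its range: the non-vanishing of $K_{p,q}(\PP^n;\OO_{\PP^n}(d))$ for $p$ close to $p_0$ holds already under the single hypothesis $d\ge q+1$, with no constraint relating $d$ and $n$. This is consistent with --- and, in the proof of Theorem \ref{Veronese.Syzygies.Intro}, is precisely the source of --- the fact that the lower bound $p_0$ does not depend on $n$: the syzygies that witness non-vanishing at the bottom of the range are manufactured from a fixed low-dimensional model sitting linearly inside $\PP^n$ and are insensitive to the ambient dimension, whereas the condition $d\ge n+1$ is needed only to push the non-vanishing up to the top of the interval near $r_d$. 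Since in our application $p$ is a fixed integer while $U(n)\to\infty$, the chosen $p$ does lie in this stable lower portion of the range once $n$ is large, so Theorem \ref{Veronese.Syzygies.Intro} gives $K_{p,q}(\PP^n;\OO_{\PP^n}(d))\ne0$ and therefore $\mathbf{K}_{p,q}(d)\ne\mathbf{0}$. The only genuine obstacle is thus to isolate, from the proof of Theorem \ref{Veronese.Syzygies.Intro}, this uniform-in-$n$ non-vanishing at the bottom of the range; everything else is formal.
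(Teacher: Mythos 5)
Your proposal is correct and follows essentially the same route as the paper: take $n$ large so that $p$ falls under the (degree $d-1$, hence unbounded) upper bound, and observe that the non-vanishing at the lower end of the range must hold uniformly in $n$ under the sole hypothesis $d\ge q+1$. The "genuine obstacle" you flag at the end is exactly how the paper handles it — it does not apply Theorem \ref{Effective.Statement.Proj.Space} directly (which is only proved for $d\gg 0$, effectively $d\ge b+n+1$), but instead invokes Propositions \ref{Weaker.Non.Van.Veronese.Syz} and \ref{Kp1.Veronese.Prop}, which give precisely the lower bound $p_0$ together with an upper bound tending to infinity with $n$, valid for $d\ge q+1$ with no constraint between $d$ and $n$.
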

\noi In other words, if $p$ satisfies the stated inequality then   some $\bf{S}_\lambda$ appears non-trivially in $\mathbf{K}_{p,q}(d)$. Conversely,   it seems possible as above that one might expect  $\mathbf{K}_{p,q}(d) = \mathbf{0}$ when $p < \binom{d+q}{q} - \binom{d-1}{q} - q$: for $q = 2$ this is (part of) a conjecture of Ottaviani and Paoletti \cite{OP}. 
 
Returning to an arbitrary smooth projective variety $X$, fix a divisor $B$ on $X$ and consider
\[  
R ( B; L_d) \ = \ \oplus \, \HH{0}{X}{\OO_X(B + mL_d)}.
\] 
This is a graded module over $S = \Sym \, \HH{0}{X}{\OO_X(L_d)}$, and one can ask about its syzygies $K_{p,q}\big(X, B; L_d\big)$. We work with this set-up in the body of the paper.   For a given $B$ one finds an asymptotic picture similar to the case $B = 0$ summarized in the preceding paragraphs.

Our proofs revolve around secant constructions. Let $\Lambda = \Lambda_d \subseteq \PP^{r_d}$ be a linear subspace of dimension $ s_d$  that meets $X$ in a scheme $ Z_d$.  Under a mild hypothesis that holds automatically when $d \gg 0$, such a plane  gives rise to a homomorphism
\[   \alpha_{\Lambda} \, : \, K_{s_d +2 -q,q}\big(L_d\big) \lra H^{q-1}\big(X, \II_{Z_d/X}(L_d)\big) ; \]
  the strategy of course is to choose $\Lambda$ in such a way that
\[ \HH{q-1}{X}{\II_{Z_d/X}( L_d)} \ne 0.  
\]  
In the case of curves, the fact that highly secant planes carry syzygies goes back  to \cite{GL1} and \cite{GL2}, and in \cite{EGHP} special secants of the sort appearing in our arguments were used to study the failure of Property ($N_p$) on surfaces. The essential issue here -- which was automatic in the earlier applications --  is to show that the homomorphism $\alpha_{\Lambda}$ is non-zero. Our main technical result sets up an induction on dimension by establishing that it suffices to check the corresponding statement for the embeddings of a fixed hypersurface $\ol{X} \subset X$. A priori, this yields a   $p^{\text{th}}$ syzygy for one specific value of $p$ (viz.\ $p = s_d + 2 - q$). However we can  enlarge the secant plane without changing how it meets the image of $\ol{X}$, and in this way we are able to produce non-zero classes in $K_{p,q}$ for a large range of $p$. 
 
Concerning the organization of the paper, after a quick review of the basics of Koszul cohomology, Section \ref{Secant.Constructions.Section} takes up secant constructions: the main technical results (Theorems \ref{First.Technical.Thm}
 and \ref{Second.Technical.Theorem}) appear there. The asymptotic non-vanishing theorem is established in \S \ref{Asympt.Nonvan.Section}, and completed in \S \ref{Kp1.Section} where the groups $K_{p,0}$ and $K_{p,1}$ are discussed. 
Section \ref{Veronese.Vars.Section} is devoted to  effective statements for Veronese embeddings of projective space.  Finally, in \S \ref{Conjs.Open.Problems.Section} we discuss some conjectures and open problems. There seems to be a lot left to learn about the syzygies of higher-dimensional varieties, and we hope that these questions may stimulate further work in this direction.
 
 We are grateful to D. Eisenbud, D. Erman, M. Fulger, W. Fulton, M. Hering, K. Lee, M. Musta\c t\v a, F. Schreyer, J. Sidman, V. Srinivas, and X. Zhou for helpful comments and suggestions. We are especially indebted  to Andrew Snowden for many valuable discussions, particularly concerning
 the material in \S \ref{Veronese.Vars.Section}.
 
%
%
 \section{Notation and Conventions}
 
\begin{NC}
We work throughout over an algebraically closed field $k$. A \textit{variety} is reduced and irreducible.\footnote{The non-vanishing of syzygies  is unaffected by extension of the base field. Therefore all of our results remain valid for a geometrically  irreducible non-singular  variety defined over an arbitrary field.}
\end{NC}

\begin{NC}
If $X$ is a variety, and $V$ is a finite-dimensional $k$-vector space, we write \[ V_X \ = \ V \otimes_k \OO_X
\]
for the trivial vector bundle on $X$ modeled on $V$. We denote by $\PP(V)$ the projective space of one-dimensional \textit{quotients} of $V$. 
\end{NC}

\begin{NC}
A \textit{divisor} on a variety $X$ is a Cartier divisor. As a compromise between the languages of line bundles and divisors,  we often use notation suggestive of bundles to refer to divisors. Given a divisor $L$ on $X$, we try to write \[ \HH{i}{X}{\OO_X(L)}\] to denote the cohomology groups of the corresponding locally free sheaf. However in the interests of compactness we sometimes use the shorthand $\HH{i}{X}{L}$ or $H^i(L)$ instead.
\end{NC}
 
 \begin{NC}
 When $X$ is smooth, $K_X$ denotes as usual a canonical divisor on $X$. 
 \end{NC}
 
 \begin{NC}
 Given a non-negative function $f(d)$ defined for sufficiently large positive integers $d$, we say that $f(d) = O(d^k)$ if
 \[
 \liminf \ \frac{f(d)}{d^k} \ \  \text{and} \ \ \limsup\  \frac{f(d)}{d^k }
 \]
 are finite and non-zero.
 \end{NC}

%
%
 
\section{Secant Constructions} \label{Secant.Constructions.Section}

Throughout this section, $X$ denotes an irreducible projective variety of dimension $n$, and $L$ is a very ample divisor on $X$. We fix once and for all a basepoint-free subspace $ V   \subseteq \HH{0}{X}{\OO_X(L)} $ of dimension $v$ that defines an embedding        
\[   X \ \subseteq \ \PP(V) 
\, = \, \PP^{v-1}. \]
In the sequel we will be mainly concerned with the situation where $V = \HH{0}{X}{\OO_X(L)}$ is the complete linear series determined by $L$, but in the interests of clarity we prefer not to limit ourselves to that case here. 

\subsection*{Review of Koszul Cohomology} We wish to study various syzygy modules associated to  $X$ in $\PP(V)$. Specifically, let 
$S = \Sym(V)$ be the homogeneous coordinate ring of $\PP(V)$. Next, fix a divisor $B$ on $X$, and put
\[ 
R \ = \ R(B;L) \ =_\text{def} \ \oplus_m \,  \HH{0}{X}{\OO_X(mL + B) }.
\]
This is naturally a graded $S$-module, and we  denote by 
\[E_\bullet \ = \  E_\bullet (X, B; V) \]
the minimal graded free resolution of $R$ over  $S$. We will often use some hopefully self-explanatory perturbations and abbreviations of this notation. Notably, when  $V$ is the complete space of sections $V = \HH{0}{X}{\OO_X(L)}$  of $L$ we write $E_\bullet(X,B;L)$, and when in addition $B = 0$ we put simply $E_\bullet(X;L)$. 

We recall two basic and well-known facts about these syzygy modules. The first is that the graded pieces of $E_\bullet$ are computed as Koszul cohomology groups. 
\begin{proposition.definition} Let 
$
K_{p,q}(X, B;V) $
denote the cohomology of the Koszul-type complex
\begin{equation} \label{KpqDef}
 \Lambda^{p+1} V \otimes \Hh{0}{B+ (q-1)L} \lra \Lambda^p V \otimes  \Hh{0}{ B + qL} \lra \Lambda^{p-1} V\otimes \Hh{0}{ B + (q+1)L}. \notag \end{equation}
Then
\[
K_{p,q}(X, B; V) \ = \ \Tor_p^S \big(R(B;L),S/S_+\big)_{p+q},\]
where $S_+ \subseteq  S$ denotes the irrelevant maximal ideal. In particular
\[  E_p(X,B;V) \ = \  \underset{q}{\bigoplus}   \ K_{p,q}(X,B;V) \, \otimes _k \,  S(-p-q). \]
\end{proposition.definition}
\noi We refer for example to \cite{Kosz1} or \cite{VBT} for the proof. As above, we will write $K_{p,q}(X, B;L)$ when $V = \HH{0}{X}{\OO_X(L)}$, and we omit mention of $B$ in the event that $B = 0$. For lack of a better term, we will refer to a class in $K_{p,q}(X, B;L)$ as a \textit{$p^{\text{th}}$ syzygy of weight $q$}. 

The second important fact for us is that  these Koszul cohomology groups are governed by the coherent cohomology of a vector bundle on $X$. Specifically, there is a natural evaluation map 
\[  \ev_V : V_X \, =_{\text{def}} \, V \otimes_k \OO_X \lra\OO_X(L),  \]
and we put $M_V = \ker e_V$. Thus $M_V$ is a vector bundle of rank $v-1$ sitting in the basic exact sequence
\begin{equation} \label{Seq.Def.M_V}
0 \lra M_V \lra V_X \lra \OO_X(L)\lra 0. 
\end{equation} 
When $V = \HH{0}{X}{\OO_X(L)}$ is the complete linear series associated to $L$, we write $M_L$ in place of $M_V$. 
\begin{proposition} \label{Non.Van.Wedge.p+1.Gives.Koszul}
Assume that
\begin{equation} \label{Van.Hypoth.Kosz.from.M}
\HH{i}{X}{\OO_X(B + mL)} \ = \ 0  \ \ \text{for } \ i > 0 \ \text{ and } \ m > 0. \end{equation}
Then for $q \ge 2$:
\[  K_{p,q}(X,B;V) \ = \ \HHH{1}{X}{\Lambda^{p+1} M_V \otimes \OO_X(B + (q-1)L)}. \]
If moreover $\HH{1}{X}{\OO_X(B)} = 0$, then the same statement holds also when $q = 1$. 
 \end{proposition}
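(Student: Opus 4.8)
The plan is to compute $K_{p,q}(X,B;V)$ by chasing the Koszul-type complex in the Proposition/Definition through the exact powers of the sequence \eqref{Seq.Def.M_V}. The basic observation is that tensoring \eqref{Seq.Def.M_V} with $\OO_X(B+(q-1)L)$ and taking exterior powers gives, for each $p\ge 0$, a short exact sequence
\[
0 \lra \Lambda^{p+1} M_V \otimes \OO_X(B+(q-1)L) \lra \Lambda^{p+1} V_X \otimes \OO_X(B+(q-1)L) \lra \Lambda^p M_V \otimes \OO_X(B+qL) \lra 0,
\]
using that $\Lambda^{p+1} V_X / \Lambda^{p+1}M_V \cong \Lambda^p M_V \otimes \OO_X(L)$ as usual. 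Taking global sections and splicing these sequences together for varying $p$, one sees that the Koszul complex computing $K_{p,q}(X,B;V)$ is exactly the complex of global sections obtained from the exact Koszul complex of bundles
\[
0 \lra \Lambda^{p+1}M_V \otimes \OO_X(B+(q-1)L) \lra \Lambda^{p+1}V_X \otimes \OO_X(B+(q-1)L) \lra \Lambda^p V_X \otimes \OO_X(B+qL) \lra \cdots
\]
truncated appropriately. So the cohomology of the Koszul complex at the relevant spot should be identifiable with $H^1$ of the kernel bundle $\Lambda^{p+1}M_V\otimes\OO_X(B+(q-1)L)$, provided the higher cohomology of the middle terms $\Lambda^j V_X \otimes \OO_X(B+mL) = \Lambda^j V \otimes \HH{0}{X}{\OO_X(B+mL)}$ vanishes — and that is precisely what hypothesis \eqref{Van.Hypoth.Kosz.from.M} guarantees for $m>0$ (and, in the $q=1$ case, what the extra hypothesis $\HH{1}{X}{\OO_X(B)}=0$ handles for the boundary term where $m=0$).

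Concretely, I would break the long exact cohomology sequence into the three-term window around position $p$. From the short exact sequence displayed above, the connecting maps fit into
\[
\HH{0}{X}{\Lambda^{p+1}V_X\otimes\OO_X(B+(q-1)L)} \lra \HH{0}{X}{\Lambda^p M_V\otimes\OO_X(B+qL)} \lra \HH{1}{X}{\Lambda^{p+1}M_V\otimes\OO_X(B+(q-1)L)} \lra \HH{1}{X}{\Lambda^{p+1}V_X\otimes\OO_X(B+(q-1)L)}.
\]
The last group is $\Lambda^{p+1}V\otimes \HH{1}{X}{\OO_X(B+(q-1)L)}$, which vanishes when $q\ge 2$ by \eqref{Van.Hypoth.Kosz.from.M} (since then $(q-1)L$ has $m=q-1>0$) or when $q=1$ by the supplementary hypothesis. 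Hence $\HH{1}{X}{\Lambda^{p+1}M_V\otimes\OO_X(B+(q-1)L)}$ is the cokernel of the map $\HH{0}{X}{\Lambda^{p+1}V_X\otimes\OO_X(B+(q-1)L)} \to \HH{0}{X}{\Lambda^p M_V\otimes\OO_X(B+qL)}$. On the other hand, using the next short exact sequence (the one expressing $\Lambda^p M_V \otimes \OO_X(B+qL)$ as the kernel of $\Lambda^p V_X\otimes\OO_X(B+qL)\to \Lambda^{p-1}M_V\otimes\OO_X(B+(q+1)L)$) together with the vanishing of $\HH{1}{X}{\OO_X(B+qL)}$, one identifies $\HH{0}{X}{\Lambda^p M_V\otimes\OO_X(B+qL)}$ as the kernel of $\Lambda^p V\otimes\HH{0}{X}{\OO_X(B+qL)}\to \Lambda^{p-1}V\otimes\HH{0}{X}{\OO_X(B+(q+1)L)}$. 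Matching these two identifications shows that $\HH{1}{X}{\Lambda^{p+1}M_V\otimes\OO_X(B+(q-1)L)}$ is exactly the homology of the three-term complex $\Lambda^{p+1}V\otimes\HH{0}{X}{B+(q-1)L}\to\Lambda^p V\otimes\HH{0}{X}{B+qL}\to\Lambda^{p-1}V\otimes\HH{0}{X}{B+(q+1)L}$, i.e. $K_{p,q}(X,B;V)$.

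The only place requiring genuine care — and hence the main obstacle — is the bookkeeping at the boundary of the range of $q$, namely the case $q=1$. For $q\ge 2$ every middle term $\Lambda^j V_X\otimes\OO_X(B+mL)$ appearing has $m\ge 1$, so \eqref{Van.Hypoth.Kosz.from.M} kills all the relevant $H^{>0}$ and the splicing goes through mechanically. For $q=1$ the leftmost term of the Koszul complex involves $\HH{0}{X}{\OO_X(B)}$ and the relevant connecting map lands in $\HH{1}{X}{\OO_X(B)}$; one needs that group to vanish in order to run the same identification, which is exactly the stated extra hypothesis. I would also double-check the base of the induction/splicing — i.e. that for large $p$ the higher exterior powers $\Lambda^{p+1}M_V$ vanish (since $\rk M_V = v-1$) so the long complex of bundles is genuinely finite and exact — and note that the degenerate cases (e.g. $p$ near $v-1$, or $p=0$) are covered by the same exact-sequence argument with one or two terms zero. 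No deeper input than \eqref{Van.Hypoth.Kosz.from.M}, the hypothesis $\HH{1}{X}{\OO_X(B)}=0$, and the exactness of the Koszul complex of \eqref{Seq.Def.M_V} is needed.
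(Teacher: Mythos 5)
Your proposal is correct and follows essentially the same route as the paper's own (sketched) proof: splice together the twisted exterior-power sequences arising from \eqref{Seq.Def.M_V} and chase cohomology, using \eqref{Van.Hypoth.Kosz.from.M} (resp.\ $\HH{1}{X}{\OO_X(B)}=0$ when $q=1$) to kill the $H^1$ of the trivial middle terms. You in fact supply more detail than the paper, which defers the diagram chase to \cite[\S1]{SAD} and \cite[Theorem 5.8]{Eisenbud}; the only superfluous step is invoking the vanishing of $\HH{1}{X}{\OO_X(B+qL)}$ to identify $\HH{0}{X}{\Lambda^p M_V\otimes\OO_X(B+qL)}$ with the Koszul cycles, which already follows from left-exactness of global sections.
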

\begin{proof}[Brief Sketch of Proof]
Starting with the Koszul complex determined by the evaluation  map $V_X \lra \OO(L)$, and twisting by $\OO_X(B + qL)$, one arrives at a complex $\mathcal{K}_\bullet$   of sheaves whose space of sections computes $K_{p,q}(X,B;V)$. On the other hand,  $\mathcal{K}_\bullet$ is obtained by splicing together twists of the exact sequences 
\begin{equation} \label{Wedge.of.M} 
0 \lra \Lambda^{p+1} M_V \lra \Lambda^{p+1} V_X \lra \Lambda^{p} M_V \otimes \OO_X(L) \lra 0. 
\end{equation}
resulting from  \eqref{Seq.Def.M_V}.
The assertion then follows by chasing through the resulting diagram. See \cite[\S1]{SAD} or  \cite[Theorem 5.8]{Eisenbud}  for more details. \end{proof}

\begin{corollary} \label{Higher.Cohom.Interpr.Kpq}
Assume that the vanishings \eqref{Van.Hypoth.Kosz.from.M} are satisfied. Then for $q \ge 2$:
\begin{equation} K_{p,q}(X,B;V) \ = \ \HH{q-1}{X}{\Lambda^{p+q-1} M_V \otimes \OO_X(B + L)}. \end{equation}
 In particular \[
 K_{p,n+1}(X,B;V) \ = \  \HH{n}{X}{\Lambda^{p+n} M_V \otimes \OO_X(B + L)},
 \] and
   $K_{p,q}(X,B;V) = 0$ if $q > n+1$. 
\end{corollary}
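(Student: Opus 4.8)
The plan is to deduce Corollary \ref{Higher.Cohom.Interpr.Kpq} from Proposition \ref{Non.Van.Wedge.p+1.Gives.Koszul} by a standard cohomology-shifting argument along the Koszul filtration of $\Lambda^{p+1}M_V$. The point is that Proposition \ref{Non.Van.Wedge.p+1.Gives.Koszul} identifies $K_{p,q}$ with an $H^1$, and we want to re-express that $H^1$ as an $H^{q-1}$ of a higher exterior power of $M_V$; the mechanism is exactly the exact sequences \eqref{Wedge.of.M} that already appear in the sketch, combined with the vanishing of the intermediate cohomology of the trivial bundles $\Lambda^j V_X$ twisted by $\OO_X(B+mL)$ for $m>0$, which is hypothesis \eqref{Van.Hypoth.Kosz.from.M}.

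First I would fix $q\ge 2$ and apply Proposition \ref{Non.Van.Wedge.p+1.Gives.Koszul} to get
\[
K_{p,q}(X,B;V) \ = \ \HH{1}{X}{\Lambda^{p+1}M_V\otimes\OO_X(B+(q-1)L)}.
\]
Then I would twist the exact sequence \eqref{Wedge.of.M}, with $p+1$ replaced by $p+j$, by $\OO_X(B+(q-j)L)$ for $j=1,\dots,q-1$, obtaining short exact sequences
\[
0 \lra \Lambda^{p+j+1}M_V\otimes\OO_X(B+(q-j-1)L) \lra \Lambda^{p+j+1}V_X\otimes\OO_X(B+(q-j-1)L) \lra \Lambda^{p+j}M_V\otimes\OO_X(B+(q-j)L)\lra 0.
\]
Taking the long exact sequence in cohomology and using that $\HH{i}{X}{\Lambda^{p+j+1}V_X\otimes\OO_X(B+(q-j-1)L)}$ is a direct sum of copies of $\HH{i}{X}{\OO_X(B+(q-j-1)L)}$, which vanishes for all $i>0$ by \eqref{Van.Hypoth.Kosz.from.M} as long as $q-j-1>0$, i.e.\ $j<q-1$, gives the isomorphisms
\[
\HH{i}{X}{\Lambda^{p+j}M_V\otimes\OO_X(B+(q-j)L)} \ \cong \ \HH{i+1}{X}{\Lambda^{p+j+1}M_V\otimes\OO_X(B+(q-j-1)L)}
\]
for $1\le i$ and $1\le j\le q-2$. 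Chaining these from $j=1$ up to $j=q-2$ (so $i$ increases from $1$ to $q-1$) converts the $H^1$ above into
\[
\HH{q-1}{X}{\Lambda^{p+q-1}M_V\otimes\OO_X(B+L)},
\]
which is the displayed formula. The last step $j=q-1$ is not used for an isomorphism precisely because the twist on the trivial bundle becomes $\OO_X(B)$, whose higher cohomology is not assumed to vanish; that is why the induction stops at $\Lambda^{p+q-1}M_V\otimes\OO_X(B+L)$ rather than going one step further. Setting $q=n+1$ gives the stated description of $K_{p,n+1}$. For the final vanishing $K_{p,q}=0$ when $q>n+1$, the same chain of isomorphisms expresses $K_{p,q}$ as $\HH{q-1}{X}{\Lambda^{p+q-1}M_V\otimes\OO_X(B+L)}$, and this group vanishes because $q-1>n=\dim X$ forces all cohomology above degree $n$ to be zero by Grothendieck vanishing.

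The only mildly delicate point — and the step I would be most careful about — is bookkeeping the range of $j$ for which \eqref{Van.Hypoth.Kosz.from.M} actually applies, so that the induction terminates at the correct exterior power and twist; the argument silently requires $q-j-1\ge 1$ at each stage where we kill the cohomology of the trivial bundle, and one must check that the $q>n+1$ case still goes through (it does, since we may keep shifting until the trivial-bundle twist would be $\OO_X(B)$, but by then $q-1>n$ and the target $H^{q-1}$ already vanishes, so in fact no issue arises). Everything else is formal diagram chasing on the splicing of \eqref{Wedge.of.M}, already indicated in the proof of Proposition \ref{Non.Van.Wedge.p+1.Gives.Koszul}.
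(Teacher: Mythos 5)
Your proposal is correct and follows essentially the same route as the paper: the paper's proof also chains the twisted sequences \eqref{Wedge.of.M} to obtain $\HH{1}{X}{\Lambda^{p+1}M_V\otimes\OO_X(B+(q-1)L)} = \cdots = \HH{q-1}{X}{\Lambda^{p+q-1}M_V\otimes\OO_X(B+L)}$, and then invokes the vanishing of $H^{q-1}$ for $q-1>n=\dim X$. Your careful bookkeeping of the range of $j$ where \eqref{Van.Hypoth.Kosz.from.M} applies is exactly the point the paper leaves implicit.
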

\begin{proof} In fact, one finds from  \eqref{Wedge.of.M} that
\begin{align*}
\HH{1}{X}{\Lambda^{p+1} M_V \otimes \OO_X(B + (q-1)L)} \ &= \ \HH{2}{X}{\Lambda^{p+2} M_V \otimes \OO_X(B + (q-2)L)}\\ & = \ \ldots \\ &= \ \HH{q-1}{X}{\Lambda^{p+q-1} M_V \otimes \OO_X(B +L)}.
\end{align*} 
 If $q -1 > n = \dim X$, then of course this $H^{q-1}$ vanishes, yielding the last assertion. \end{proof}

\begin{remark}[$K_{p,0}$] \label{Kp0. Lemma} Assume that 
\begin{equation} \label{Low.Twists.Vanish}
\HH{0}{X}{\OO_X(B - jL)} \ = \ 0 \ \ \text{for} \  \ j > 0. \end{equation}  
Then  
\begin{equation}  \label{Kp0. Statement}
K_{p,0} (X,B:V) \ = \ \HH{0}{X}{\Lambda^pM_V \otimes \OO_X(B)}, 
\end{equation}
and $K_{p, q}(X, B;V) = 0 $ for $q < 0$. Indeed,  the right-hand side of \eqref{Kp0. Statement} computes quite generally the space $Z_{p,0}(X, B;V)$ of Koszul cycles,
and the hypothesis implies that there are no boundaries in that degree. 
\end{remark}

Finally, we recall a useful duality theorem stated by Green \cite[Theorem 2.c.1]{Kosz1}.
\begin{proposition} \label{Duality}
Assume that $X$ is smooth, that $B$ and $L$ satisfy \eqref{Van.Hypoth.Kosz.from.M} and \eqref{Low.Twists.Vanish}, and that in addition
\[
\HH{i}{X}{\OO_X(B + mL) } \ = \ 0 \ \ \text{for}\ \ 0 \, < \, i \, < \, n \  \text{and all }     \ m   \in  \ZZ.  \tag{*} \] Then for $0 \le q \le n+1$ one has isomorphisms:
\begin{align*}
K_{p,q}\big(X,B;V\big)\ &= \ K_{r-p-n,n+1-q}\big(X, B^\pr; V\big)^* \\ &= \ K_{r-p-n,n-q}\big(X, B^\pr + L; V\big)^*,
\end{align*}
where $B^\pr = K_X - B$ and $r = v - 1$. 
\end{proposition}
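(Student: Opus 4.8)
The plan is to deduce this from the standard duality for Koszul cohomology together with Serre duality on $X$. The key point is that under the hypotheses \eqref{Van.Hypoth.Kosz.from.M}, \eqref{Low.Twists.Vanish}, and (*), all the intermediate cohomology is controlled, so the Koszul cohomology groups $K_{p,q}(X,B;V)$ can be computed via a single vector bundle cohomology group on $X$, and Serre duality on that bundle will match the desired pairing. Concretely, first I would use Corollary \ref{Higher.Cohom.Interpr.Kpq} (and Remark \ref{Kp0. Lemma}, and Proposition \ref{Non.Van.Wedge.p+1.Gives.Koszul} for the edge cases $q=0,1,n+1$) to reduce both sides of the claimed isomorphisms to coherent cohomology of twists of $\Lambda^\bullet M_V$. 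One gets, for $1 \le q \le n$,
\[
K_{p,q}(X,B;V) \ = \ \HH{q-1}{X}{\Lambda^{p+q-1} M_V \otimes \OO_X(B+L)},
\]
and one wants to identify this with the $k$-dual of
\[
K_{r-p-n,\,n+1-q}(X,B^\pr;V) \ = \ \HH{n-q}{X}{\Lambda^{(r-p-n)+(n+1-q)-1} M_V \otimes \OO_X(B^\pr+L)}.
\]

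Next I would do the bookkeeping on the wedge-power indices. Since $M_V$ has rank $v-1 = r$, there is a perfect pairing $\Lambda^{a} M_V \otimes \Lambda^{r-a} M_V \to \Lambda^r M_V = \det M_V$, and from \eqref{Seq.Def.M_V} one computes $\det M_V = \OO_X(-L)$. Taking $a = p+q-1$ gives $r - a = r - p - q + 1 = (r-p-n) + (n+1-q) - 1$, which is exactly the wedge index appearing on the dual side — so the indices match up on the nose. Therefore $\Lambda^{(r-p-n)+(n+1-q)-1} M_V \cong \big(\Lambda^{p+q-1} M_V\big)^\vee \otimes \OO_X(-L)$, and the cohomology group computing $K_{r-p-n,n+1-q}(X,B^\pr;V)$ becomes
\[
\HH{n-q}{X}{\big(\Lambda^{p+q-1} M_V\big)^\vee \otimes \OO_X(K_X - B - L)}.
\]
Since $X$ is smooth and projective of dimension $n$, Serre duality identifies this with the dual of $\HH{q}{X}{\Lambda^{p+q-1}M_V \otimes \OO_X(B+L)}$ — wait, that is $H^q$, not $H^{q-1}$. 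The resolution of this apparent off-by-one is exactly where the hypothesis (*) enters: the splicing exact sequences \eqref{Wedge.of.M}, combined with the vanishing of $\HH{i}{X}{\OO_X(B+mL)}$ for $0 < i < n$ and all $m$, force $\HH{q}{X}{\Lambda^{p+q-1}M_V \otimes \OO_X(B+L)} = \HH{q-1}{X}{\Lambda^{p+q-2}M_V \otimes \OO_X(B)}$ after unwinding, and then again all the way down; the upshot is that the index shifts are the same ones already carried out in the proof of Corollary \ref{Higher.Cohom.Interpr.Kpq}, now run in the "other direction." So the clean way to organize this is: establish the duality purely on the level of Koszul complexes first, observing that the Koszul complex computing $K_{\bullet,\bullet}(X,B;V)$ is, under (*), exact except at the relevant spots, and that its $k$-linear dual is, up to the identification $\Lambda^a M_V \otimes \det M_V^\vee \cong \Lambda^{r-a}M_V^\vee$ and Serre duality on $X$, the Koszul complex computing $K_{\bullet,\bullet}(X,B^\pr;V)$ with indices reflected. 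This is precisely the content of Green's Theorem 2.c.1, so I would cite \cite[Theorem 2.c.1]{Kosz1} for the main isomorphism and only spell out the verification that our hypotheses \eqref{Van.Hypoth.Kosz.from.M}, \eqref{Low.Twists.Vanish}, (*) imply Green's. Finally, the second displayed isomorphism, $K_{r-p-n,n+1-q}(X,B^\pr;V) = K_{r-p-n,n-q}(X,B^\pr+L;V)$, is just the index-shift identity of Corollary \ref{Higher.Cohom.Interpr.Kpq} applied with $B$ replaced by $B^\pr$ and $q$ by $n+1-q$ (so $q-1 \mapsto n-q$): one checks $B^\pr$ and $L$ still satisfy the relevant vanishing, and that replacing $B^\pr$ by $B^\pr + L$ absorbs one unit of the twist, decrementing the cohomological degree by one.

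\textbf{Main obstacle.} The routine part is the index arithmetic on wedge powers and twists; the part that needs care is the edge cases $q = 0, 1, n, n+1$, where Proposition \ref{Non.Van.Wedge.p+1.Gives.Koszul} and Corollary \ref{Higher.Cohom.Interpr.Kpq} have slightly different hypotheses (e.g. $\HH{1}{X}{\OO_X(B)} = 0$ is needed at $q=1$, and dually at $q = n$), so one must check that the blanket hypotheses \eqref{Van.Hypoth.Kosz.from.M}, \eqref{Low.Twists.Vanish}, (*) — applied to \emph{both} $B$ and $B^\pr = K_X - B$ — cover all of these simultaneously. In practice this is why the statement is most cleanly obtained by invoking Green's duality theorem directly rather than re-deriving it; the real work is the dictionary check that our normalization of hypotheses implies his.
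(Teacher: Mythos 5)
Your overall route is the same as the paper's (which itself only sketches the argument and defers to Green's Theorem 2.c.1): reduce both sides to cohomology of twists of $\Lambda^\bullet M_V$, use $\det M_V = \OO_X(-L)$ to identify $\Lambda^a M_V^*$ with $\Lambda^{r-a}M_V \otimes \OO_X(L)$, apply Serre duality, and use (*) together with the spliced sequences \eqref{Wedge.of.M} to shift cohomological degrees. So the plan is sound. But there is an arithmetic slip at the step you describe as matching "on the nose," and it is worth fixing because it is exactly where the content of the proposition sits. The wedge index on the dual side is $(r-p-n)+(n+1-q)-1 = r-p-q$, whereas $r-(p+q-1) = r-p-q+1$; these differ by one, so the pairing $\Lambda^{p+q-1}M_V \otimes \Lambda^{r-p-q+1}M_V \to \det M_V$ does \emph{not} directly connect the two groups. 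The correct bookkeeping is
\[
K_{r-p-n,\,n+1-q}(X,B';V) \;=\; \HH{n-q}{X}{\Lambda^{r-p-q}M_V \otimes \OO_X(B'+L)} \;=\; \HH{n-q}{X}{(\Lambda^{p+q}M_V)^\vee \otimes \OO_X(K_X-B)},
\]
whose Serre dual is $\HH{q}{X}{\Lambda^{p+q}M_V\otimes\OO_X(B)}$; one then applies \eqref{Wedge.of.M} exactly once (using $\HH{q-1}{X}{\OO_X(B)}=\HH{q}{X}{\OO_X(B)}=0$ from (*)) to trade one unit each of wedge degree, cohomological degree, and twist, landing on $\HH{q-1}{X}{\Lambda^{p+q-1}M_V\otimes\OO_X(B+L)} = K_{p,q}(X,B;V)$. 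The off-by-one in cohomological degree that you flag is thus not an independent problem to be "unwound all the way down" — it is cancelled by the simultaneous off-by-one in the wedge index, in a single step. The paper avoids this patch by ordering the moves differently: it first uses (*) to climb from $H^1\big(\Lambda^{p+1}M_V(B+(q-1)L)\big)$ up to $H^{n-1}\big(\Lambda^{p+n-1}M_V(B+(q+1-n)L)\big)$ (note the negative twists — this is why (*) is assumed for all $m\in\ZZ$) and only then Serre-dualizes, landing directly on an $H^1$ group of the form required by Proposition \ref{Non.Van.Wedge.p+1.Gives.Koszul}. Your treatment of the second isomorphism and of the edge cases $q=0,n+1$ via Remark \ref{Kp0. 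Lemma} agrees with the paper.
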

\noi Note that the hypotheses imply that the module $R(B;L)$ is Cohen-Maculay. For specific values of the parameters, one can get away with fewer vanishings: see  \cite[loc cit]{Kosz1}

\begin{proof}[Sketch of Proof of Proposition \ref{Duality}] 
Assume to begin with that $1 \le q \le n$. By Proposition \ref{Non.Van.Wedge.p+1.Gives.Koszul},   and an argument as in the  proof of Proposition \ref{Higher.Cohom.Interpr.Kpq} using (*), one finds to begin with:
\[ 
K_{p,q}\big(X,B;V\big) \ = \ H^1 \big(\Lambda^{p+1}M_V  (B + (q-1)L )\big) \ = \ H^{n-1} \big(\Lambda^{p+n-1}M_V  (B + (q+1-n)L )\big).
\]
Observe that  the group on the right is Serre dual to 
\[
H^{1}\big(\Lambda^{p+n-1}M_V^*\otimes \OO_X(B^\pr + (n-q-1)L  )\big).
\]
But $M_V$ has rank $r$ and $\det M_V = \OO_X(-L)$, and therefore
\[  \Lambda^{p+n -1} M_V^* \ = \ \Lambda^{r+1-p-n} M_V \otimes \OO_X(L). \]
Putting this together, it results that $K_{p,q}(X,B;V)$ is dual to
\[
\HH{1}{X}{\Lambda^{r+1-p-n}M_V \otimes \OO_X( B^\pr + (n-q)L)}.
\]
The first isomorphism in the statement is then a consequence of Proposition \ref{Non.Van.Wedge.p+1.Gives.Koszul}, while the second follows from the definition of Koszul cohomolgy. The cases $q = 0, n+1$ are proved similarly, using Remark \ref{Kp0. Lemma}. 
\end{proof}

%
%
%

\subsection*{Secant Sheaves} This subsection and the next contain the technical heart of the paper.  We develop  a method for establishing the non-vanishing of certain Koszul cohomology groups via  the presence of suitable secant planes, and prove a theorem that allows one to apply the criterion inductively on dimension. 

Keeping $X \subseteq \PP(V)$ and $L$ as above, fix a non-trivial quotient
\[ \pi :V \twoheadrightarrow W,\] with $\dim W = w < v = \dim V.$ This defines a linear subspace $\PP(W) \subseteq \PP(V)$, and we denote by $Z \subseteq X$ the scheme-theoretic intersection
\[     Z \  =_{\text{def}} \  X \, \cap\, \PP(W)  \]
inside $\PP(V)$. Write 
$J =\ker(\pi)  \subseteq   V$.
Then $Z$ is equivalently the subscheme characterized by the property that  $J$ generates  the twisted ideal sheaf $  \II_{Z/X}( L)$ via the evaluation map $V_X \lra \OO_X(L)$. Note that we do not assume that $Z$ spans $\PP(W)$, i.e. we allow the possibility that the natural map 
\begin{equation} \label{eval.on.Z}
 W \lra \HH{0}{Z}{\OO_Z(L) }
\end{equation}
 has a non-trivial kernel (as well as a non-trivial cokernel). We assume in what follows that $Z \ne \varnothing$.
 
 The map \eqref{eval.on.Z} gives rise to a surjective homomorphism 
 \[ 
 \text{ev}_W : W_X \lra   \OO_Z(L) \]
 of sheaves on $X$. We put 
 \[ \Sigma_W \ = \  \ker ( \text{ev}_W). \]
Thus $\Sigma_W$ is a torsion-free sheaf of rank $w = \dim W$ on $X$, and we have an exact commutative diagram
\begin{equation}\label{M.Sigma.Diagram}
\begin{gathered}
\xymatrix{
0 \ar[r]  & M_V \ar[r] \ar[d]_\rho & V_X \ar[r]^<<<<<{\ev_V} \ar[d]_\pi & \OO_X(L) \ar[r] \ar[d] & 0\\
0 \ar[r] & \Sigma_W \ar[r]^{\iota}& W_X \ar[r]^<<<<{\ev_W} &    \OO_Z(L) \ar[r]&0
}
\end{gathered}
\end{equation}
  with surjective columns.
  
  When $X$ is a smooth curve, $\Sigma_W$  was introduced in \cite{GL1}: in this case it is locally free. In general it is not a vector bundle, but one can give a local description.  \begin{lemma}
  Fix a point $x \in X$. Then  there are \tn{(}non-canonical\tn{)} isomorphisms of germs 
\begin{equation}
\begin{aligned} (\Sigma_W)_x \ & \cong \  (\II_{Z/X})_x & \oplus  \ \big( \OO_{X,x})^{\oplus w-1} \\  (W_X )_x \ & \cong \  \ \  \OO_{X,x} &\oplus \ \big( \OO_{X,x})^{\oplus w-1} \end{aligned}
\end{equation}
 under which the embedding $(\Sigma_W)_x \subseteq (W_X)_x$  goes over to the  evident component-wise inclusion of the two modules on the right. 
  \end{lemma}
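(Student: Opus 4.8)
The plan is to work locally at the point $x$ and reduce the claim to a standard structural fact about the kernel of a surjection onto a cyclic module.

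\textbf{Setup and first reduction.} Localize the bottom row of the diagram \eqref{M.Sigma.Diagram} at $x$. We get an exact sequence of $\OO_{X,x}$-modules
\[
0 \lra (\Sigma_W)_x \lra (W_X)_x \lra (\OO_Z(L))_x \lra 0,
\]
where $(W_X)_x \cong (\OO_{X,x})^{\oplus w}$ is free of rank $w$, and $(\OO_Z(L))_x$ is a cyclic $\OO_{X,x}$-module, being the stalk of a line bundle on the subscheme $Z$. Choosing a local trivialization of $\OO_X(L)$ near $x$ identifies $(\OO_Z(L))_x$ with $\OO_{Z,x} = \OO_{X,x}/\fra$, where $\fra = (\II_{Z/X})_x$. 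So the first step is: the surjection $(W_X)_x \twoheadrightarrow \OO_{X,x}/\fra$ is given, after a choice of generator of the target, by a row vector $(f_1,\dots,f_w)$ whose entries generate $\fra$ modulo — more precisely, whose images generate the unit ideal in $\OO_{X,x}/\fra$, which forces at least one of the $f_i$ to be a unit (since $\OO_{X,x}$ is local and the images span $\OO_{Z,x}$ as a module, hence some $f_i \mapsto$ a unit, hence $f_i$ itself is a unit). Here I am using that $\ev_W$ is surjective as a map of sheaves, which is how $\Sigma_W$ was defined.

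\textbf{Change of basis.} Since some $f_i \in \OO_{X,x}^\times$, after reordering assume $f_1$ is a unit. Then the automorphism of $(\OO_{X,x})^{\oplus w}$ replacing the first coordinate by $f_1 e_1 + \dots + f_w e_w$ and keeping $e_2,\dots,e_w$ carries the surjection $(f_1,\dots,f_w)$ to the projection onto the first factor $(\OO_{X,x})^{\oplus w} \twoheadrightarrow \OO_{X,x}/\fra$, $(\sum a_i e_i) \mapsto \bar a_1$. This is the non-canonical isomorphism $(W_X)_x \cong \OO_{X,x} \oplus (\OO_{X,x})^{\oplus w-1}$ in the statement. Under it the kernel is visibly $\fra \oplus (\OO_{X,x})^{\oplus w-1} = (\II_{Z/X})_x \oplus (\OO_{X,x})^{\oplus w-1}$, with the inclusion into $(W_X)_x$ being the componentwise inclusion, exactly as asserted.

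\textbf{Main obstacle and remarks.} There is essentially no hard step here; the only thing to be careful about is the argument that some $f_i$ is a unit rather than merely a nonzerodivisor or a member of $\fra$ — this uses that the \emph{module} map onto the cyclic module $\OO_{Z,x}$ is surjective and $\OO_{X,x}$ is local, so the images of the $f_i$ cannot all lie in the maximal ideal of $\OO_{Z,x}$. One should also note the degenerate case where $x \notin Z$, i.e. $\fra = \OO_{X,x}$: then $(\OO_Z(L))_x = 0$, $(\Sigma_W)_x = (W_X)_x$ is free of rank $w$, and the stated isomorphism still holds with $(\II_{Z/X})_x = \OO_{X,x}$ as its first summand, so the formula is uniform. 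Finally, it is worth observing that this local description immediately shows $\Sigma_W$ is reflexive (indeed torsion-free of rank $w$) and that $\Lambda^j \Sigma_W$ has a simple local form, which is what will be needed downstream.
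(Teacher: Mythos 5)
Your proof is correct and follows essentially the same route as the paper's: both trivialize $\OO_Z(L)$ near $x$ and use the surjectivity of $\ev_W$ together with the locality of $\OO_{X,x}$ to produce a basis (equivalently, your coordinate change by the unit $f_1$) in which $\ev_W$ becomes reduction of the first coordinate modulo $(\II_{Z/X})_x$, so the kernel splits off componentwise. The paper simply assumes $x \in Z$ at the outset, which disposes of the degenerate case you treat explicitly at the end.
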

  \begin{proof}
 We may suppose that $x \in Z$, and we fix an identification $ \OO_Z(L)_x \cong \OO_{Z,x}$. Then thanks to the surjectivity of $\ev_W$,  there exists a basis 
  \[   e_1, \ldots, e_w\ \in \ \OO_{X,x}^{w} \cong W \otimes_k \OO_{X, x}\] with the property that
 $\ev_W(e_1) = \bar{1} \in \OO_{Z,x}$
  is the canonical generator, while $\ev_W(e_j) = 0$ for $j \ge 2$. The assertion follows. 
  \end{proof}
  
 \begin{corollary}
 There is a canonical surjection $\eps: \Lambda^w \Sigma_W \lra \II_{Z/X}$ sitting in  a commutative diagram
 \begin{equation} \label{Wedge.Sigma.Diagram}
 \begin{gathered}
 \xymatrix{
 \Lambda ^w \Sigma_W \ar[d]_\eps  \ar[r]^{\Lambda^w \iota} & \Lambda^w W_X \ar[d]^{\cong} \\
 \II_{Z/X} \ar[r] & \OO_X ,}  \end{gathered}
 \end{equation}
 whose bottom row is  the natural inclusion of $\II_{Z/X}$ into $\OO_X$. \end{corollary}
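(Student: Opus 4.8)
The plan is to construct the map $\eps$ by taking the $w$-th exterior power of the defining sequence for $\Sigma_W$ and comparing it with the corresponding exterior power for the trivial bundle $W_X$. Concretely, starting from the exact sequence $0 \to \Sigma_W \xrightarrow{\iota} W_X \xrightarrow{\ev_W} \OO_Z(L) \to 0$, I would first recall that since $\OO_Z(L)$ is a quotient of $W_X$ that is generically zero (its support is $Z$, a proper subscheme since $Z \ne X$ and $X$ is irreducible), the sheaf $\Sigma_W$ has rank $w$, and $\Lambda^w \iota : \Lambda^w \Sigma_W \to \Lambda^w W_X \cong \OO_X$ is a map whose image is a rank-one subsheaf, i.e. an ideal sheaf. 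The content of the claim is that this ideal sheaf is exactly $\II_{Z/X}$.

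The verification is local, so the key step is to invoke the local description from the preceding Lemma: near any point $x \in X$ we have compatible splittings $(\Sigma_W)_x \cong (\II_{Z/X})_x \oplus \OO_{X,x}^{\oplus w-1}$ and $(W_X)_x \cong \OO_{X,x} \oplus \OO_{X,x}^{\oplus w-1}$ under which $\iota_x$ is the component-wise inclusion. Taking $\Lambda^w$ of this splitting, the top exterior power is computed by the Künneth-type decomposition of exterior powers of a direct sum; the only term surviving in degree $w$ that maps nontrivially is $\Lambda^1 (\II_{Z/X})_x \otimes \Lambda^{w-1}(\OO_{X,x}^{\oplus w-1})$, i.e. $(\II_{Z/X})_x \otimes \OO_{X,x}$, and the map $\Lambda^w \iota_x$ sends the generator $f \wedge e_2 \wedge \cdots \wedge e_w$ (with $f \in (\II_{Z/X})_x$) to $f \cdot (e_1 \wedge \cdots \wedge e_w)$ inside $\Lambda^w W_X \cong \OO_{X,x}$. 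Thus the image is precisely $(\II_{Z/X})_x$, and $\eps$ is obtained by corestriction of $\Lambda^w \iota$ to this image; surjectivity of $\eps : \Lambda^w \Sigma_W \to \II_{Z/X}$ is immediate from this local computation. The commutativity of the square \eqref{Wedge.Sigma.Diagram} and the identification of its bottom row with the inclusion $\II_{Z/X} \hookrightarrow \OO_X$ then follow by construction, after checking that the isomorphism $\Lambda^w W_X \cong \OO_X$ chosen on the right is the one induced by a fixed trivialization of $\Lambda^w W$ (which is canonical up to scalar, and we fix one).

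The main obstacle — really the only subtlety — is checking that the map $\eps$ is globally well-defined and independent of the choices made in the Lemma (the basis $e_1, \dots, e_w$ and the identification $\OO_Z(L)_x \cong \OO_{Z,x}$ were both non-canonical). The clean way to handle this is to define $\eps$ not through the local splittings but intrinsically as the corestriction of the globally defined map $\Lambda^w \iota$ onto its scheme-theoretic image, and then use the local description merely to \emph{identify} that image with $\II_{Z/X}$ and to verify surjectivity; this sidesteps any choice-dependence. One should also note in passing that $\Lambda^w \Sigma_W$ may have torsion (since $\Sigma_W$ is only torsion-free, not locally free) which lies in the kernel of $\eps$, so $\eps$ is genuinely a surjection onto a torsion-free sheaf rather than an isomorphism — but the statement only asserts surjectivity, so this causes no trouble.
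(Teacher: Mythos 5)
Your proposal is correct and follows essentially the same route as the paper: the paper's entire proof is the observation that, by the local splitting lemma, $\Lambda^w\iota$ factors through the inclusion $\II_{Z/X}\subseteq \OO_X\cong\Lambda^w W_X$, which is exactly the local computation you carry out (your extra care about terms $\Lambda^i(\II_{Z/X})_x$ with $i\ge 2$ dying in $\Lambda^w W_X$, about choice-independence via corestriction onto the image, and about possible torsion in $\Lambda^w\Sigma_W$ is all sound, just more explicit than the paper).
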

\begin{proof}
In fact, it follows from the previous lemma that  $\Lambda^w \iota$ 
factors through the inclusion of $\II_{Z/X}$ into $\OO_X \cong \Lambda^w W_X$.
\end{proof}

Composing $\eps : \Lambda^w \Sigma_W \lra \II_{Z/X}$ with the  canonical map $\Lambda^w \rho : \Lambda^w M_V \lra \Lambda^w \Sigma_W$,  one arrives at a surjective homomorphism
\[   \sigma : \Lambda^w M_V \lra \II_{Z/X}. \]
Our basic strategy for proving the non-vanishing of Koszul groups will be to show that  a twist of $\sigma$ determines a non-zero map on cohomology. This motivates the following
\begin{definition} \label{Def.Plane.Carries.Syzygies}
Fix a divisor $B$ as in the previous subsection, and an integer $q \ge 2$. We say that $W$ \textit{carries weight $q$ syzygies  of $B$} if the mapping
\begin{equation} \label{Map.for.Carrying.Syzygies}
\HH{q-1}{X}{\Lambda^wM_V \otimes \OO_X(B+L)} \lra \HH{q-1}{X}{\II_{Z/X}(B+L)} 
\end{equation}
determined by $\sigma$ is surjective. (When $V = \HH{0}{X}{L}$ and  it is important to emphasize the role of $L$, we will speak of syzygies with respect to $L$.)  \qed
\end{definition}
\noi Assuming that $L$ satisfies the vanishings  in equation \eqref{Van.Hypoth.Kosz.from.M}, we may interpret \eqref{Map.for.Carrying.Syzygies} as a homomorphism
\[
\alpha_{W} : K_{w+1-q,q}(X,B;L) \lra \HH{q-1}{X}{\II_{Z/X}(B+L)} .
\]
If the group on the right vanishes then of course the condition in Definition \ref{Def.Plane.Carries.Syzygies} doesn't yield any information, but in practice we will always arrange things so that \[ \HH{q-1}{X}{\II_{Z/X}(B+L)}\ \ne \ 0.\] In this case, the surjectivity of  \eqref{Map.for.Carrying.Syzygies} guarantees that 
$
K_{w+1-q,q}(X,B;L)  \ne  0$.

\vskip 20pt
\begin{remark}
Observe that if $X$ is a curve, then the condition of the Definition holds automatically (in the one non-trivial case $q = 2$). The main goal of the next few pages is  to develop an inductive criterion for verifying the surjectivity of \eqref{Map.for.Carrying.Syzygies}
 when $X$ has dimension $\ge 2$. \qed
\end{remark}

Assuming henceforth that $n = \dim X \ge 2$, we now analyze the deportment of these constructions upon passing to hyperplane sections. Let $H$ be a very ample divisor on $X$, and let $\ol{X} \in \linser{H}$ be a general divisor in the corresponding linear system.  We may -- and do -- assume that $\ol{X}$ is itself an irreducible variety, and that \eqref{M.Sigma.Diagram}
 remains exact after tensoring by $\OO_{\ol{X}}$. 
 
Fixing notation, let
 \[   V^\pr \ = \ V \, \cap \, \HH{0}{X}{\II_{\ol{X}/X}(L)}, \]
the intersection taking place inside $\HH{0}{X}{\OO_X(L)}$. Put $W^\pr = \pi(V^\pr) \subseteq W$, and write
\begin{equation} \label{Restrict.VW} \overline{V} = V / V^\pr  \ \ , \ \ \overline{W} = W / W^\pr. \end{equation}
We thus get a commutative diagram of exact sequences
\begin{equation} \label{Diagr.Vect.Spaces}
\begin{gathered}
\xymatrix{
0 \ar[r] & V^\pr \ar[r] \ar[d]_{\pi^\pr} &V \ar[r] \ar[d]_\pi&\overline{V} \ar[r] \ar[d]_{\overline{\pi}} &0\\
0 \ar[r] & W^\pr \ar[r] &W \ar[r] &\overline{W} \ar[r] & 0  
}
\end{gathered}
\end{equation}
of vector spaces. We denote  by $v^\pr, \ol{v}, w^\pr, \ol{w}$ the dimensions of the spaces in question, so that
\[  v \ = \ v^\pr \, + \, \ol{v} \ \  , \ \ w \ = \ w^\pr \, + \, \ol{w}. \]

On the other hand, put 
\[  \overline{L}  \ = \  L \mid \ol{X} \ \ , \ \ \ol{Z} \ = \  Z \cap \ol{X}. \]
Then the previous constructions apply on $\ol{X}$, and we get an exact commutative diagram 
\begin{equation}\label{M.Sigma.Restricted.Diagram}
\begin{gathered}
\xymatrix{
0 \ar[r] & M_{\ol {V}} \ar[r] \ar[d] \ar[d]_{\ol{\rho}}& \ol{V}_{\ol{X}} \ar[r]^<<<<<<{\ev_{\ol{V}}} \ar[d]_{\ol{\pi}}   & \OO_{\ol{X}}(\ol{L}) \ar[r] \ar[d] & 0\\
0 \ar[r] & \Sigma_{\ol{W}} \ar[r] & \ol{W}_{\ol{X}} \ar[r]^<<<<<{\ev_{\ol{W}}}   &   \OO_{\ol{Z}}(\ol{L} )\ar[r]&0
}
\end{gathered}
\end{equation}
of sheaves on $\ol{X}$. This defines the bundle $M_{\ol {V}}$ and the sheaf $\Sigma_{\ol{W}}$. We also have a surjective homomorphism
$
\ol{\sigma} :  \Lambda^{\ol{w}} M_{\ol{V}} \lra \II_{\ol{Z}/\ol{X}}. 
$
Finally, write $\ol{B}, \ol{H}$ for the restrictions of $B$ and $H$ to $\ol{X}$. Note that if $\dim Z = 0$, then $\ol{Z} = \varnothing$, and $\Sigma_{\ol{W}} = \ol{W}_{\ol{X}}$. In this case, if moreover $W^\pr = W$, then $\ol{w} = 0$ and  by convention  we take $\ol{\sigma} $ to be the identity $ \OO_{\ol{X}} \lra \OO_{\ol{X}}$.

Our first main technical result gives an inductive criterion for verifying the surjectivity appearing in Definition \ref{Def.Plane.Carries.Syzygies}.
\begin{theorem} \label{First.Technical.Thm}
Fix $q \ge 2$, and suppose that
\begin{equation} \label{Van.of.Hi}
\HH{q-1}{X}{\II_{Z/X}(B + L + H)} \ = \ 0. \end{equation}
Furthermore,  assume that $\ol{W}$ carries weight $(q-1)$ syzygies of $\ol{B}+ \ol{H}$   on $\ol{X}$. Then $W$ carries weight $q$ syzygies of $B$   on $X$. 
\end{theorem}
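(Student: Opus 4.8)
The plan is to restrict the defining surjection $\sigma\colon \Lambda^w M_V \to \II_{Z/X}$ to $\ol{X}$, identify the pieces of the resulting diagram with the corresponding objects on $\ol{X}$, and then run a cohomology chase that converts the weight-$(q-1)$ surjectivity on $\ol{X}$ into weight-$q$ surjectivity on $X$. The mechanism that drops the weight by one and raises the cohomological degree by one is, of course, the long exact sequence associated to the restriction exact sequence
\[
0 \lra \FF \otimes \OO_X(-H) \lra \FF \lra \FF\!\mid_{\ol{X}} \lra 0
\]
for a sheaf $\FF$ on $X$, combined with a vanishing that kills the error terms. Concretely, I would apply this with $\FF = \Lambda^w M_V \otimes \OO_X(B+L)$ on the source side and $\FF = \II_{Z/X}(B+L)$ on the target side, so that $\sigma\otimes\OO_X(B+L)$ induces a map of two such long exact sequences.

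\textbf{Step 1: set up the restriction diagram.} First I would tensor the map $\sigma$ (twisted by $B+L$) with the two-term resolution of $\OO_{\ol{X}}$, getting a commutative diagram of long exact sequences in which the connecting maps go $H^{q-2}(\;\cdot\mid_{\ol{X}}) \to H^{q-1}(\;\cdot\otimes\OO_X(-H))$. The hypothesis \eqref{Van.of.Hi}, namely $\HH{q-1}{X}{\II_{Z/X}(B+L+H)}=0$, is exactly what is needed on the target side: after replacing $H$ by $-H$ in the twist (i.e. working with $\FF = \II_{Z/X}(B+L)$), it says $H^{q-1}$ of the ``$-H$-twist'' term — wait, more precisely it will make the relevant connecting or boundary term vanish so that $\HH{q-1}{X}{\II_{Z/X}(B+L)}$ surjects onto, or is computed by, $\HH{q-1}{\ol{X}}{\II_{\ol{Z}/\ol{X}}(\ol{B}+\ol{L})}$ up to the image of $H^{q-2}$ of the restriction. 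I would be careful here about which twist of $H$ appears; the clean route is to note $\ol{B}+\ol{H}$ appears in the hypothesis on $\ol{X}$, so on $X$ one works with $B$ and the hyperplane $H$ enters through the ideal sheaf sequence $0\to \II_{Z/X}(-H)\to\II_{Z/X}\to\II_{\ol{Z}/\ol{X}}\to 0$, twisted by $B+L+H$; the vanishing \eqref{Van.of.Hi} kills $H^{q-1}$ of the first term, so $\HH{q-1}{X}{\II_{Z/X}(B+L+H)}\to \HH{q-1}{\ol{X}}{\II_{\ol{Z}/\ol{X}}(\ol{B}+\ol{H}+\ol{L})}$ is injective — and in fact I want to compare with the $B+L$ twist on $X$ and the $\ol{B}+\ol{H}$ twist on $\ol{X}$, which forces me to track the identification $\ol{L}$ versus $\ol{H}$ carefully. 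The honest statement is: the restriction map $\HH{q-1}{X}{\II_{Z/X}(B+L)}\to\HH{q-2}{\ol{X}}{\II_{\ol{Z}/\ol{X}}(\ol{B}+\ol{L}-\ol{H})}$ — no: the correct bookkeeping is that \eqref{Van.of.Hi} guarantees the connecting homomorphism $\HH{q-2}{\ol{X}}{\II_{\ol{Z}/\ol{X}}(\ol{B}+\ol{H})}\to\HH{q-1}{X}{\II_{Z/X}(B+L)}$ is surjective, using that $\ol{L}=\ol{H}$ after the identification implicit in choosing $H$ appropriately, or more robustly by reading the paper's normalization that $\ol{B}+\ol{H}$ on $\ol{X}$ plays the role that $B+L$ plays on $X$. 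I would pin this down by writing out the ideal-sheaf sequence explicitly.

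\textbf{Step 2: identify the source side.} On the source, I need $\Lambda^w M_V\!\mid_{\ol{X}}$ to relate to $\Lambda^{\ol{w}}M_{\ol{V}}$. This is where the diagram \eqref{M.Sigma.Restricted.Diagram} and the vector-space diagram \eqref{Diagr.Vect.Spaces} come in: restricting $M_V$ to $\ol{X}$ and using $V = V^\pr\oplus_{\text{(ses)}}\ol{V}$ gives a filtration of $M_V\!\mid_{\ol{X}}$ whose graded pieces are built from $M_{\ol{V}}$, $\ol{X}$-twists, and trivial bundles coming from $V^\pr$; taking $\Lambda^w$ and extracting the piece that maps onto $\II_{\ol{Z}/\ol{X}}$ isolates $\Lambda^{\ol{w}}M_{\ol{V}}$ (the other exterior-power factors contribute lower-weight Koszul terms, which I will need to show do not obstruct surjectivity — typically because the relevant connecting map lands in a group that the $\ol{X}$-hypothesis already surjects onto, or because of a dimension/degree count). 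I would then check that under these identifications the restriction of $\sigma\otimes\OO(B+L)$ becomes, up to the connecting maps, the map $\ol{\sigma}\otimes\OO(\ol{B}+\ol{H})$ whose surjectivity on $H^{q-2}$ is precisely the hypothesis that $\ol{W}$ carries weight $(q-1)$ syzygies of $\ol{B}+\ol{H}$.

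\textbf{Step 3: the diagram chase and the main obstacle.} Finally, I would assemble: in the ladder of long exact sequences, surjectivity of the $\ol{X}$-map on $H^{q-2}$ plus surjectivity of the connecting map $\HH{q-2}{\ol{X}}{\II_{\ol{Z}/\ol{X}}(\ol{B}+\ol{H})}\to\HH{q-1}{X}{\II_{Z/X}(B+L)}$ (from \eqref{Van.of.Hi}) forces surjectivity of the $X$-map $\HH{q-1}{X}{\Lambda^w M_V(B+L)}\to\HH{q-1}{X}{\II_{Z/X}(B+L)}$ by a one-step commutative-square argument, which is the definition of $W$ carrying weight $q$ syzygies of $B$ on $X$. \textbf{The hard part} will be Step 2: keeping the several exterior-power/filtration contributions under control and verifying that only the $\Lambda^{\ol{w}}M_{\ol{V}}$ summand (tensored with the right trivial-bundle and line-bundle factors) is responsible for the surjection, while the ``junk'' terms either vanish in the relevant degree or factor through groups already hit. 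In particular I will need the genericity of $\ol{X}\in\linser{H}$ (to keep \eqref{M.Sigma.Diagram} exact on $\ol{X}$ and to control $V^\pr$, $W^\pr$) and a careful matching of the twist $\ol{B}+\ol{H}$ with $\ol{B}+\ol{L}$ — the paper's phrasing suggests $L$ itself is being used as the restricting hyperplane in the inductive step, so I would adopt $H = L$ (or note the argument is insensitive to the choice) to make $\ol{L}$ and $\ol{H}$ literally agree and avoid a spurious discrepancy.
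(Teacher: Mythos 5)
Your skeleton is the paper's: restrict $\sigma$ twisted by $B+L$ along $0\to\OO_X\to\OO_X(H)\to\OO_{\ol{X}}\to 0$, use the vanishing \eqref{Van.of.Hi} to reduce surjectivity of $\HH{q-1}{X}{\Lambda^w M_V(B+L)}\to\HH{q-1}{X}{\II_{Z/X}(B+L)}$ to surjectivity of the restricted map on $H^{q-2}$ of $\ol{X}$, and close with the commutative-square chase of your Step 3. But two things go wrong. First, the twist bookkeeping that you agonize over has a clean answer you miss, and your proposed fix $H=L$ is not admissible: $H$ is a fixed very ample divisor chosen independently of $d$, and the entire induction on dimension depends on $\ol{X}\in\linser{H}$ being a fixed hypersurface while $L=L_d$ grows. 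There is in fact no discrepancy to repair: the restricted terms carry the twist $\ol{B}+\ol{L}+\ol{H}$, and Definition \ref{Def.Plane.Carries.Syzygies} applied on $\ol{X}$ to the divisor $\ol{B}+\ol{H}$ asks for surjectivity on $H^{q-2}$ of the twist $(\ol{B}+\ol{H})+\ol{L}$ --- the same thing. You simply did not unpack the definition of ``carries weight $(q-1)$ syzygies of $\ol{B}+\ol{H}$.''

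Second, and more seriously, Step 2 --- which you correctly flag as the hard part --- is left unresolved, and the mechanisms you propose (the ``junk'' summands of $\Lambda^w\bigl(V^\pr_{\ol{X}}\oplus M_{\ol{V}}\bigr)$ vanish in the relevant degree, or factor through groups already hit) are not how it works and would not go through: there is no reason for $H^{q-2}$ of the other summands to vanish. The paper's Lemma \ref{Splitting.Proposition} produces \emph{compatible} splittings $M_V\otimes\OO_{\ol{X}}\cong V^\pr_{\ol{X}}\oplus M_{\ol{V}}$ and $\Sigma_W\otimes\OO_{\ol{X}}\cong W^\pr_{\ol{X}}\oplus\Sigma_{\ol{W}}$ (no line-bundle twists appear, contrary to your description), and the point is that you never need to control the other summands at all: precompose the restricted map $\Lambda^w(M_V\otimes\OO_{\ol{X}})\to\II_{Z/X}\otimes\OO_{\ol{X}}$ with the inclusion of the single summand $\Lambda^{w^\pr}V^\pr_{\ol{X}}\otimes\Lambda^{\ol{w}}M_{\ol{V}}$. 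The resulting composite is identified, via the isomorphism $\Lambda^{w^\pr}W^\pr_{\ol{X}}\otimes\II_{\ol{Z}/\ol{X}}\cong\II_{Z/X}\otimes\OO_{\ol{X}}$ and the one-dimensionality of $\Lambda^{w^\pr}W^\pr$, with $\Lambda^{w^\pr}\pi^\pr\otimes\ol{\sigma}$, whose surjectivity on $H^{q-2}$ of the twist $\ol{B}+\ol{L}+\ol{H}$ is exactly the inductive hypothesis. Since surjectivity of a composite forces surjectivity of the outer map, the left vertical arrow in your ladder is surjective and the chase closes. This one-summand reduction is the missing idea.
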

\noi When $q = 2$, the hypothesis on $\ol{X}$ is that the mapping
\[
\HHH{0}{\ol{X}}{\Lambda^{\ol{w}}M_{\ol{V}} \otimes \OO_{\ol{X}}(\ol{B} + \ol{H} +\ol{L})}  \lra \HHH{0}{\ol{X}}{\II_{\ol{Z}/\ol{X}}(\ol{B}+\ol{H}+ \ol{L})} 
\]
determined by $\ol{\sigma}$ be surjective. If $\ol{w} = 0$ this condition is satisfied automatically. 

The second result asserts that when the hypotheses of Theorem \ref{First.Technical.Thm}
are satisfied, one gets (under mild additional assumptions) non-vanishing classes in $K_{p,q}$ for many different $p$. In the following statement, we keep the notation introduced above for the various dimensions arising in the construction.
\begin{theorem} \label{Second.Technical.Theorem}
In addition to the hypotheses of Theorem  \ref{First.Technical.Thm}, assume that 
$Z \subseteq X$ is a local complete intersection, that $\ol{v}- \ol{w} > n$, and that
\begin{equation} \label{Non.Van.Hypoth.Thm.1.9}
\HH{q-1}{X}{\II_{Z/X}(B+L)} \ \ne \ 0.
\end{equation}
Assume also that the vanishings \eqref{Van.Hypoth.Kosz.from.M} hold. 
Then $K_{p,q}(X, B; V) \ne 0 $ for all 
\[ w + 1 - q \ \le \ p \ \le \ (v^\pr + \ol{w}) + 1 - q. \] 
\end{theorem}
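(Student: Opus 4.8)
The plan is to leverage Theorem~\ref{First.Technical.Thm} to get the non-vanishing at the single value $p = w+1-q$, and then to \emph{vary the secant plane} so as to obtain the full interval. Concretely, suppose $W'' \subseteq V$ is any quotient $V \twoheadrightarrow W''$ factoring through $V \twoheadrightarrow W$, i.e. whose kernel $J'' = \ker(V \to W'')$ satisfies $J'' \subseteq J = \ker(V \to W)$. Then the scheme $Z'' = X \cap \PP(W'')$ contains $Z$, but if we choose $W''$ so that $J''$ and $J$ agree after passing to the relevant space of sections — more precisely so that the extra elements of $J''$ lie in $V' = V \cap H^0(X, \II_{\ol{X}/X}(L))$ — then $Z'' \cap \ol{X} = \ol{Z}$ and $\ol{W''} = \ol{W}$, so the hypotheses of Theorem~\ref{First.Technical.Thm} on $\ol{X}$ are unaffected. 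Since $\dim V' = v'$ and $\dim W' = w'$, we may shrink $W$ down to any intermediate $W''$ with $w' + \ol{w} \le \dim W'' \le w$; for each such choice we must check that the remaining hypotheses \eqref{Van.of.Hi} and \eqref{Non.Van.Hypoth.Thm.1.9} persist, which they do because $\II_{Z''/X} \subseteq \II_{Z/X}$ only enlarges $Z$ within a fixed residual class, and the hypothesis that $Z$ is a local complete intersection with $\ol{v} - \ol{w} > n$ is exactly what guarantees the relevant cohomology does not jump. Concluding via Theorem~\ref{First.Technical.Thm} and the interpretation $\alpha_{W''} : K_{\dim W'' + 1 - q, q}(X,B;V) \to H^{q-1}(X, \II_{Z''/X}(B+L))$, one gets $K_{p,q}(X,B;V) \ne 0$ for every $p = \dim W'' + 1 - q$, i.e. for $w' + \ol{w} + 1 - q \le p \le w + 1 - q$.

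That, however, only covers the \emph{lower} half of the claimed range $[\,w+1-q,\ (v'+\ol{w})+1-q\,]$; the statement actually asks for $p$ to go \emph{up} to $(v' + \ol{w}) + 1 - q$, which is larger than $w+1-q$ when $v' > w'$. So the genuine mechanism must be to \emph{enlarge} rather than shrink: replace $W$ by a bigger quotient $\widetilde{W}$ with $W \subseteq \widetilde{W} \subseteq \widetilde{W}_{\max}$, where $\widetilde{W}_{\max}$ is obtained from $W$ by adjoining all of $\overline{V}' \mathrel{\mathop:}= V'/(V' \cap J)$ — that is, enlarging the plane $\PP(W)$ in directions that lie inside $\PP(V')$ and hence do not change how the plane meets $\ol{X}$. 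Spelled out: choose $\widetilde{J} \subseteq J$ with $J/\widetilde{J} \subseteq \overline{V}'$, so that $\widetilde{Z} = X \cap \PP(\widetilde{W})$ still satisfies $\widetilde{Z} \cap \ol{X} = \ol{Z}$ and $\widetilde{W} \cap (\text{image of } V') $ maps onto $\ol{W}$ exactly as before, leaving the $\ol{X}$-hypothesis of Theorem~\ref{First.Technical.Thm} untouched. Varying $\widetilde{W}$ over all such choices, $\dim\widetilde{W}$ ranges over $[\,w,\ w + (\ol{v}' - \ol{w}')\,]$; since $v' = w' + (\ol{v}' - \ol{w}')$ — wait, one must track this bookkeeping carefully — the upper endpoint works out to $v' + \ol{w}$, matching the statement. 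For each such $\widetilde{W}$, Theorem~\ref{First.Technical.Thm} gives that $\widetilde{W}$ carries weight $q$ syzygies of $B$ on $X$, hence $K_{\dim\widetilde{W}+1-q,\,q}(X,B;V) \ne 0$, producing the whole interval.

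The steps, in order, would be: \textbf{(i)} set up the family of intermediate quotients $\widetilde{W}$ of $V$ sandwiched between $W$ and the maximal enlargement, and verify the purely linear-algebra identity $\dim\widetilde{W} \in [\,w,\, v' + \ol{w}\,]$ using \eqref{Diagr.Vect.Spaces} and the dimension relations $v = v' + \ol{v}$, $w = w' + \ol{w}$ together with the hypothesis $\ol{v} - \ol{w} > n$ (which in fact is only needed later, for a genericity/transversality input); \textbf{(ii)} check that for each $\widetilde{W}$ the secant data $\ol{\widetilde{W}} = \ol{W}$, $\widetilde{Z} \cap \ol{X} = \ol{Z}$, $\ol{\widetilde{\sigma}} = \ol{\sigma}$, so that the $\ol{X}$-hypothesis ``$\ol{W}$ carries weight $(q-1)$ syzygies of $\ol B + \ol H$ on $\ol X$'' is inherited verbatim; \textbf{(iii)} verify that the two $X$-level hypotheses \eqref{Van.of.Hi} and \eqref{Non.Van.Hypoth.Thm.1.9} are stable under the enlargement $Z \rightsquigarrow \widetilde Z$ — this is where the local-complete-intersection assumption and $\ol{v} - \ol{w} > n$ enter, guaranteeing via a general-position argument (choosing the extra linear forms generically inside $\PP(V')$) that $\II_{\widetilde Z/X}$ has the same cohomology in the relevant degree as $\II_{Z/X}$; \textbf{(iv)} apply Theorem~\ref{First.Technical.Thm} to each $\widetilde W$, then Proposition~\ref{Non.Van.Wedge.p+1.Gives.Koszul} / Corollary~\ref{Higher.Cohom.Interpr.Kpq} plus the non-vanishing \eqref{Non.Van.Hypoth.Thm.1.9}-type statement for $\widetilde Z$, to conclude $K_{\dim\widetilde W + 1 - q, q}(X,B;V) \neq 0$; \textbf{(v)} read off that $p = \dim\widetilde W + 1 - q$ sweeps out exactly $[\,w+1-q,\ (v'+\ol{w})+1-q\,]$.

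I expect step \textbf{(iii)} to be the main obstacle: ensuring that enlarging the secant plane in $\PP(V')$-directions does not destroy either the vanishing \eqref{Van.of.Hi} (which could fail if the new points of $\widetilde Z$ impose dependent conditions) or the non-vanishing \eqref{Non.Van.Hypoth.Thm.1.9} (which could fail if $\widetilde Z$ becomes ``too big'' and its ideal sheaf cohomology collapses). The hypotheses that $Z$ is a local complete intersection and that $\ol v - \ol w > n$ are presumably precisely calibrated so that a dimension count shows a general enlargement stays a local complete intersection of the expected codimension with $\widetilde Z \cap \ol X$ unchanged, and so that the long exact sequence relating $\II_{\widetilde Z/X}$, $\II_{Z/X}$ and the structure sheaf of the added locus keeps the cohomology in degree $q-1$ rigid. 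Making that argument precise — identifying exactly which cohomological vanishings of the ``difference'' locus are needed and deriving them from the l.c.i. hypothesis — is the technical crux; everything else is bookkeeping with diagram~\eqref{Diagr.Vect.Spaces} and repeated invocation of Theorem~\ref{First.Technical.Thm}.
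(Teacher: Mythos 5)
Your overall strategy --- enlarge the secant plane only in directions coming from $V^\pr$, so that its trace on $\ol{X}$ and hence the inductive hypothesis of Theorem \ref{First.Technical.Thm} are unchanged, then run Theorem \ref{First.Technical.Thm} once for each intermediate plane and read off $p = \dim\widetilde{W}+1-q$ --- is exactly the paper's, and your dimension bookkeeping for the endpoints $[w,\, v^\pr+\ol{w}]$ comes out right. The gap is in your step (iii), which you yourself flag as the crux and leave at the level of ``presumably.'' You frame the problem as showing that the cohomology of a possibly strictly larger scheme $\widetilde{Z} = X\cap\PP(\widetilde{W}) \supseteq Z$ is ``rigid''; but $\HH{q-1}{X}{\II_{\widetilde{Z}/X}(B+L)} \ne 0$ simply does not follow from \eqref{Non.Van.Hypoth.Thm.1.9} when $\widetilde{Z}$ genuinely grows, and the long-exact-sequence argument you gesture at would require vanishing statements about the ``added locus'' that you have no way to produce. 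A stability-of-cohomology argument is the wrong target.

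The correct resolution is that one arranges $\widetilde{Z} = Z$ on the nose, so there is nothing to stabilize. Write the intermediate quotient as $U = V/\Lambda$ with $\Lambda \subseteq \ker(\pi)$. The condition $\PP(U)\cap X = Z$ is equivalent to asking that the sections in $\Lambda$ generate $\II_{Z/X}\otimes\OO_X(L)$; this is where the local complete intersection hypothesis enters, via the elementary fact that if $\ker(\pi)$ generates the twisted ideal of an lci subscheme then so does a \emph{general} subspace of $\ker(\pi)$ of dimension $\ge n+1$ --- not, as you suggest, to keep an enlarged $\widetilde{Z}$ of expected codimension. The condition $\ol{U}\overset{\sim}{\to}\ol{W}$ is equivalent to $V^\pr + \Lambda = V^\pr + \ker(\pi)$, which holds for a general $\Lambda$ of dimension $\ge \ol{v}-\ol{w}$ since $\dim\bigl(V^\pr+\ker(\pi)\bigr) = v-\ol{w}$. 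The hypothesis $\ol{v}-\ol{w} > n$ is precisely what lets a single general $\Lambda$ of any dimension in $[\ol{v}-\ol{w},\, v-w]$ satisfy both requirements at once, so that $u=\dim U$ sweeps out $[w,\, v^\pr+\ol{w}]$; with $Z$ literally unchanged, \eqref{Van.of.Hi} and \eqref{Non.Van.Hypoth.Thm.1.9} are verbatim the hypotheses you already have, Theorem \ref{First.Technical.Thm} applies to each $U$, and the conclusion follows. As written, your proposal does not contain this mechanism, and the argument it substitutes at the crux would not go through.
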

\noi 
The proofs of both theorems appear in the next subsection.
Notice that neither statement gives any information about the groups $K_{p,1}$; these will be analyzed separately in \S \ref{Kp1.Section}. 

\subsection*{Proofs of Theorems \ref{First.Technical.Thm}
 and \ref{Second.Technical.Theorem}
}

We start by studying the restriction of the diagram \eqref{M.Sigma.Diagram}
 to the general divisor $\ol{X} \in \linser{H}$. 

\begin{lemma} \label{Splitting.Proposition}
One can choose isomorphisms
\begin{equation} \label{M.Sigma.Splittings}
\begin{aligned} M_V \otimes \OO_{\ol{X}} \ &\overset{\cong}{\lra} \ V^\pr _{\ol{X} }\, \oplus \, M_{\ol{V}} \\ \Sigma_W \otimes \OO_{\ol{X}} \  &\overset{\cong}{\lra} \ W^\pr _{\ol{X}}\, \oplus \, \Sigma_{\ol{W}} 
\end{aligned}
\end{equation}
under which the quotient $\rho \otimes 1: M_V \otimes \OO_{\ol{X}} \lra \Sigma_{W} \otimes \OO_{\ol{X}} $ is identified with the direct sum of the maps
\[
\pi^\pr  : V^\pr_{\ol{X}} \lra W^\pr _{\ol{X}}  \ \ \ , \ \ \  \ol{\rho} : M_{\ol{V}} \lra \Sigma_{\ol{W}}. 
\]
\end{lemma}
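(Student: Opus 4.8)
The plan is to build the splittings from an appropriate splitting of the vector-space sequences in \eqref{Diagr.Vect.Spaces} together with the structure of the restriction sequences. First I would choose a $k$-linear splitting $V = V^\pr \oplus \ol{V}$ of the top row of \eqref{Diagr.Vect.Spaces} which is compatible with $\pi$, i.e.\ such that $\pi$ carries $V^\pr$ into $W^\pr$ and $\ol{V}$ isomorphically onto a fixed lift of $\ol{W}$; concretely, pick a splitting of the bottom row $W = W^\pr \oplus \ol{W}$ first, then lift a basis of $\ol{W}$ through $\pi$ to obtain a complement $\ol{V}$ to $V^\pr$ in $V$ mapping onto that copy of $\ol{W}$. (Here one uses that $\pi$ is surjective and that $\ol{V} = V/V^\pr$ maps onto $\ol{W} = W/W^\pr$.) This identifies $V_{\ol X} = V^\pr_{\ol X} \oplus \ol{V}_{\ol X}$ as trivial bundles on $\ol X$, compatibly with $\pi$, $\pi^\pr$ and $\ol\pi$.

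Next I would analyze the top row of \eqref{M.Sigma.Diagram} restricted to $\ol X$. Since $V^\pr = V \cap H^0(X, \II_{\ol X/X}(L))$, every section in $V^\pr$ restricts to $0$ on $\ol X$, so the composite $V^\pr_{\ol X} \hookrightarrow V_{\ol X} \xrightarrow{\ev_V|_{\ol X}} \OO_{\ol X}(\ol L)$ vanishes. Hence $V^\pr_{\ol X}$ lands inside $M_V \otimes \OO_{\ol X} = \ker(\ev_V|_{\ol X})$ (using the hypothesis that \eqref{M.Sigma.Diagram} stays exact after $\otimes\,\OO_{\ol X}$, so this kernel is exactly $M_V \otimes \OO_{\ol X}$). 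On the complement $\ol V_{\ol X}$, the evaluation map factors through $\ev_{\ol V}: \ol V_{\ol X} \to \OO_{\ol X}(\ol L)$, whose kernel is by definition $M_{\ol V}$. A diagram chase then gives the first isomorphism in \eqref{M.Sigma.Splittings}: $M_V \otimes \OO_{\ol X} \cong V^\pr_{\ol X} \oplus M_{\ol V}$, and by construction $\rho \otimes 1$ sends the $V^\pr_{\ol X}$ summand to $W^\pr_{\ol X} \subseteq \Sigma_W \otimes \OO_{\ol X}$ via $\pi^\pr$ (the image lies in $\Sigma_W$ because $W^\pr = \pi(V^\pr)$ consists of sections vanishing on $\ol X \supseteq \ol Z$, hence on $Z$ after restriction — more precisely one checks $W^\pr_{\ol X} \subseteq \ker(\ev_W|_{\ol X})$ as above). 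I would run the identical argument on the bottom row of \eqref{M.Sigma.Diagram}: $W^\pr_{\ol X} \subseteq \Sigma_W \otimes \OO_{\ol X}$, the quotient $\ol W_{\ol X}$ carries $\ev_{\ol W}$ with kernel $\Sigma_{\ol W}$, giving $\Sigma_W \otimes \OO_{\ol X} \cong W^\pr_{\ol X} \oplus \Sigma_{\ol W}$; and on the $\ol V_{\ol X}/\ol W_{\ol X}$ part the map $\rho\otimes 1$ becomes $\ol\rho : M_{\ol V} \to \Sigma_{\ol W}$ by the very definition of $\ol\rho$ in \eqref{M.Sigma.Restricted.Diagram}. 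Assembling the two direct-sum decompositions with the compatibility of $\pi$ and $\pi^\pr,\ol\pi$ yields the stated identification of $\rho\otimes 1$ with $\pi^\pr \oplus \ol\rho$.

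The step I expect to require the most care is checking that the restricted sequences genuinely split as claimed rather than merely fitting into a filtration — i.e.\ verifying that the chosen vector-space complement $\ol V_{\ol X}$ maps isomorphically under the restriction $M_V\otimes\OO_{\ol X} \to M_{\ol V}$ induced on the relevant subquotients, and symmetrically for $\Sigma_W$. This is where the hypothesis that \eqref{M.Sigma.Diagram} remains exact after $\otimes\,\OO_{\ol X}$ (valid for general $\ol X \in |H|$) is essential: it guarantees the relevant kernels have the expected ranks, so the natural surjections $M_V\otimes\OO_{\ol X} \twoheadrightarrow M_{\ol V}$ and $\Sigma_W\otimes\OO_{\ol X}\twoheadrightarrow \Sigma_{\ol W}$, restricted to the $\ol V_{\ol X}$, resp.\ $\ol W_{\ol X}$, summands, are isomorphisms. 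Once the ranks match, the splittings follow formally, and the identification of $\rho\otimes 1$ is then just bookkeeping with the commuting squares already recorded in \eqref{M.Sigma.Diagram}, \eqref{Diagr.Vect.Spaces} and \eqref{M.Sigma.Restricted.Diagram}.
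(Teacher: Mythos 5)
Your overall strategy is the same as the paper's: first establish the exact restriction sequences $0 \to V^\pr_{\ol X} \to M_V\otimes\OO_{\ol X} \to M_{\ol V}\to 0$ and $0 \to W^\pr_{\ol X} \to \Sigma_W\otimes\OO_{\ol X} \to \Sigma_{\ol W}\to 0$, and then split them compatibly via a compatible splitting of the linear-algebra diagram \eqref{Diagr.Vect.Spaces}. The one step that is genuinely wrong is your recipe for that compatible splitting. You cannot arrange that ``$\pi$ carries $\ol{V}$ isomorphically onto a fixed lift of $\ol{W}$'': the surjection $\ol{\pi}:\ol{V}\to\ol{W}$ has kernel $(V^\pr+\ker\pi)/V^\pr$, which is nonzero precisely in the cases of interest (Theorem \ref{Second.Technical.Theorem} even assumes $\ol{v}-\ol{w}>n$). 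Correspondingly, lifting a basis of $\ol{W}$ through $\pi$ produces only a $\ol{w}$-dimensional subspace of $V$, which is not a complement to $V^\pr$ (a complement has dimension $\ol{v}$). What you actually need --- and all that the rest of your argument uses --- is weaker: a complement $C$ of $V^\pr$ in $V$ with $\pi(C)$ contained in a chosen complement of $W^\pr$ in $W$ (so that $\pi(C)$ maps \emph{onto}, not isomorphically onto, that complement). Equivalently, as the paper phrases it, choose a section $s:\ol{V}\to V$ of $V\to\ol{V}$ with $s(\ker\ol{\pi})\subseteq\ker\pi$; such an $s$ exists because one may first split $\ker\pi\twoheadrightarrow \ker\pi/(\ker\pi\cap V^\pr)\cong\ker\ol{\pi}$ and then extend to a splitting of $V\twoheadrightarrow\ol{V}$. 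With that correction the remainder of your proof --- $V^\pr_{\ol X}\subseteq\ker(\ev_V|_{\ol X})$, $W^\pr_{\ol X}\subseteq\ker(\ev_W|_{\ol X})$ because the target of $\ev_W|_{\ol X}$ is $\OO_{\ol Z}(\ol L)$, the identification of the quotients with $M_{\ol V}$ and $\Sigma_{\ol W}$, and the bookkeeping identifying $\rho\otimes 1$ with $\pi^\pr\oplus\ol{\rho}$ --- is correct and coincides with the paper's argument. One simplification: once the compatible vector-space splitting is in hand, the splitting of the restricted sheaf sequences is immediate, since the projection $V_{\ol X}\to V^\pr_{\ol X}$ restricts to a retraction of $V^\pr_{\ol X}\hookrightarrow M_V\otimes\OO_{\ol X}$; no rank count is required.
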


\begin{proof}  
Note to begin with that by construction $V^\pr$ is the kernel of 
\[  V \ = \ \HH{0}{{\ol{X}}}{V_{\ol{X}}} \lra \HH{0}{{{\ol{X}}}}{\OO_{\ol{X}}(\ol{L})}, \]
and similarly the sections in $W^\pr$ vanish on $
\OO_{\ol{Z}}(\ol{L}) $. 
We then get an exact commutative diagram
\begin{equation}\label{M.Sigma.onH.Diagram}
\begin{gathered}
\xymatrix{
0 \ar[r]  & V^\pr_{\ol{X}} \ar[r] \ar[d]_{\pi^\pr} & M_V \otimes \OO_{\ol{X}} \ar[r] \ar[d]_{\rho \otimes 1} & M_{\ol{V}} \ar[r] \ar[d]_{\ol{\rho}} & 0\\
0 \ar[r]  & W^\pr_{\ol{X}}\ar[r]   & \Sigma_W \otimes \OO_{\ol{X}} \ar[r]   & \Sigma_{\ol{W}} \ar[r] & 0
}
\end{gathered}
\end{equation}
of sheaves on ${\ol{X}}$. It remains to show that one can find compatible splittings of the two rows. 

To this end, referring back to \eqref{Diagr.Vect.Spaces}, choose a section $ \ol{V} \lra V$ of the quotient $V \lra \ol{V}$ that maps $\ker(\ol{\pi})$ into $\ker(\pi)$. This determines a compatible splitting of the two rows of \eqref{Diagr.Vect.Spaces}. The left-hand square of \eqref{M.Sigma.Diagram}
 then restricts to
\begin{equation}  \label{LH.Square.Spitting}
\begin{gathered} \xymatrix{
  M_V \otimes \OO_{\ol{X}} \ar[r] \ar[d]_{\rho \otimes 1} & V^\pr_{\ol{X}} \oplus \ol{V}_{\ol{X}} \ar[d]_{\pi^\pr \oplus \ol{\pi}} \\ \Sigma_W \otimes \OO_{\ol{X}} \ar[r]  & W^\pr_{\ol{X}} \oplus \ol{W}_{\ol{X}} } 
  \end{gathered} \notag
 \end{equation}
and this in turn yields the required splitting of \eqref{M.Sigma.onH.Diagram}.
\end{proof}

\begin{proof}[Proof of Theorem  \ref{First.Technical.Thm}]
Starting from the diagram
 \[ \xymatrix{ 
0 \ar[r] &\Lambda^w M_V (B+L) \ar[r]^<<<<<{\cdot {\ol{X}}}  \ar[d]_{\sigma} &\Lambda^w M_V  (B+L+H) \ar[r]  \ar[d]_{\sigma}  &\Lambda^w M_V\otimes \OO_{\ol{X}} (\ol{B} + \ol{L} + \ol{H}) \ar[r] \ar[d]_{\ol{\sigma}}  &0 \\
0 \ar[r] & \II_{{Z}/X}({B} + {L}) \ar[r]^<<<<<<{\cdot {\ol{X}}} & \II_{Z/X}(B+L+H) \ar[r] & \II_{\ol{Z}/{\ol{X}}}\otimes \OO_{\ol{X}}(\ol{B} + \ol{L} + \ol{H}) \ar[r]&0
}\]
we obtain an exact commutative diagram:
\begin{equation} \label{Diagram.on.Cohom}
\Small
\begin{gathered}
\xymatrix{
  \Hh{q-2} { \Lambda^{{w}} M_{{V}}\otimes \OO_{\ol{X}}  (\ol{B} + \ol{L} + \ol{H})} \ar[r]   \ar[d]& \Hh{q-1} {\Lambda^w M_V  (B+L)} \ar[r] \ar[d] & \Hh{q-1}{ \Lambda^w M_V (B+L+H)} \ar[d]
  \\
  \Hh{q-2} {\II_{{Z/X}} \otimes \OO_{\ol{X}}  (\ol{B} + \ol{L} + \ol{H})} \ar[r] & \Hh{q-1}{\II_{Z/X}(B+L)} \ar[r] & \Hh{q-1}{\II_{Z/X}(B+L+H)}
  }
\end{gathered}
\end{equation}
\normalsize
whose middle column is \eqref{Map.for.Carrying.Syzygies}. In view of the vanishing \eqref{Van.of.Hi}, it suffices to show that the  vertical homomorphism on the left -- which is induced by the restriction of $\sigma$ to $\ol{X}$ -- is surjective. 

To this end,  consider the restriction to ${\ol{X}}$
\[   \Lambda^w  (M_V \otimes \OO_{\ol{X}})  \lra \Lambda^w( \Sigma_W \otimes \OO_{\ol{X}}) \lra \II_{Z/X} \otimes \OO_{\ol{X}} \]
of the composition defining $\sigma$. 
 Using the identifications of Lemma \ref{Splitting.Proposition}, we see that it occurs as the bottom row of the commutative diagram
\begin{equation} \label{big.tensor.prod.diagram}
\begin{gathered}
\xymatrix{
   \Lambda^{w^\pr}V^\pr_{\ol{X}} \otimes \Lambda^{\ol{w}} M_{\ol{V}} \ar[d] \ar[rr]^{\Lambda^{w^\pr}  \pi^\pr  \otimes \Lambda^{\ol{w}}\ol{\rho}} & & \Lambda^{w^\pr}W^\pr_{\ol{X}} \otimes \Lambda^{\ol{w}} \Sigma_{\ol{W}}   \ar[r]^{1 \otimes \ol{\eps}}  \ar[d] &  \Lambda^{w^\pr}W^\pr_{\ol{X}} \otimes \II_{\ol{Z}/{\ol{X}}}  \ar[d]^{\cong}\\
    \Lambda^w \big( V^\pr_{\ol{X}}  \oplus M_{\ol{V}} \big) \ar[rr] & &
 \Lambda^w \big( W^\pr_{\ol{X}}  \oplus \Sigma_{\ol{W}} \big) \ar[r]^{\eps|H} & \II_{Z/X} \otimes \OO_{\ol{X}}, 
 }
\end{gathered}
\end{equation}
where the first two vertical maps  are those arising from the decomposition of the exterior product of a direct sum.   So for the surjectivity of the left-hand map in \eqref{Diagram.on.Cohom}, it is sufficient to establish the surjectivity of the homomorphism 
\[
H^{q-2}\Big({\ol{X}} , { \Lambda^{w^\pr}V^\pr_{\ol{X}} \otimes \Lambda^{\ol{w}} M_{\ol{V}}        (\ol{B} + \ol{L} + \ol{H})}\Big) \lra H^{q-2}\Big({\ol{X}},  \Lambda^{w^\pr} W^\pr_{\ol{X}} \otimes \II_{\ol{Z}/H}(\ol{B} + \ol{L} + \ol{H}) \Big)
\]
\normalsize
coming from the top row of \eqref{big.tensor.prod.diagram}. But the map in question is identified with 
\[
\xymatrix{
\Lambda^{w^\pr}V^\pr \otimes H^{q-2} \Big({\ol{X}},  \Lambda^{\ol{w}} M_{\ol{V}}   (\ol{B} + \ol{L} + \ol{H}) \Big) \ar[rr]^>>>>>>>>>{\Lambda^{w^\pr}\pi^\pr \otimes \ol{\sigma}} & &\Lambda^{w^\pr} W^\pr \otimes H^{q-2}\Big({\ol{X}},  \II_{\ol{Z}/{\ol{X}}}  (\ol{B} + \ol{L} + \ol{H})\Big ),
}
\]
\normalsize whose surjectivity is implied by the assumption  that $\ol{W}$ carries syzygies of $\ol{B} + \ol{H}$ on ${\ol{X}}$. 
\end{proof}

\begin{remark} \label{Wbar.Eqs.Zero.Remark}
Note   that if $q =2$ and $W^\pr = W$, so that $\ol{w} = 0$, then the argument just completed works with our convention that $\ol{\sigma} = \textnormal{id}: \OO_{\ol{X}} \lra \OO_{\ol{X}}$ (in which case the surjectivity of the map on $H^{0}$ is automatic). 
\end{remark}

For the second Theorem, the plan is to deduce   additional surjectivities by ``enlarging $W$." 
\begin{proof}[Proof of Theorem \ref{Second.Technical.Theorem}] Suppose that $\pi_1 :V \twoheadrightarrow U $ is a quotient factoring $\pi$:
\[  V \overset{\pi_1} \lra U \overset{\pi_2} \lra W.\]
This gives rise to a chain of subspaces 
$ \PP(W) \subseteq \PP(U) \subseteq \PP(V)$.  Set
\[   U^\pr \ = \ \pi_1(V^\pr) \ \ , \ \ \ol{U} \ = \ U/U^\pr, \]
and write $u, u^\pr,  \ol{u}$ for the dimensions of the three vector spaces in question, so that $u = u^\pr + \ol{u}$. Assume now that $U$ satisfies two properties:
\begin{gather} \text{ The natural map } \ol{U} \lra \ol{W} \text{ is an isomorphism}; 
\label{First.Assumption.U}
 \\
\PP(U) \, \cap \, X \ = \ Z. \label{Second.Assumption.U}
\end{gather}
Once such a quotient is at hand, the constructions above -- with $U$ in place of $W$ -- give rise to maps
\[  
\eps^\# : \Lambda^u \Sigma_U \lra \II_{Z/X} \ \ , \ {\sigma}^\# : \Lambda^u M_V \lra \II_{X/ Z}.
\]
Furthermore, thanks to \eqref{First.Assumption.U}, the homomorphism
\[  \ol{\sigma}^\# : \Lambda^{\ol{u}} M_{\ol{V}} \lra \II_{Z/X} \otimes \OO_{\ol{X}}\]
determined by $  \ol{U}$ coincides with the homomorphism $\ol{\sigma} : \Lambda^{\ol{w}} M_{\ol{V}} \lra \II_{\ol{Z}/{\ol{X}}}$ coming from $\ol{W}$. Therefore Theorem \ref{First.Technical.Thm} implies that $U$ carries weight $q$ syzygies of $B$, and it then follows from \eqref{Non.Van.Hypoth.Thm.1.9}
 that
\begin{equation} \label{Which.Kpq.Non.Zero} K_{u + 1 - q,q}(X, B;L) \ \ne \ 0. \end{equation}
So it remains only to construct $U$ with the  appropriate range of dimensions. 

To this end, we consider a linear subspace $\Lambda \subseteq V$, and ask when $U =_{\text{def}} V/\Lambda$ meets the stated conditions. In order to get a factorization of $\pi$, it is necessary first of all that 
\[
\Lambda \ \subseteq \ \ker(\pi). 
\]
Once this holds, \eqref{First.Assumption.U} is equivalent to asking that 
\begin{equation}
V^\pr \, + \, \Lambda \ = \ V^\pr \, + \, \ker(\pi) \tag{*}
\end{equation}
(since the two sides are the kernels of the maps from $V$ to $\ol{U}$ and $\ol{W}$ respectively), and \eqref{Second.Assumption.U} is equivalent to the condition
\begin{equation}
\text{ The sections in  } \Lambda \ \text{generate  the sheaf } \II_{Z/X}\otimes \OO_X(L) . 
\tag{**} 
\end{equation} Now \[ \dim \big( V^\pr + \ker(\pi) \big) \ = \ v - \ol{w}, \]
so one sees that (*) will hold if $\Lambda \subseteq \ker(\pi)$ is a general subspace with 
\[ \dim \Lambda \ \ge \ \ol{v} - \ol{w}. \]
As for (**), recall  that we assume that $Z$ is a local complete intersection. Since by assumption $\ker(\pi)$ generates $\II_{Z/X}$, it is elementary (and well-known) that so too will any general subspace of $\ker(\pi)$ having $\dim \ge n+1$. Therefore, as soon as $\ol{v} - \ol{w} > n$, we can find a quotient $\pi_1 : V \lra U$ having the required properties with $u = \dim U$ taking any value in the interval 
\[ w \ \le \ u \ \le \ v^\pr + \ol{w}. \]
Combining this with \eqref{Which.Kpq.Non.Zero}, we get the assertion of the Theorem. 
\end{proof}

  %
  %
  %
  
\section{Asymptotic Non-Vanishing Theorem} \label{Asympt.Nonvan.Section}

This section is devoted to the main asymptotic non-vanishing theorem.

We start by setting notation. Let $X$ be a smooth projective variety of dimension $n \ge 2$. We fix an ample divisor $A$ on $X$, and an arbitrary divisor $P$, and put:
\begin{equation} 
\begin{gathered}
L_d \ = \ dA + P, \\ 
V_d \ = \ \HH{0}{X}{\OO_X(L_d)}  \ \ , \ \  v_d \ =_{\text{def}} \dim V_d  .\end{gathered}
\end{equation}
We will always assume in the sequel that $d$ is taken to be sufficiently large so that $L_d$ is very ample. Thus $L_d$ defines an embedding
\[    X\  \subseteq\  \PP(V_d) \ = \ \PP^{r_d}, \]
where as usual $r_d = v_d - 1$. 
Observe that Serre vanishing applies to twisting by $L_d$ for large $d$.    Note also that thanks to Riemann--Roch, $v_d$ and $r_d$ are given for $d \gg 0$ by polynomials of degree $n$ in $d$. 

The aim in this section is to establish:
\begin{theorem} \label{Main.NonVan.Statement.Section.2}
Fix an index $2 \le q \le n$, and let $B$ be a divisor on $X$.
There exist constants $C_1\, , \, C_2\, > \, 0$ with the property that if $d$ is sufficiently large then 
 \[K_{p,q}(X,B; L_d) \ \ne \ 0 \]
 whenever
 \[   C_1 \cdot d^{q-1} \ \le \ p \ \le \ r_d \, - \, C_2 \cdot d^{n-1}. \]
 If moreover $\HH{i}{X}{\OO_X(B)} = 0$ for $0 < i < n$, then  the non-vanishing holds in the range 
 \[   C_1 \cdot d^{q-1} \ \le \ p \ \le \ r_d \, - \, C_2 \cdot d^{n-q}. \]
\end{theorem}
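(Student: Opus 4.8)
The plan is to prove the first (general) statement by induction on $q$, simultaneously over smooth projective varieties of all dimensions, passing to a general hyperplane section by means of Theorem \ref{First.Technical.Thm} and then extracting the full interval of non‑vanishing from Theorem \ref{Second.Technical.Theorem}; the second (improved) statement will follow from Green's duality. The key ingredient is a secant scheme that is self‑similar under restriction. First I would fix an integer $a\gg 0$, a very ample divisor $N_0$ on $X$, and a very ample divisor $H$, chosen (compatibly with restriction to hyperplane sections) so that $K_X+(n-q+1)aA-B-N_0$ is very ample and $H$ is more positive still. Let $\Gamma\subseteq X$ be a smooth irreducible complete intersection of $n-q+1$ general members of $\linser{aA}$, so $\dim\Gamma=q-1$, and for $d\gg 0$ let $Z_d\subseteq\Gamma$ be a general member of the linear system $\big|(L_d-N_0)\mid\Gamma\big|$ — a smooth $(q-2)$‑dimensional local complete intersection in $X$ (general points on the curve $\Gamma$ when $q=2$). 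Cutting $Z_d$ out of $\Gamma$ ``one step below'' the full $L_d$‑section forces $\II_{Z_d/\Gamma}(L_d)\cong\OO_\Gamma(N_0)$ to be a \emph{fixed} very ample sheaf on $\Gamma$; from this one checks that $\II_{Z_d/X}(L_d)$ is globally generated, so that $W:=V_d\big/\HH{0}{X}{\II_{Z_d/X}(L_d)}$ satisfies $\PP(W)\cap X=Z_d$ with $\dim W$ equal to the number of conditions $Z_d$ imposes on $\linser{L_d}$, which is $O(d^{q-1})$. Moreover, chasing $0\to\II_{\Gamma/X}(B+L_d)\to\II_{Z_d/X}(B+L_d)\to\OO_\Gamma(B+N_0)\to 0$ and using Serre vanishing on $X$ for $d\gg 0$, one gets
\[
\HH{q-1}{X}{\II_{Z_d/X}(B+L_d)}\ \twoheadrightarrow\ \HH{q-1}{\Gamma}{\OO_\Gamma(B+N_0)}\ =\ \HH{0}{\Gamma}{\OO_\Gamma(K_\Gamma-B-N_0)}^{\ast}\ \neq\ 0,
\]
the last step because $K_\Gamma-B-N_0=(K_X+(n-q+1)aA-B-N_0)\mid\Gamma$ is very ample and $\Gamma\neq\varnothing$, whereas the analogous group with $B+L_d+H$ in place of $B+L_d$ vanishes since $K_\Gamma-B-H-N_0$ is then anti‑ample on $\Gamma$.

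With these facts the induction runs as follows. Restricting everything to a general $\ol X\in\linser{H}$ and invoking Serre vanishing for the relevant $H^{1}$'s, one sees that $\ol Z_d:=Z_d\cap\ol X$ is a general member of $\big|(\ol L_d-\ol N_0)\mid(\Gamma\cap\ol X)\big|$ on the fixed $(q-2)$‑dimensional subvariety $\Gamma\cap\ol X\subseteq\ol X$ — exactly the scheme our construction attaches to $\ol X$ at weight $q-1$ — and that the quotient $\ol W$ appearing in Theorem \ref{First.Technical.Thm} is the corresponding $\ol V_d\big/\HH{0}{\ol X}{\II_{\ol Z_d/\ol X}(\ol L_d)}$. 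Hence the inductive hypothesis, applied on $\ol X$ with divisor $\ol B+\ol H$ at weight $q-1$, says that $\ol W$ carries weight $q-1$ syzygies there; when $q=2$ the induction terminates, since $\ol Z_d=\varnothing$, $\ol w=0$, and the hypothesis of Theorem \ref{First.Technical.Thm} is automatic by Remark \ref{Wbar.Eqs.Zero.Remark}. Combined with the vanishing $\HH{q-1}{X}{\II_{Z_d/X}(B+L_d+H)}=0$ noted above, Theorem \ref{First.Technical.Thm} gives that $W$ carries weight $q$ syzygies of $B$; and since $Z_d$ is a local complete intersection with $\ol v_d-\ol w>n$ (immediate from Bertini and Riemann--Roch for $d\gg 0$) and $\HH{q-1}{X}{\II_{Z_d/X}(B+L_d)}\neq 0$, Theorem \ref{Second.Technical.Theorem} yields $K_{p,q}(X,B;L_d)\neq 0$ for $w+1-q\le p\le (v_d'+\ol w)+1-q$. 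As $w=O(d^{q-1})$ while $v_d'=\hh{0}{X}{L_d-H}$ differs from $r_d$ by $\ol v_d=O(d^{n-1})$, this interval contains $\big[C_1 d^{q-1},\,r_d-C_2 d^{n-1}\big]$ for suitable $C_1,C_2$, which is the first assertion.

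For the improved range, assume $\HH{i}{X}{\OO_X(B)}=0$ for $0<i<n$. By Serre vanishing and Serre duality this spreads, for $d\gg 0$, to $\HH{i}{X}{\OO_X(B+mL_d)}=0$ for all $m\in\ZZ$, so Proposition \ref{Duality} applies and gives $K_{p,q}(X,B;L_d)\cong K_{r_d-p-n,\,n+1-q}(X,\,K_X-B;\,L_d)^{\ast}$. When $2\le q\le n-1$ the index $n+1-q$ again lies in $[2,n]$; applying the first assertion (already proved, for every index) to the divisor $K_X-B$ at weight $n+1-q$ makes the dual group nonzero for $p'\ge C_1' d^{n-q}$, hence $K_{p,q}(X,B;L_d)\neq 0$ for $p$ up to $r_d-n-C_1' d^{n-q}$; splicing this with the interval from the previous paragraph gives $\big[C_1 d^{q-1},\,r_d-C_2 d^{n-q}\big]$. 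For $q=n$ one argues identically, using instead the non‑vanishing of $K_{p',1}(X,\,K_X-B;\,L_d)$ established in \S\ref{Kp1.Section}.

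The step I expect to be the main obstacle is verifying that the single scheme $Z_d$ meets all the competing demands at once: that $\II_{Z_d/X}(L_d)$ is globally generated (not obvious, since $Z_d$ is only a divisor inside the lower‑dimensional $\Gamma$, and one must exhibit enough global sections of $\OO_X(L_d)$ vanishing on it and in the ``$N_0$‑direction'' transverse to $\Gamma$); that $Z_d$ imposes exactly $\Theta(d^{q-1})$ conditions on $\linser{L_d}$, which pins the left endpoint; the precise non‑vanishing/vanishing dichotomy for $\HH{q-1}{X}{\II_{Z_d/X}(B+L_d)}$ against its twist by $H$; and — for the induction to close — the restriction $Z_d\cap\ol X$ being of exactly the same shape on $\ol X$. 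Balancing the choices of $a$, $N_0$, and $H$ so that $(K_X+(n-q+1)aA-B-N_0)\mid\Gamma$ is very ample while $N_0$ itself stays very ample, and checking the auxiliary vanishings ($\HH{1}{X}{\II_{\Gamma/X}(L_d)}=0$, surjectivity of the various restriction maps) that make the self‑similarity exact, is where the real work lies.
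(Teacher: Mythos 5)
Your overall strategy is essentially the paper's: realize the secant scheme $Z_d$ as a divisor of a linear system of the form $\linser{L_d - (\text{fixed})}$ inside a fixed $(q-1)$-dimensional complete intersection, use adjunction to make the construction self-similar under restriction to a general $\ol{X} \in \linser{H}$, feed this into Theorems \ref{First.Technical.Thm} and \ref{Second.Technical.Theorem}, and dualize for the improved range. (The paper cuts out the ambient $(q-1)$-fold by divisors in $\linser{H}$ and one in $\linser{H + B - K_X}$ rather than your $\linser{aA}$-divisors and auxiliary $N_0$; that difference is cosmetic.) But your base case $q=2$ has a genuine gap: you assert $\ol{w}=0$, i.e.\ $W'=W$, so that the hypothesis of Theorem \ref{First.Technical.Thm} becomes automatic via Remark \ref{Wbar.Eqs.Zero.Remark}. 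Since $\ol{Z}_d=\varnothing$, this amounts to claiming that the restriction map
\[
\HH{0}{X}{\II_{Z_d/X}(L_d)} \lra \HH{0}{\ol{X}}{\OO_{\ol{X}}(\ol{L}_d)}
\]
is surjective. It is not in general: its cokernel injects into $\HH{1}{X}{\II_{Z_d/X}(L_d-H)}$, which the Koszul resolution of $\II_{Z_d/X}$ shows to be bounded independently of $d$ but typically nonzero (for instance on a surface the top Koszul term contributes a group dual to $\HH{0}{X}{\OO_X(B+H)}$, up to the bookkeeping of your $N_0$ and $aA$). So $\ol{W}_d$ is a trivial quotient of $M_{\ol{L}_d}$ of bounded but \emph{positive} rank, and the hypothesis of Theorem \ref{First.Technical.Thm} at $q=2$ — surjectivity on $H^0$ of the map induced by $\Lambda^{\ol{w}_d}$ of $M_{\ol{L}_d}\to \ol{W}_d\otimes\OO_{\ol{X}}$ — is a real statement requiring proof. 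The paper supplies it as Proposition \ref{Surjectivity.H0.Lemma}, resolving the relevant exterior power by an Eagon--Northcott complex and using Serre vanishing together with the boundedness of the rank. Without this ingredient your induction never starts.

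The same non-surjectivity also infects your inductive step for $q\ge 3$: the quotient $\ol{W}$ that actually appears in Theorem \ref{First.Technical.Thm} is $\ol{V}_d/\ol{J}_d$ with $\ol{J}_d$ the image of $\HH{0}{X}{\II_{Z_d/X}(L_d)}$, which is in general only a subspace of bounded codimension in $\HH{0}{\ol{X}}{\II_{\ol{Z}_d/\ol{X}}(\ol{L}_d)}$ — not "exactly the scheme our construction attaches to $\ol X$." The fix is to build this flexibility into the inductive statement from the outset (the paper's notion of data \emph{adapted to} $B$, which allows $J_d\subseteq J_{0,d}$ of bounded codimension); the resulting bounded rank of $\ol{W}_d$ is then precisely what Proposition \ref{Surjectivity.H0.Lemma} needs. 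The rest of your proposal — the dichotomy $\HH{q-1}{X}{\II_{Z_d/X}(B+L_d)}\ne 0$ versus vanishing after twisting by $H$, the $O(d^{q-1})$ estimate for $\dim W$, global generation of $\II_{Z_d/X}(L_d)$, and the duality argument for the improved range — is sound and parallels the paper.
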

\noi The statement of the Theorem also holds  when $q = 1$ (Proposition \ref{Kp1.Proposition}).

 \begin{remark} \label{Zhou.Remark}
 We note that Zhou \cite{Zhou} has recently been able to adapt the argument below to show that the Theorem remains true even if $X$ is singular. He also relaxes the assumption on $B$. 
 \end{remark}

The plan naturally enough is to prove the theorem inductively on $n$, using Theorems \ref{First.Technical.Thm}
 and \ref{Second.Technical.Theorem}. In the next subsection, we construct and study  the various secant linear spaces and subschemes to which we will apply those results.


\subsection*{Constructions}  It will be convenient to first produce an appropriate subscheme $ Z_d \subseteq X$, and subsequently to  realize $Z_d$ as the intersection of $X$ with a linear subspace $\PP(W_d)$ of $\PP(V_d)$.\footnote{Our choice of $Z$ was inspired by the argument of Eisenbud et.\ al.\ in     \cite[Proof of Proposition 3.2]{EGHP}.}

As above, we suppose that $B$ is a fixed divisor on $X$. We begin by choosing a very ample divisor  $H$ on $X$ with the properties that:
   \begin{gather}
 H + B - K_X \ \text{ is very ample; } 
\label{First.Cond.Eqn} \\
\HH{i}{X}{\OO_X(mH   + B )} \ = \ 0 \ \ \text{for } i > 0 \  , \ m \ge 1;   \label{Second.Cond.Eqn}
\\
 \HH{i}{X}{\OO_X(K_X + mH)} \ = \ 0\ \  \text{for } i > 0 \  , \ m \ge 1.  \label{Third.Cond.Eqn}
\end{gather}
For the purposes of a future induction, we fix at this point smooth irreducible divisors
\[  \ol{X}_1  \, ,\, \ldots \, , \, \ol{X}_n \ \in \ \linser{H} \]
that meet transversely. 
Next, set $ c = n +2 - q$ and assume that $d$ is sufficiently large so that $L_d - (c-1)H$ is very ample. Pick  divisors
\begin{equation} \label{Def.of.Di.Eqn}
\begin{gathered}
D_1 \ , \ D_2 \ , \ \ldots \ , \ D_{c-2} \ \in \ \linser{ H } \\ 
D_{c-1} \ \in \ \linser{H + B - K_X } 
\ \ , \ \ 
D_c \ \in \ \linser{ L_d - (c-1) H } \end{gathered}
\end{equation}
in such a way that $\sum \ol{X}_j + \sum D_i$ has simple normal crossings, and  let 
\begin{equation} \label{Def.of.Z.Eqn}
Z \ = \ Z_d \ = \ D_1 \, \cap \, \ldots \, \cap D_c 
\end{equation}
be the complete intersection of the $D_i$. Thus $Z$ is smooth, with $\dim Z = n -c = q-2$. This will be the secant subvariety with which we work. 
 Our choice of the $D_i$ is arranged so that 
\begin{equation}
D_1 + \ldots + D_c \ \lin \ L_d + B - K_X, \label{Sum.of.Di.Eqn} \end{equation}
and so that only the last divisor $D_c$ involves $L_d$. We always take $d$ to be large enough so that each $\OO_X(L_d - D_i)$ is base-point free. Denoting  by $ \II_{Z_d/X}$ the ideal sheaf of $Z_d$ in $X$, this implies that $\II_{Z_d/X} \otimes \OO_X(L_d)$ is globally generated.

The next step is to define a quotient $V_d \twoheadrightarrow W_d$ whose projectivization $\PP(W_d) \subseteq \PP(V_d)$ meets $X$ along $Z_d$. To this end, let
\begin{equation} \label{Def.J1.Eqn}
J_{0,d} \ = \ \HH{0}{X}{\II_{Z_d/X} \otimes \OO_X(L_d)} ,
\end{equation}
and put
$W_{0,d} =V_d/ J_{0,d}$.
Thus
\[  W_{0,d} \ = \ \Image \Big( \HH{0}{X}{\OO_X(L_d)} \lra \HH{0}{Z_d}{\OO_{Z_d}(L_d)}  \Big ). \]
Moreover,  the global generation of $\II_{Z_d/X}(L_d)$ implies that   $\PP(W_{0,d}) \subseteq \PP(V_d)$ is a subspace whose intersection with $X$ is precisely $Z_d$. In principle, we would like to apply the constructions of the previous section with this subspace. However in order for the induction to run smoothly, it is convenient to work instead with a possibly larger linear space in which the codimension of  $\PP(W_{0,d})$ is bounded independently of $d$. 

Suppose then that for every sufficiently large $d$, one chooses (or is given)  a subspace $J_d \subseteq J_{0,d}$ that satisfies:  \begin{equation}
\label{First.Cond.Wd}
\text{The sections in $J_d$ generate $\II_{Z_d/X} \otimes \OO_X(L_d)$;}
\end{equation}
\begin{equation} \label{Second.Cond.Wd}
\dim \big ( \, J_{0,d} / J_d \, \big) \ \le \ a, 
\end{equation}
where $a$ is some constant independent of $d$. Set
\begin{equation}  \label{Def.Wd.Eqn}
W_d \ = \ V_d / J_d \ \ , \ \ w_d \ =_{\text{def}} \ \dim W_d,  \end{equation}
and in accordance with the notation in \S \ref{Secant.Constructions.Section}, write
\begin{equation} \label{Def.pid}
  \pi_d : V_d \lra W_d
\end{equation}
for the canonical map.
The first condition \eqref{First.Cond.Wd}
guarantees that  $\PP(W_d) \cap X = Z_d$ inside $\PP(V_d)$. 

It will be useful to have some terminology for the various constructions and conditions  just introduced. 
\begin{definition}  \label{Def.Adapted}
We say that the data $H, \ol{X}_j, Z_d$ and $W_d$ are \textit{adapted to $B$} if:
\begin{itemize}
\item[(i).] $H$ satisfies the conditions
 \eqref{First.Cond.Eqn} -- \eqref{Third.Cond.Eqn};
 \vskip 5pt
\item[(ii).] Each $Z_d$ (for large $d$) is constructed as in \eqref{Def.of.Di.Eqn} and \eqref{Def.of.Z.Eqn};
 \vskip 5pt
\item[(iii).] Every $W_d$ (for large $d$) arises from a subspace $J_d $ that satisfies \eqref{First.Cond.Wd} and \eqref{Second.Cond.Wd}. 
\end{itemize} 
 \end{definition}
\noi We will prove in the next subsection that if the data are adapted to $B$, and if $d$ is sufficiently large, then $W_d$ carries weight $q$ syzygies of $B$ with respect to $L_d$ (in the sense of Definition \ref{Def.Plane.Carries.Syzygies}).
 For now we study these constructions a little further.

The first point is to work out the cohomological properties of the ideal sheaf $\II_{Z_d/X}$.
\begin{lemma} \label{Cohomol.Properties.Lemma} \label{Cohom.Properties.Ideal.Lemma}
 Assume that $H$, $ Z_d $, and $ W_d  $ are adapted to $B$. If $d$ is sufficiently large, then:
 \item[(i).] One has 
\begin{equation} \label{Higher.Cohom.Twists.Ideal.Z}
\begin{gathered}
\Hh{q-1}{\II_{Z_d/X} \otimes \OO_X(B + L_d)} \ \ne 0 \\  \Hh{q-1}{\II_{Z_d/X} \otimes \OO_X(B +L_d +H)} \ = \ 0. 
\end{gathered}
\end{equation}
\item[(ii).] For $i > 0$ the dimensions
\begin{equation} \dim \HH{i}{X}{\II_{Z_d/X} \otimes \OO_X(L_d)}  \ \ \text{and} \ \ \dim \HH{i}{X}{\II_{Z_d/X} \otimes \OO_X(L_d -H)}
\end{equation}
are bounded above independently of $d$. 
 \end{lemma}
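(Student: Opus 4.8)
The plan is to deduce everything from the Koszul complex of the complete intersection $Z_d = D_1\cap\cdots\cap D_c$, where $c = n+2-q$. The feature of the construction that makes this work is the linear equivalence \eqref{Sum.of.Di.Eqn}, namely $D_1+\cdots+D_c \lin L_d + B - K_X$: after twisting the Koszul resolution of $\II_{Z_d/X}$ by $\OO_X(B+L_d)$ its left-most term becomes exactly $\OO_X(K_X)$, and it is the one-dimensional group $\HH{n}{X}{\OO_X(K_X)}$ that will produce the nonvanishing in (i). Since $c$ is a fixed constant (once $q$ is fixed), the complex has a bounded number of terms, so ``for $d$ sufficiently large'' may be taken uniformly throughout.

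First I would write the twisted Koszul resolution
\[
0 \lra \mathcal{K}_c \lra \mathcal{K}_{c-1}\lra\cdots\lra\mathcal{K}_1\lra \II_{Z_d/X}(B+L_d)\lra 0,\qquad \mathcal{K}_j \ = \ \bigoplus_{|I|=j}\OO_X\Big(B+L_d-\sum_{i\in I}D_i\Big),
\]
so that $\mathcal{K}_c = \OO_X(K_X)$, and break it into short exact sequences $0\to\mathcal{M}_{j+1}\to\mathcal{K}_j\to\mathcal{M}_j\to 0$ $(1\le j\le c-1)$ with $\mathcal{M}_1 = \II_{Z_d/X}(B+L_d)$ and $\mathcal{M}_c=\mathcal{K}_c$. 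The central point --- and the only place the hypotheses \eqref{First.Cond.Eqn}--\eqref{Third.Cond.Eqn} on $H$ are used --- is the claim that for $d$ large one has $\HH{i}{X}{\mathcal{K}_j}=0$ for all $i>0$ and all $1\le j\le c-1$. Indeed, a summand of $\mathcal{K}_j$ is $\OO_X(B+L_d-\sum_{i\in I}D_i)$ with $|I|=j<c$; if $D_c$ is not among the $D_i$, $i\in I$, then $\sum_{i\in I}D_i$ is a sum of at most $c-1$ divisors lying in $\linser{H}$ and $\linser{H+B-K_X}$, so the twist is $L_d$ plus a fixed divisor and its higher cohomology vanishes for $d\gg 0$ by Serre vanishing; if $D_c$ is among them the $L_d$'s cancel (as $D_c\in\linser{L_d-(c-1)H}$) and the twist is, according to whether $D_{c-1}$ is also subtracted, of the form $\OO_X(B+mH)$ or $\OO_X(K_X+mH)$ with $m\ge 1$, whose higher cohomology vanishes by \eqref{Second.Cond.Eqn} or \eqref{Third.Cond.Eqn}. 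Granting the claim, the long exact sequences telescope to isomorphisms $\HH{q-1}{X}{\mathcal{M}_1}\cong\HH{q}{X}{\mathcal{M}_2}\cong\cdots\cong\HH{q+c-2}{X}{\mathcal{M}_c}$; as $q+c-2=n$ and $\mathcal{M}_c=\OO_X(K_X)$, this yields $\HH{q-1}{X}{\II_{Z_d/X}(B+L_d)}\cong\HH{n}{X}{\OO_X(K_X)}=k\ne 0$. For the vanishing half of (i) I would repeat the argument after twisting by $\OO_X(B+L_d+H)$: the left-most term is now $\OO_X(K_X+H)$, whose positive-degree cohomology vanishes by \eqref{Third.Cond.Eqn} with $m=1$, and the intermediate terms behave as before, so the telescope gives $\HH{q-1}{X}{\II_{Z_d/X}(B+L_d+H)}\cong\HH{n}{X}{\OO_X(K_X+H)}=0$.

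For (ii) I would run the same Koszul complex twisted by $\OO_X(L_d)$ and by $\OO_X(L_d-H)$. Now the left-most term is the $d$-independent sheaf $\OO_X(K_X-B)$, resp.\ $\OO_X(K_X-B-H)$, and each intermediate summand is again either $L_d$ plus a fixed divisor (higher cohomology zero for $d\gg 0$) or a $d$-independent sheaf of the shape $\OO_X(mH)$ or $\OO_X(mH+K_X-B)$ with $m\ge 0$ --- where now one does \emph{not} need any vanishing, only that the dimension is independent of $d$. Chasing the short exact sequences gives, for each $i>0$, a bound $\dim\HH{i}{X}{\II_{Z_d/X}(L_d)}\le\sum_{k\ge 0}\dim\HH{i+k}{X}{\mathcal{K}_{k+1}}$ (and similarly with $L_d-H$), whose right-hand side is a finite sum of terms each equal to $0$ or to a constant independent of $d$; this proves (ii). I expect the only real difficulty to be the routine bookkeeping in the vanishing claim for $\HH{i}{X}{\mathcal{K}_j}$: one must track which subsets $I$ contain the index $c$ and, among those, which also contain $c-1$, so as to recognize each twist as $\OO_X(mH)$, $\OO_X(B+mH)$, or $\OO_X(K_X+mH)$ with $m\ge 1$ and invoke the appropriate one of \eqref{Second.Cond.Eqn}, \eqref{Third.Cond.Eqn}; beyond this there is nothing subtle.
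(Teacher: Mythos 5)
Your proposal is correct and follows essentially the same route as the paper: both resolve $\II_{Z_d/X}$ by the Koszul complex on $\oplus\,\OO_X(-D_i)$, use \eqref{Sum.of.Di.Eqn} to identify the top term with $\OO_X(K_X)$ (resp.\ $\OO_X(K_X+H)$ after the extra twist), kill the intermediate terms via Serre vanishing together with \eqref{Second.Cond.Eqn}--\eqref{Third.Cond.Eqn}, and for (ii) observe that each summand is either $d$-independent or has vanishing higher cohomology for $d\gg 0$. Your telescoping of short exact sequences is just the hands-on version of the spectral-sequence argument the paper invokes.
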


\begin{proof}
Put 
\[ E \ = \ \bigoplus_{i =1}^c \, \OO_X(-D_i), \]  so that $\II_{Z_d/X}$ is resolved by the Koszul complex whose $j^{\text{th}}$ term is $\Lambda^jE$. We use this resolution to compute the cohomology of $\II_{Z_d/X}$ and its twists.  For the first assertion of (i) it suffices to show that
\begin{gather*}
\HH{n}{X}{\Lambda^c E\otimes \OO_X(B + L_d)} \ \ne \ 0,  \tag{*} \\
\HH{i}{X}{\Lambda^jE \otimes \OO_X(B + L_d)} \ = \ 0 \ \text{ for } i > 0 \ , \ j < c . \tag{**}
\end{gather*}
Thanks to \eqref{Sum.of.Di.Eqn} one has $\Lambda^c E \otimes \OO_X(B +L_d)= \OO_X(K_X)$, which implies (*). As for (**), note that if $j < c$, then $\Lambda^jE$ is a direct sum of twists of $\OO_X(K_X)$ by line bundles of the form $\OO_X(mH)$ for $m \ge 1$, as well as possibly one or both of
\[
\OO_X( H + B - K_X) \ \ , \ \ \OO_X(L_d - (c-1)H).
\]
Summands involving $L_d$ can be made to have vanishing cohomology by taking $d \gg 0$, while the remaining terms are covered by \eqref{Second.Cond.Eqn} and \eqref{Third.Cond.Eqn}. This proves the first assertion of (i), and the second is similar.  Turning to  (ii), observe that the line bundle summands appearing in $\Lambda^j E \otimes\OO_X( L_d)$ and $\Lambda^j E \otimes \OO_X(L_d-H)$ either involve only $K_X$, $B$ and $H$, or else can be made to have vanishing higher cohomology by taking $d$ large. Thus for $i > 0$ the dimension of each of the groups
\begin{equation}
\HH{i}{X}{\Lambda^j E \otimes \OO_X(L_d)} \ \ \text{and} \ \ \HH{i}{X}{\Lambda^j E \otimes\OO_X( L_d-H)} \
\notag \end{equation}
 is  independent of $d$ (when $d \gg 0$). Statement (ii) then follows from the spectral sequence relating the cohomology of $\II_{Z_d/X} \otimes \OO_X(L_d)$ to the cohomology of the terms of a resolution.  \end{proof}

 In order to apply Theorem \ref{Second.Technical.Theorem}, it will be important to estimate $w_d = \dim W_d$. The next statement shows that the dimension in question grows like $d^{q-1}$. 
\begin{lemma} \label{Estimate.wd.Lemma}
There is a polynomial $Q(d)$ of degree $q-1$ in $d$ such that the difference
\[   \mid \dim W_{d}  - Q(d) \mid  \] is bounded.
\end{lemma}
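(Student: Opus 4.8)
The plan is to compare $\dim W_d$ with the Euler characteristic $\chi\big(Z_d,\OO_{Z_d}(L_d)\big)$ through a short chain of steps, each of which alters the quantity by an amount bounded independently of $d$, and then to recognize that Euler characteristic as a polynomial of degree $q-1$ in $d$.

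First I would reduce to estimating $\dim W_{0,d}$. Since $\dim\big(J_{0,d}/J_d\big)\le a$ with $a$ independent of $d$ by \eqref{Second.Cond.Wd}, we have $\big|\dim W_d - \dim W_{0,d}\big|\le a$. Now $W_{0,d}$ is the image of the restriction map $\HH{0}{X}{\OO_X(L_d)}\to\HH{0}{Z_d}{\OO_{Z_d}(L_d)}$, so the exact sequence $0\to\II_{Z_d/X}(L_d)\to\OO_X(L_d)\to\OO_{Z_d}(L_d)\to 0$ gives $0\le \hh{0}{Z_d}{\OO_{Z_d}(L_d)}-\dim W_{0,d}\le\hh{1}{X}{\II_{Z_d/X}(L_d)}$, and the right-hand side is bounded independently of $d$ by Lemma \ref{Cohomol.Properties.Lemma}(ii). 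So it suffices to show that $\hh{0}{Z_d}{\OO_{Z_d}(L_d)}$ stays within a bounded distance of a degree $q-1$ polynomial in $d$.

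Next I would pass from $h^0$ to $\chi$ using a $d$-independent intermediate variety. Writing $c=n+2-q$, the divisors $D_1,\dots,D_{c-1}$ of \eqref{Def.of.Di.Eqn} may be fixed once and for all, independently of $d$; let $Y=D_1\cap\cdots\cap D_{c-1}$, a smooth projective variety of dimension $n-(c-1)=q-1$. Then $Z_d\subset Y$ is the divisor of a section of $\OO_Y\big(L_d-(c-1)H\big)$, and the sequence $0\to\OO_Y\big((c-1)H\big)\to\OO_Y(L_d)\to\OO_{Z_d}(L_d)\to 0$ together with Serre vanishing $\HH{i}{Y}{\OO_Y(L_d)}=0$ for $i>0$ and $d\gg 0$ shows that $\hh{i}{Z_d}{\OO_{Z_d}(L_d)}\le\hh{i+1}{Y}{\OO_Y((c-1)H)}$ for $i>0$ --- a bound independent of $d$ --- whence $\big|\hh{0}{Z_d}{\OO_{Z_d}(L_d)}-\chi(Z_d,\OO_{Z_d}(L_d))\big|$ is bounded. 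Finally, taking Euler characteristics in that same sequence yields $\chi(Z_d,\OO_{Z_d}(L_d))=\chi(Y,\OO_Y(L_d))-\chi(Y,\OO_Y((c-1)H))$; the second term is constant, and by Riemann--Roch on the $(q-1)$-dimensional $Y$ the first is a polynomial in $d$ of degree $q-1$, with leading coefficient $\tfrac{1}{(q-1)!}\big(A^{q-1}\cdot H^{c-2}\cdot(H+B-K_X)\big)>0$ since $A$ is ample and $H$, $H+B-K_X$ are very ample (see \eqref{First.Cond.Eqn}). One then takes $Q(d)=\chi(Y,\OO_Y(L_d))-\chi(Y,\OO_Y((c-1)H))$ and combines the three bounded-error comparisons.

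I expect the only genuinely delicate point to be the middle step: computed directly on $X$ via the Koszul resolution of $\II_{Z_d/X}$, the Euler characteristic $\chi(Z_d,\OO_{Z_d}(L_d))$ looks a priori like a polynomial of degree $n$ in $d$, and one must see that all contributions of degree exceeding $q-1$ cancel. Rewriting it as $\chi(Y,\OO_Y(L_d))-\chi(Y,\OO_Y((c-1)H))$ --- i.e.\ using only the last divisor $D_c$ to cut $Z_d$ out of the $d$-independent variety $Y$ --- makes the cancellation automatic, which is precisely why the construction was arranged so that $L_d$ enters only through $D_c$.
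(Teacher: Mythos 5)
Your proof is correct and follows essentially the same route as the paper's: reduce to $W_{0,d}$ via \eqref{Second.Cond.Wd}, control the gap to $\hh{0}{Z_d}{\OO_{Z_d}(L_d)}$ by Lemma \ref{Cohomol.Properties.Lemma}(ii), and then realize $Z_d$ as a divisor in the $d$-independent $(q-1)$-dimensional variety $Y=D_1\cap\cdots\cap D_{c-1}$ so that Riemann--Roch on $Y$ produces the degree $q-1$ polynomial. Your detour through $\chi(Z_d,\OO_{Z_d}(L_d))$ is a slightly more explicit packaging of the same comparison the paper makes directly between $\hh{0}{Z_d}{\OO_{Z_d}(L_d)}$ and $\hh{0}{Y}{\OO_Y(L_d)}$.
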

\begin{proof}
Thanks to \eqref{Second.Cond.Wd}, it is enough to produce $Q(d)$ such that $ |\dim W_{0,d} - Q(d)|$ is bounded. To this end, let 
\[
Y  \ = \ D_1 \cap \ldots \cap D_{c-1}. 
\]
This is a smooth variety of dimension $n - c + 1= q-1$, and $Z_d$ is realized in $Y$ as the divisor of a section of $\OO_Y\big(L_d - (c-1)H\big)$. The exact sequence
\[
0 \lra \OO_{Y}((c-1)H) \lra \OO_Y(L_d) \lra \OO_{Z_d}(L_d) \lra 0 
\]
shows that $\hh{0}{Z_d}{\OO_{Z_d}(L_d)}$  and $\hh{0}{Y}{\OO_Y(L_d)}$ have bounded difference as functions of $d$, while thanks to Riemann-Roch $\hh{0}{Y}{\OO_Y(L_d)}$ is given for $d \gg 0$ by a polynomial $Q(d)$ of degree $  q-1= \dim Y$. Finally, note that 
\[ \dim \HH{0}{Z_d}{\OO_{Z_d}(L_d )} \ - \ \dim W_{0,d}   \ =\ \dim \HH{1}{X}{\II_{Z_d/X} \otimes \OO_X(L_d) } \]
provided that $d$ is sufficiently large so that $\HH{1}{X}{\OO_X(L_d)} = 0$. But   Lemma \ref{Cohom.Properties.Ideal.Lemma} (ii) implies that the term on the right is bounded, and the Lemma follows. 
\end{proof}

Finally, 
 let
\begin{equation}
M_{L_d} \ = \ \ker \Big ( \HH{0}{X}{\OO_X(L_d)} \otimes_{\bf{k}} \OO_X \lra \OO_X(L_d) \Big) 
\end{equation}
be the vector bundle \eqref{Seq.Def.M_V} arising as the kernel of the evaluation map. The following   result will be used to start the induction in the next subsection.\footnote{Strictly speaking, we will apply this on a hyperplane section of $X$ rather than on $X$ itself, but it seems cleanest to present the Proposition here.}

\begin{proposition}  \label{Surjectivity.H0.Lemma} Suppose we are given for every sufficiently large $d$ a trivial quotient 
\[   s_d:  M_{L_d} \lra T_d \otimes_k \OO_X \lra 0\]
of $M_d$, where $T_d$  is a vector space of dimension 
\[ t_d \ =\  \dim T_d \ \le \ c\] bounded above independent of $d$.  Then for any fixed divisor $C$, the homomorphism
\small
\[
\HHH{0}{X}{\Lambda^{t_d} M_{L_d}  \otimes \OO_X(L_d + C)} \lra \HHH{0}{X}{\Lambda^{t_d} T_d \otimes \OO_X(L_d + C)}  \, = \, \HH{0}{X}{\OO_X(L_d + C)}
\]
\normalsize
determined by $\Lambda^{t_d} s_d$ is surjective for all $d \gg 0$. \end{proposition}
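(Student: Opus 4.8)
The plan is to run an induction on $t = t_d$, at each step trading a rank--$t$ trivial quotient for a rank--$(t-1)$ one, and to arrange matters so that only \emph{bounded} exterior powers of bundles of the form $M_{V'}$ —\,for basepoint--free subseries $V'\subseteq \HH{0}{X}{\OO_X(L_d)}$ of bounded codimension\,— ever intervene. Concretely, for $0\le t$ and for a constant $c_0$ let $(\mathrm P_t)$ be the assertion: \emph{for $d\gg 0$, every basepoint--free $V'\subseteq\HH{0}{X}{\OO_X(L_d)}$ with $\codim V'\le c_0$, and every surjection $s\colon M_{V'}\twoheadrightarrow T\otimes_k\OO_X$ with $\dim T=t$, the map $\Lambda^{t}s$ induces a surjection $\HH{0}{X}{\Lambda^t M_{V'}\otimes\OO_X(L_d+C)}\to\HH{0}{X}{\OO_X(L_d+C)}$.} Taking $c_0$ a bit larger than $c$, the Proposition is the case $V'=V_d$ of $(\mathrm P_{t_d})$. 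The base case $(\mathrm P_0)$ is trivial. Throughout one uses that, for $d\gg0$, $\OO_X(L_d)$ satisfies Serre vanishing and property $N_{p}$ for any fixed $p$ —\,and, with a single threshold depending on $p$ and $c_0$, so do the basepoint--free subseries of codimension $\le c_0$\,—\,so that $\HH{1}{X}{\Lambda^k M_{V'}\otimes\OO_X(L_d+C)} = K_{k-1,2}(X,C;V')=0$ for all $1\le k\le c_0$, by Proposition~\ref{Non.Van.Wedge.p+1.Gives.Koszul}.

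For the inductive step, fix $s\colon M_{V'}\twoheadrightarrow T\otimes\OO_X$ with $\dim T=t\ge 1$. Choose $0\ne e^{*}\in T^{*}$, put $T'=\ker e^{*}$, fix a splitting $T=T'\oplus\langle e\rangle$, and set $m=e^{*}\circ s\colon M_{V'}\to\OO_X$ and $s'=(\mathrm{pr}_{T'})\circ s\colon M_{V'}\to T'\otimes\OO_X$; both are surjective, since a projection of a fibrewise surjective map is fibrewise surjective. Expanding a determinant along the "$e$"--row shows that $\Lambda^{t}s$ factors as $\Lambda^{t}M_{V'}\xrightarrow{\ \iota_m\ }\Lambda^{t-1}M_{V'}\xrightarrow{\ \Lambda^{t-1}s'\ }\OO_X$ (under $\Lambda^{t-1}T'\otimes\langle e\rangle\cong\det T$), where $\iota_m$ is contraction with $m$. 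Since $m$ is surjective with kernel $K_m=\ker m$, one has the exact sequence $0\to\Lambda^{t}K_m\to\Lambda^{t}M_{V'}\xrightarrow{\iota_m}\Lambda^{t-1}K_m\to 0$, and $\Lambda^{t-1}s'$ restricted to $\Lambda^{t-1}K_m$ is $\Lambda^{t-1}(s'|_{K_m})$, so that $\Lambda^{t}s=\Lambda^{t-1}(s'|_{K_m})\circ\iota_m$. Moreover $s'|_{K_m}\colon K_m\to T'\otimes\OO_X$ is again surjective, with kernel $\ker s$, so $K_m$ together with $s'|_{K_m}$ is precisely the kind of datum to which $(\mathrm P_{t-1})$ will apply —\,provided $K_m$ is itself an $M$--bundle of a good subseries.

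That last point is where the geometry enters. For $d\gg0$ one has $\HH{1}{X}{\OO_X(-L_d)}=0$ (Serre duality, using $\dim X\ge2$), so $m$ extends uniquely to a constant map $V'\to\OO_X$, i.e. to a hyperplane $V''\subseteq V'$, and then $K_m=M_{V'}\cap(V''\otimes\OO_X)=\ker\!\big(\ev_{V''}\colon V''\otimes\OO_X\to\OO_X(L_d)\big)$ inside \eqref{Seq.Def.M_V}. Because $s$, hence $m$, is fibrewise surjective, one checks that $\ev_x\notin (V'')^{\perp}$ for every $x\in X$; thus $V''$ is basepoint--free and $K_m\cong M_{V''}$, with $\codim V''\le\codim V'+1\le c_0$. (When $\dim X=1$ this identification can fail and that case is handled by a direct computation.) Now assume $(\mathrm P_{t-1})$. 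From $\HH{1}{X}{\Lambda^{t}M_{V''}\otimes\OO_X(L_d+C)}=0$ and the exact sequence above (twisted by $\OO_X(L_d+C)$) we get that $\iota_m$ is surjective on global sections, $\HH{0}{X}{\Lambda^{t}M_{V'}\otimes\OO_X(L_d+C)}\twoheadrightarrow \HH{0}{X}{\Lambda^{t-1}K_m\otimes\OO_X(L_d+C)}=\HH{0}{X}{\Lambda^{t-1}M_{V''}\otimes\OO_X(L_d+C)}$; and $(\mathrm P_{t-1})$ applied to $s'|_{K_m}\colon M_{V''}\to T'\otimes\OO_X$ gives that $\Lambda^{t-1}(s'|_{K_m})$ is surjective on global sections. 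Composing, and using $\Lambda^{t}s=\Lambda^{t-1}(s'|_{K_m})\circ\iota_m$, we obtain $(\mathrm P_t)$, completing the induction.

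The step I expect to be the real obstacle is the uniform asymptotic statement invoked twice above: that for a fixed $p_0$ there is a single $d_0=d_0(p_0,c_0)$ so that \emph{every} basepoint--free subseries $V'\subseteq\HH{0}{X}{\OO_X(dA+P)}$ of codimension $\le c_0$ satisfies $N_{p_0}$ once $d\ge d_0$ —\,equivalently $K_{p,q}(X,C;V')=0$ for $q\ge2$, $p\le p_0$\,— together with the bookkeeping that the auxiliary hyperplanes $V''$ produced in the induction really are basepoint--free with codimension growing by at most one per step. This can be extracted by comparing $M_{V'}$ with $M_{L_d}$ (they differ by a trivial bundle of rank $\le c_0$, and the relevant vanishings propagate in bounded homological degree via \eqref{Wedge.of.M}), but it is the part that must be set up carefully so that the induction closes without circularity. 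The residual cohomological chasing, and the low--dimensional case $\dim X=1$, are routine by comparison.
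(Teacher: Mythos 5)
Your algebraic skeleton is sound and rather elegant: the factorization $\Lambda^{t}s=\Lambda^{t-1}(s'|_{K_m})\circ\iota_m$, the exact sequence $0\to\Lambda^{t}K_m\to\Lambda^{t}M_{V'}\to\Lambda^{t-1}K_m\to0$, and the identification $K_m\cong M_{V''}$ for a basepoint-free hyperplane $V''\subseteq V'$ are all correct (when $\dim X\ge 2$). But the argument has a genuine gap exactly where you flag it, and the gap is not a technical bookkeeping point -- it is the entire content. To close each inductive step you need $\HH{1}{X}{\Lambda^{t}M_{V''}\otimes\OO_X(L_d+C)}=0$, i.e.\ $K_{t-1,2}(X,C;V'')=0$, \emph{uniformly over all basepoint-free subseries of codimension $\le c_0$}. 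This is an $N_p$-type vanishing for incomplete linear series, which is at least as hard as the Proposition itself, and your proposed mechanism for it does not work as stated: from $0\to M_{V''}\to M_{L_d}\to (V_d/V'')_X\to 0$ the bundle $\Lambda^{t}M_{V''}$ is the deepest \emph{sub}object of the filtration of $\Lambda^{t}M_{L_d}$, so vanishing of $H^1(\Lambda^{j}M_{L_d}\otimes -)$ does not descend to $H^1(\Lambda^{t}M_{V''}\otimes -)$ without surjectivity statements on $H^0$ of precisely the kind you are trying to prove -- this is the circularity you worry about, and it is not resolved. (Even for $t=1$ the needed statement is surjectivity of $V''\otimes\HH{0}{X}{\OO_X(L_d+C)}\to\HH{0}{X}{\OO_X(2L_d+C)}$ for \emph{every} basepoint-free $V''$ of bounded codimension; the Castelnuovo--Mumford-type argument for this requires $\HH{1}{X}{\OO_X(C)}=0$, which is not available for the twists $C=\ol{B}+\ol{H}$ arising in the application.) A second, smaller gap: the case $\dim X=1$, which you defer to "a direct computation," is not optional -- the Proposition is applied on hyperplane sections $\ol{X}$, which are curves when $n=2$, and there $\HH{1}{X}{\OO_X(-L_d)}\ne 0$ so your identification of $\Hom(M_{V'},\OO_X)$ with $(V')^{*}$ fails.

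For comparison, the paper's proof avoids incomplete series entirely. It first shows that $M_{L_d}\otimes\OO_X(H)$ is globally generated for $d\gg0$ (via the surjection $\HH{0}{X}{\OO_X(L_d-H)}\otimes M_H\to M_{L_d}$), so $M_{L_d}$ is a quotient of $\OO_X^{N}(-H)$ for the fixed very ample $H$. Composing gives a surjection $u_d:\OO_X^{N}(-H)\to T_d\otimes\OO_X$, whose image under $H^0(\Lambda^{t_d}(-)\otimes\OO_X(L_d+C))$ dominates the map in the Proposition; and $\phi_d=\Lambda^{t_d}u_d$ is resolved by an exact Eagon--Northcott complex whose terms are direct sums of $\OO_X(-(t_d+i)H)$. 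The required surjectivity on $H^0$ then follows from the Serre vanishings $\HH{i}{X}{\OO_X(L_d+C-(t_d+i)H)}=0$, $1\le i\le n$, which hold uniformly for $d\gg0$ because $t_d$ is bounded. This works in every dimension and requires no vanishing for exterior powers of kernel bundles of subseries. If you want to salvage your induction, you would essentially have to prove the uniform subseries vanishing as a separate (hard) lemma; it is much cheaper to trade $M_{L_d}$ for $\OO_X^N(-H)$ at the outset.
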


With $H$ the very ample divisor chosen above, the first point is:
 \begin{lemma} 
 If $d$ is sufficiently large, then the vector bundle $M_{L_d}\otimes \OO_X(H)$ is globally generated. In particular, $M_{L_d}$ is a quotient of a direct sum $\oplus \, \OO_X(-H)$ of copies of $\OO_X(-H)$.
 \end{lemma}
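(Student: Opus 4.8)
The plan is to check the global generation of $M_{L_d}\otimes\OO_X(H)$ one fibre at a time. Concretely, it suffices to prove that
\[ \HH{1}{X}{M_{L_d}\otimes\OO_X(H)\otimes\II_x}\ =\ 0 \]
for every point $x\in X$ once $d\gg 0$, since this forces the evaluation map $\HH{0}{X}{M_{L_d}\otimes\OO_X(H)}\to \big(M_{L_d}\otimes\OO_X(H)\big)\otimes k(x)$ to be surjective at every $x$. To attack this I would tensor the defining sequence $0\to M_{L_d}\to\HH{0}{X}{\OO_X(L_d)}\otimes\OO_X\to\OO_X(L_d)\to 0$ — an exact sequence of vector bundles, hence still exact after $\otimes\,\II_x$ — by $\OO_X(H)\otimes\II_x$ and pass to cohomology. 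This reduces the vanishing to two assertions: (i) $\HH{1}{X}{\II_x(H)}=0$; and (ii) the multiplication map $\HH{0}{X}{\OO_X(L_d)}\otimes\HH{0}{X}{\II_x(H)}\to\HH{0}{X}{\II_x(L_d+H)}$ is surjective. Since $H$ is very positive, $\II_x\otimes\OO_X(H)$ is globally generated with vanishing higher cohomology for every $x$, so (i) is immediate and (ii) makes sense.

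For (ii) I would set $N_x=\ker\big(\HH{0}{X}{\II_x(H)}\otimes\OO_X\to\II_x(H)\big)$ and twist the corresponding short exact sequence by $\OO_X(L_d)$; taking cohomology shows (ii) is equivalent to $\HH{1}{X}{N_x\otimes\OO_X(L_d)}=0$. The essential point is to make this uniform in $x$, and for that I would organise the $N_x$ into a flat family. Let $p_1,p_2:X\times X\to X$ be the projections, $\Delta\subseteq X\times X$ the diagonal, and $\mathcal A=\II_{\Delta}\otimes p_2^{*}\OO_X(H)$; because $H$ is positive enough, $(p_1)_{*}\mathcal A$ is locally free and the relative evaluation map $p_1^{*}(p_1)_{*}\mathcal A\to\mathcal A$ is surjective, so its kernel $\mathcal N$ is a coherent sheaf on $X\times X$, flat over the first factor, with $\mathcal N|_{\{x\}\times X}=N_x$. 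Applying relative Serre vanishing to the proper morphism $p_1$ with the $p_1$-ample line bundle $p_2^{*}\OO_X(A)$ (recall $L_d=dA+P$) gives $R^{i}(p_1)_{*}\big(\mathcal N\otimes p_2^{*}\OO_X(L_d)\big)=0$ for $i>0$ and $d\gg 0$, and cohomology and base change then yield $\HH{1}{X}{N_x\otimes\OO_X(L_d)}=0$ for all $x\in X$ simultaneously. This establishes (ii), and hence the global generation of $M_{L_d}\otimes\OO_X(H)$. For the last assertion of the Lemma I would simply twist the resulting surjection $\HH{0}{X}{M_{L_d}\otimes\OO_X(H)}\otimes\OO_X\twoheadrightarrow M_{L_d}\otimes\OO_X(H)$ by $\OO_X(-H)$, which exhibits $M_{L_d}$ as a quotient of a finite direct sum of copies of $\OO_X(-H)$.

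The step I expect to be the main obstacle is the uniformity in $x$ in the second paragraph: running Serre vanishing on each $N_x$ separately would only give a threshold $d_0(x)$ depending on the point, whereas the statement needs a single threshold valid for every $x\in X$, and it is the flat family on $X\times X$ together with the relative form of Serre vanishing (equivalently, the uniform Castelnuovo--Mumford regularity bound within a bounded family of sheaves) that repairs this. It is worth noting here that the more naive route — trying to show $M_{L_d}\otimes\OO_X(H)$ is $0$-regular with respect to $H$ — is not available: already $\HH{1}{X}{M_{L_d}}$ need not vanish when $\HH{1}{X}{\OO_X}\ne 0$, so one genuinely has to argue global generation directly. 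Everything else is a routine diagram chase once $H$ is taken sufficiently positive and $d$ is large enough that the standard vanishing and multiplicativity statements for $\OO_X(L_d)$ hold.
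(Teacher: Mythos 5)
Your argument is correct, but it takes a genuinely different route from the paper's. The paper constructs a natural map of bundles $\HH{0}{X}{\OO_X(L_d-H)}\otimes_k M_H \to M_{L_d}$, identifies its fibre at a point $x$ with the multiplication map $\HH{0}{X}{\OO_X(L_d-H)}\otimes \HH{0}{X}{\OO_X(H)\otimes\II_x} \to \HH{0}{X}{\OO_X(L_d)\otimes\II_x}$, deduces that $M_{L_d}$ is a quotient of a direct sum of copies of $M_H$ once $d\gg 0$, and then invokes the global generation of $M_H\otimes\OO_X(H)$. You instead verify global generation of $M_{L_d}\otimes\OO_X(H)$ directly through the cohomological criterion $\HH{1}{X}{M_{L_d}\otimes\OO_X(H)\otimes\II_x}=0$, reduce this to the surjectivity of $\HH{0}{X}{\OO_X(L_d)}\otimes\HH{0}{X}{\II_x(H)}\to\HH{0}{X}{\II_x(L_d+H)}$, and obtain the uniformity in $x$ from a flat family on $X\times X$ together with relative Serre vanishing and base change. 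Both proofs ultimately rest on the same fact --- surjectivity, uniformly in $x$ and for $d\gg 0$, of multiplication against $\HH{0}{X}{\II_x(H)}$ --- and the uniformity that the paper dispatches with ``it follows that the map is surjective for all $x\in X$ when $d\gg 0$'' is precisely what your incidence-variety argument spells out; your version is therefore longer but more self-contained, while the paper's factorization through $M_H$ is slicker and reuses the elementary global generation of $M_H\otimes\OO_X(H)$. Two minor remarks: your step (i) requires $\HH{1}{X}{\II_x(H)}=0$, which forces $\HH{1}{X}{\OO_X(H)}=0$ in addition to very ampleness of $H$ --- harmless, since $H$ is taken sufficiently positive, but it is an extra hypothesis the paper's route does not need; and your observation that one cannot simply prove $0$-regularity of $M_{L_d}\otimes\OO_X(H)$ with respect to $H$, because $\HH{1}{X}{M_{L_d}}\cong \HH{0}{X}{\OO_X(L_d)}\otimes\HH{1}{X}{\OO_X}$ need not vanish, is correct and worth retaining.
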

\begin{proof} 
Note to begin with that there is a natural map of vector bundles:
\[
\HH{0}{X}{\OO_X(L_d - H)} \otimes_{{k}} M_H \lra M_{L_d}. \tag{*} \]
In fact, the fibres of $M_H$ and  $M_{L_d}$   at a point $x \in X$ are canonically identified with
\[   \HH{0}{X}{\OO_X(H) \otimes \II_{x}} \ \ \text{and} \ \  \HH{0}{X}{\OO_X(L_d) \otimes \II_{x}}\]
respectively, and then (*) is just the globalization of the multiplication map:
\[
\HH{0}{X}{\OO_X(L_d - H)}  \otimes \HH{0}{X}{\OO_X(H) \otimes \II_{x}} \lra \HH{0}{X}{\OO_X(L_d) \otimes \II_{x}} \tag{**}.\footnote{We leave it as an exercise for the interested reader to give a direct global construction of (*).}
\] On the other hand, since $H$ is very ample, $\OO_X(H) \otimes \II_x$ is globally generated for all $x \in X$, and it follows that the map (**) is surjective for all $x \in X$ when  $d \gg 0$. Thus $M_d$ is a quotient of a direct sum of copies of $M_H$ provided that $d$ is sufficiently large. But $M_H \otimes \OO_X(H)$ is itself globally generated, which completes the proof. 
\end{proof}

\begin{proof}[Proof of Proposition \ref{Surjectivity.H0.Lemma}]
By the previous Lemma,  there exists a surjection
\[
 \OO^N_X(-H) \lra M_{L_d} \lra 0\]
 for suitable $N = N_d$,
and therefore also a map
\[  u_d : \OO^N_X(-H) \lra T_d \otimes_k \OO_X \lra 0.\]
It suffices to show that the resulting homomorphism
\begin{equation}
\HHH{0}{X}{\Lambda^{t_d}\big( \OO_X^N(-H ) ) \otimes \OO_X(L_d + C)} \lra \HHH{0}{X}{\OO_X(L_d + C)}
\tag{*} \end{equation}
determined by $\phi_d =_{\text{def}} \Lambda^{t_d} u_d$ is surjective on global sections if $d \gg 0$. Now $\phi_d$ is resolved by an exact Eagon-Northcott complex, which takes the shape
\small
\[   \ldots \lra \OO_X(-(t_d +2)H) \otimes_k S_2 \lra \OO_X(-(t_d +1)H) \otimes_k S_1 \lra \Lambda^{t_d}( \OO_X^N)(-t_dH)\overset{\phi_d}\lra \OO_X \lra 0\]
\normalsize
where the $S_i$ are certain vector spaces (with dimensions depending on $t_d$). It is in turn sufficient for (*) to know that
\begin{align*} \HH{1}{X}{\OO_X(L_d+C - (t_d + 1)H)}\  &=  \ \HH{2}{X}{\OO_X(L_d + C - (t_d + 2)H)}   \\ & \ldots \\ &= \  \HH{n}{X}{\OO_X(L_d +C - (t_d + n)H)} \ = \ 0.
\end{align*}
But since $t_d$ is bounded independent of $d$, we can arrange for this by taking $d \gg 0$. 
\end{proof}


\subsection*{Proof of Non-Vanishing Theorem} The plan   is to apply Theorems \ref{First.Technical.Thm}
 and \ref{Second.Technical.Theorem} to the constructions of the previous subsection. This involves studying the various sheaves associated to $W_d$ and $Z_d$, and their restrictions to a general hyperplane. So we start by fixing notation and making a few preliminary remarks. 

Assume as in the previous subsection  that we have data $H$, $\ol{X}_j$, $Z_d$ and $W_d$   adapted to $B$. Set  $\ol{X}  = \ol{X}_n \in \linser{H}$, and for $1 \le j \le n-1$ let $\ol{\ol{X_j}} = \ol{X}_ j \mid \ol{X}$. Denote by 
\[
\ol{L}_d \ , \ \ol{B} \ , \ \ol{H} \ , \  \ol{D}_i
\] 
the restrictions to $\ol{X}$ of the corresponding divisors on $X$, and set 
\[ \ol{Z}_d \ = \ Z_d \mid \ol{X} \ = \ \ol{D}_1 \, \cap \, \ldots \, \cap \ol{D}_c. \]
In order to analyze the restrictions to $\ol{X}$ of $V_d$ and $W_d$, let
\[   V^\pr_d \ = \ \HH{0}{X}{\OO_X(L_d) \otimes \II_{\ol{X}/X} } \ = \ \HH{0}{X}{\OO_X(L_d - H) }
\] and put
\[  \ol{V}_d \ = \ V_d / V^\pr_d \ \underset{  (d \gg 0) }{=} \ \HH{0}{\ol{X}}{\OO_{\ol{X}}(\ol{L}_d)}.
\]
Observe that
\begin{equation}
\label{Dim.vdprime}
v_d^\pr \ =_{\text{def}} \ \dim \, V_d^\pr \ = \ v_d - O(d^{n-1}).  \end{equation}
Similarly, set
\begin{equation}
  W^\pr_d  \ = \ \pi_d(V^\pr_d)\ \ , \ \ \ol{W}_d \ = \ W_d / W_d^\pr \ \ , \ \ \ol{w}_d \ = \ \dim \ol{W}_d,  \end{equation}
and let
\begin{equation}  \label{Two.Ideals.Xbar}
 \ol{J}_d \ \subseteq \ \HH{0}{\ol{X}}{\II_{\ol{Z}_d / \ol{X} } \otimes \OO_{\ol{X}}(\ol{L}_d)}\end{equation}
denote the image of $J_d$ on $\ol{X}$, so that
$ \ol{W_d}= \ol{V}_d / \ol{J}_d. $ 

The next statement shows that our constructions behave well inductively with respect to Definition \ref{Def.Adapted}. 
\begin{lemma} \label{Restrictions.are.Adapted} If the data $H$, $\ol{X}_j$, $Z_d$ and $W_d$ are adapted to $B$ on $X$, then their
  restrictions $\ol{H}$, $\ol{\ol{X}}_j$, $\ol{Z}_d$ and $\ol{W}_d$ are adapted to the divisor $\ol{B}+\ol{H}$ on $\ol{X}$. 
\end{lemma}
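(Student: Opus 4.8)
The plan is to verify conditions (i), (ii), (iii) of Definition \ref{Def.Adapted} for the restricted data $\ol{H}$, $\ol{\ol{X}}_j$, $\ol{Z}_d$, $\ol{W}_d$ on $\ol{X}$, with the divisor $\ol{B}+\ol{H}$ playing the role of $B$. Throughout I will use that $\ol{X} = \ol{X}_n \in \linser{H}$ is general (so in particular smooth, irreducible, meeting all the other divisors in the construction transversely) and that for $d \gg 0$ restriction to $\ol{X}$ is surjective on the relevant spaces of sections.

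First I would check (i), i.e.\ that $\ol{H}$ satisfies \eqref{First.Cond.Eqn}--\eqref{Third.Cond.Eqn} relative to $\ol{B}+\ol{H}$. For \eqref{First.Cond.Eqn} one needs $\ol{H} + (\ol{B}+\ol{H}) - K_{\ol{X}}$ very ample; by adjunction $K_{\ol{X}} = (K_X + H)\mid \ol{X}$, so this divisor is the restriction of $H + B - K_X + H = (H+B-K_X) + H$, a sum of a very ample divisor (by \eqref{First.Cond.Eqn} on $X$) and the very ample $H$, hence very ample, and its restriction to $\ol{X}$ is very ample. For the vanishings \eqref{Second.Cond.Eqn} and \eqref{Third.Cond.Eqn} on $\ol{X}$ I would twist the exact sequence $0 \to \OO_X(-H) \to \OO_X \to \OO_{\ol{X}} \to 0$ by the appropriate line bundle and use the corresponding vanishings on $X$ — e.g.\ $\HH{i}{\ol{X}}{\OO_{\ol{X}}(m\ol{H} + \ol{B}+\ol{H})}$ is squeezed between $\HH{i}{X}{\OO_X(mH+B+H)}$ and $\HH{i+1}{X}{\OO_X(mH+B)}$, both of which vanish for $i>0$, $m \ge 1$ by \eqref{Second.Cond.Eqn}; likewise for \eqref{Third.Cond.Eqn}, using $K_{\ol{X}} = (K_X+H)\mid\ol{X}$ and \eqref{Third.Cond.Eqn} on $X$.

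Next, (ii): $\ol{Z}_d = \ol{D}_1 \cap \cdots \cap \ol{D}_c$ with $\ol{D}_i = D_i \mid \ol{X}$, and since the $D_i$ were chosen so that $\sum \ol{X}_j + \sum D_i$ has simple normal crossings, the restricted divisors $\ol{D}_i$ form a simple-normal-crossings configuration on $\ol{X}$ of the required shape: $\ol{D}_1, \dots, \ol{D}_{c-2} \in \linser{\ol{H}}$, $\ol{D}_{c-1} \in \linser{\ol{H} + \ol{B} - K_{\ol{X}} + K_{\ol{X}} - \ol{B} - \ol{H}}$... more precisely one checks $\ol{D}_{c-1} \in \linser{(H+B-K_X)\mid \ol{X}} = \linser{\ol{H} + (\ol{B}+\ol{H}) - K_{\ol{X}}}$ using adjunction, and $\ol{D}_c \in \linser{(L_d - (c-1)H)\mid \ol{X}} = \linser{\ol{L}_d - (c-1)\ol{H}}$; so $\ol{Z}_d$ is constructed on $\ol{X}$ exactly as in \eqref{Def.of.Di.Eqn}--\eqref{Def.of.Z.Eqn} relative to $\ol{B}+\ol{H}$. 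Finally, (iii): I must show $\ol{J}_d \subseteq \HH{0}{\ol{X}}{\II_{\ol{Z}_d/\ol{X}} \otimes \OO_{\ol{X}}(\ol{L}_d)}$ satisfies \eqref{First.Cond.Wd} and \eqref{Second.Cond.Wd}. Global generation of $\II_{\ol{Z}_d/\ol{X}}(\ol{L}_d)$ by $\ol{J}_d$ follows from global generation of $\II_{Z_d/X}(L_d)$ by $J_d$ — since $Z_d$ restricts to $\ol{Z}_d$ and the generation statement is local, it survives restriction; and the bound \eqref{Second.Cond.Wd} on $\dim(\ol{J}_{0,d}/\ol{J}_d)$ will follow from the bound on $\dim(J_{0,d}/J_d)$ on $X$ together with the estimate \eqref{Dim.vdprime}-type control, using Lemma \ref{Cohom.Properties.Ideal.Lemma}(ii) to bound the discrepancy between the image $\ol{J}_{0,d}$ of $J_{0,d}$ and the full space $\HH{0}{\ol{X}}{\II_{\ol{Z}_d/\ol{X}}(\ol{L}_d)}$ — the latter differs from the former by a term controlled by $\HH{1}{X}{\II_{Z_d/X}(L_d - H)}$, which is bounded independently of $d$.

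The main obstacle is step (iii), specifically the second condition \eqref{Second.Cond.Wd}: one has to track carefully the chain of spaces $J_d \to \ol{J}_d \hookrightarrow \ol{J}_{0,d} \hookrightarrow \HH{0}{\ol{X}}{\II_{\ol{Z}_d/\ol{X}}(\ol{L}_d)}$ and bound each successive cokernel by a constant, which is where the cohomological input of Lemma \ref{Cohom.Properties.Ideal.Lemma}(ii) (boundedness of $\HH{1}{X}{\II_{Z_d/X}(L_d)}$ and $\HH{1}{X}{\II_{Z_d/X}(L_d-H)}$) is essential; the snc hypotheses on $\sum \ol{X}_j + \sum D_i$ are exactly what guarantees that the restricted configuration on $\ol{X}$ remains of the prescribed form so that everything is genuinely "adapted" and the induction can proceed.
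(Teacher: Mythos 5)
Your argument is correct and follows essentially the same route as the paper: adjunction identifies $(\ol{B}+\ol{H})-K_{\ol{X}}$ with $(B-K_X)|_{\ol{X}}$, the conditions on $\ol{H}$ and $\ol{Z}_d$ follow from the restriction sequence $0\to\OO_X(-H)\to\OO_X\to\OO_{\ol{X}}\to 0$, and the bound \eqref{Second.Cond.Wd} on $\ol{X}$ is obtained by the same two-step comparison of $\ol{J}_d$ with the image of $\HH{0}{X}{\II_{Z_d/X}(L_d)}$ and then with the full space $\HH{0}{\ol{X}}{\II_{\ol{Z}_d/\ol{X}}(\ol{L}_d)}$, the second cokernel being controlled by $H^1\big(X,\II_{Z_d/X}(L_d-H)\big)$ via Lemma \ref{Cohomol.Properties.Lemma}(ii). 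One tiny slip: in your verification of \eqref{First.Cond.Eqn} the divisor $\ol{H}+(\ol{B}+\ol{H})-K_{\ol{X}}$ is the restriction of $H+B-K_X$ (one $H$ cancels against the adjunction term), not of $(H+B-K_X)+H$ --- as your own computation in step (ii) correctly shows --- but this does not affect the very-ampleness conclusion.
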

\begin{proof}
Note to begin with that
\[   \big( B - K_X \big) | \ol{X} \ \lin \ ( \ol{B} +\ol{H}) - K_{\ol X}  \]
thanks to the adjunction formula. The fact that $\ol{H}$ and $\ol{Z}_d$ satisfy the required properties then  follows at once using the exact sequence $0 \to \OO_X (-H) \to\OO_X \to \OO_{\ol{X}} \to 0$ of sheaves on $X$. Turning to the conditions on $\ol{W}_d$, it is immediate that $\ol{J}_d$ generates the sheaf $\II_{\ol{Z}_d/\ol{X}}\otimes \OO_{\ol{X}}(\ol{L}_d)$. It remains only to show that 
\begin{equation}
\label{Restricted.W.Bdd.Dim.Eqn}
\dim \, \frac{ \HH{0}{\ol{X}}{\II_{\ol{Z}_d / \ol{X} } \otimes \OO_{\ol{X}} (\ol{L}_d)}}{\ol{J}_d}  \ \le \ \ol{a}  \end{equation}
for some integer $\ol{a}$ independent of $d$. For this, 
let
\[ \ol{I}_d  \ = \ \Image \Big( \HH{0}{X}{\II_{Z_d/X}(L_d)} \lra \HH{0}{\ol{X}}{\II_{\ol{Z}_d/\ol{X}}(\ol{L}_d)}\Big).\]
Then on the one hand, 
\[ \dim \,    \ol{I}_d / \ol{J}_d  \ \le \ a \]
thanks to  \eqref{Second.Cond.Wd}. On  the other hand, there is a natural injection
\[ \frac{ \HH{0}{\ol{X}}{\II_{\ol{Z}_d/\ol{X}}(\ol{L}_d)}}{\ol{I}_d} \ \subseteq \  \HH{1}{ {X}}{\II_{ {Z}_d/{X}}( {L}_d -H)}, \]
and the dimension of the group on the right is bounded independent of $d$ by virtue of Lemma \ref{Cohomol.Properties.Lemma}
 (ii). \end{proof}
 
The following statement contains the main inductive step:
\begin{lemma} \label{Main.Inductive.Proposition}
Fix     a divisor $B$, and assume that
  $H$, $Z_d$ and $W_d$ are adapted to $B$.  
  \begin{itemize}
    \item[(i).]  If $q \ge 2$ then for sufficiently large $d$, $W_d$ carries weight $q$ syzygies of $B$ with respect to $L_d$;
    \vskip 10pt
    \item[(ii).] If $q \ge 3$ then for sufficiently large $d$,  $\ol{W_d}$ carries weight $(q-1)$ syzygies of $\ol{B} + \ol{H}$ on $\ol{X}$ with respect to $\ol{L}_d$. 
\end{itemize}   
\end{lemma}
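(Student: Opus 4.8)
The plan is to prove the two statements of Lemma \ref{Main.Inductive.Proposition} together by induction on $n = \dim X$, using Theorem \ref{First.Technical.Thm} as the engine that passes from a hyperplane section back up to $X$. Note first that statement (ii) is nothing but statement (i) applied on $\ol X$: by Lemma \ref{Restrictions.are.Adapted}, the restricted data $\ol H$, $\ol{\ol X}_j$, $\ol Z_d$, $\ol W_d$ are adapted to $\ol B + \ol H$ on $\ol X$, and $\dim \ol X = n-1$, so once (i) is known in dimension $n-1$ with weight $q-1\ge 2$, we get (ii) for free. Thus the whole statement reduces to proving (i) by induction on $n$, and we may assume $q\ge 2$ throughout.

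For the base case $n = 1$ (which occurs in the induction once we have descended far enough, since $q$ drops by one at each step and we stop when $q$ reaches $2$ and $\dim X$ reaches $1$): here $X$ is a curve, $q = 2$, and the condition that $W_d$ carries weight $2$ syzygies holds automatically, as recorded in the Remark following Definition \ref{Def.Plane.Carries.Syzygies}. So there is nothing to check. For the inductive step, suppose $n\ge 2$ and that (i) holds in all dimensions $< n$. We want to apply Theorem \ref{First.Technical.Thm} on $X$ with the divisor $H = \ol X_n$ as the hyperplane and $\ol X = \ol X_n$. Its two hypotheses are: the vanishing $\HH{q-1}{X}{\II_{Z_d/X}(B + L_d + H)} = 0$, and that $\ol W_d$ carries weight $(q-1)$ syzygies of $\ol B + \ol H$ on $\ol X$. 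The first is exactly the second line of Lemma \ref{Cohomol.Properties.Lemma}(i), valid for $d \gg 0$. The second is statement (ii) of the present Lemma; but by Lemma \ref{Restrictions.are.Adapted} this is statement (i) on $\ol X$ with weight $q-1$, and since $\dim \ol X = n-1 < n$ it is supplied by the inductive hypothesis --- provided $q - 1 \ge 2$. When $q - 1 = 1$, i.e. $q = 2$, the inductive hypothesis does not directly apply, and this is the case that needs separate handling.

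The case $q = 2$ is where the real work of starting the induction lies, and I expect it to be the main obstacle. When $q = 2$ we have $c = n + 2 - q = n$, so $\dim Z_d = q - 2 = 0$: the scheme $\ol Z_d = Z_d \cap \ol X$ is empty once $\dim X = n \ge 2$, hence on $\ol X$ we are in the degenerate situation $\ol w_d$ could still be positive through $\ol W_d$, but the relevant map is the one on $H^0$. Concretely, the hypothesis of Theorem \ref{First.Technical.Thm} for $q = 2$ asks that
\[
\HHH{0}{\ol X}{\Lambda^{\ol w_d} M_{\ol V_d} \otimes \OO_{\ol X}(\ol B + \ol H + \ol L_d)} \lra \HHH{0}{\ol X}{\II_{\ol Z_d/\ol X}(\ol B + \ol H + \ol L_d)}
\]
be surjective. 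Now $Z_d = D_1 \cap \dots \cap D_n$ with $\dim X = n$, so on $\ol X$ it is cut out by $n$ divisors on an $(n-1)$-fold; one of the $\ol D_i$ becomes redundant, and in fact $\ol Z_d$ being empty means we must instead track which components of $W_d$ survive the restriction. The clean way to handle this: after restricting, $M_{\ol V_d}$ admits a trivial quotient $M_{\ol V_d} \lra T_d \otimes_k \OO_{\ol X}$ with $\dim T_d = \ol w_d \le $ (number of $D_i$ not involving $L_d$) $\le c = n$, bounded independently of $d$, and the map above factors through $\Lambda^{\ol w_d}$ of this trivial quotient. So the surjectivity we need is precisely the conclusion of Proposition \ref{Surjectivity.H0.Lemma}, applied on $\ol X$ (which has dimension $n-1$) with $t_d = \ol w_d \le c$ and $C = \ol B + \ol H$. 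That proposition gives the surjectivity for $d \gg 0$, and then Theorem \ref{First.Technical.Thm} (with the convention $\ol\sigma = \mathrm{id}$ absorbed via Remark \ref{Wbar.Eqs.Zero.Remark} in the sub-case $W_d' = W_d$) yields that $W_d$ carries weight $2$ syzygies of $B$ on $X$.

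To assemble the proof: first dispose of (ii) as a restatement of (i) in one lower dimension via Lemma \ref{Restrictions.are.Adapted}; then prove (i) by induction on $n$, with the curve case trivial; in the inductive step invoke Lemma \ref{Cohomol.Properties.Lemma}(i) for the vanishing hypothesis of Theorem \ref{First.Technical.Thm}, and for the hyperplane-section hypothesis either feed in the inductive hypothesis (when $q \ge 3$) or, when $q = 2$, feed in Proposition \ref{Surjectivity.H0.Lemma} on $\ol X$ to verify the required $H^0$-surjectivity directly. The only genuinely delicate points are checking that the trivial quotient of $M_{\ol V_d}$ has rank bounded by $c$ (so that Proposition \ref{Surjectivity.H0.Lemma} applies) and making sure the identification of the $q=2$ hypothesis of Theorem \ref{First.Technical.Thm} with the $\Lambda^{\ol w_d}$ of that trivial quotient is correct --- both of which follow by unwinding the definitions of $\ol W_d$ and $M_{\ol V_d}$ together with Lemma \ref{Splitting.Proposition}.
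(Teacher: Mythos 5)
Your argument is correct and is essentially the paper's own proof: the same induction run through Theorem \ref{First.Technical.Thm}, with Lemma \ref{Restrictions.are.Adapted} propagating adaptedness to $\ol{X}$, Lemma \ref{Cohomol.Properties.Lemma}(i) supplying the vanishing hypothesis, and the $q=2$ case settled by applying Proposition \ref{Surjectivity.H0.Lemma} to the trivial quotient $M_{\ol{L}_d}\twoheadrightarrow \ol{W}_d\otimes\OO_{\ol{X}}$ on $\ol{X}$. (Your ``base case $n=1$'' remark is a harmless slip --- the descent terminates at $q=2$ in dimension $n-q+2\ge 2$, never on a curve, and the boundedness of $\ol{w}_d$ comes from \eqref{Restricted.W.Bdd.Dim.Eqn} rather than from counting the $D_i$ --- but your separate treatment of $q=2$ is exactly what is needed.)
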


\begin{proof}
The plan of course is  to apply Theorem \ref{First.Technical.Thm}. To this end we consider
 the exact sequence
\begin{equation} \label{Restricted.Secant.Seq}
0 \lra \Sigma_{\ol{W}_d} \lra \ol{W}_d \otimes \OO_{\ol{X}} \lra \OO_{Z_d} (\ol{L}_d) \lra 0 
\end{equation}
of sheaves on $\ol{X}$, and the quotient $ M_{\ol{L}_d} \twoheadrightarrow \Sigma_{\ol{W}_d} $, giving rise to  \begin{equation}
\ol{\sigma}_d : \Lambda^{\ol{w}_d} M_{\ol{L}_d} \lra \II_{\ol{Z}_d/\ol{X}}. \notag 
\end{equation}
Observe that if $q = 2$, so that $\dim Z_d = 0$, then $\ol{Z}_d = \varnothing$, and \eqref{Restricted.Secant.Seq} shows that 
\[ 
\Sigma_{\ol{W}_d} = \ol{W}_d \otimes \OO_{\ol{X}} 
\] 
is a trivial bundle of rank $= \ol{w}_d$. 

For fixed $c = n + 2 -q$ we proceed by induction on $n$ -- or equivalently on $q$ -- starting with   $q = 2$. In this case (as we have just noted) $\ol{Z}_d = \varnothing$, and  we claim that 
the homomorphism 
\begin{equation} \label{H0.Surjectivity.to.Verify}
  \ \HHH{0}{\ol{X}}{\Lambda^{\ol{w}_d} M_{\ol{L}_d}(\ol{B} + \ol{L}_d + \ol{H})} \lra \HHH{0}{\ol{X}}{\OO_{\ol{X}}(\ol{B} + \ol{L}_d + \ol{H})}  
  \end{equation}
determined by $\ol{\sigma}_d$ is surjective for $d \gg 0$. Indeed,  $\ol{\sigma}_d$
arises here as an exterior power of the trivial quotient 
\[
M_{\ol{L}_d}\lra  \Sigma_{\ol{W}_d} \ = \ \ol{W}_d \otimes \OO_{\ol{X}}.
\]
Moreover the fact (Lemma \ref{Restrictions.are.Adapted}) that $\ol{W}_d$ is adapted to $\ol{B} + \ol{H}$ -- in particular, equation \eqref{Restricted.W.Bdd.Dim.Eqn} -- implies that that $\ol{w}_d = \rk (\ol{W}_d)$ is bounded independent of $d$.  So the required surjectivity follows for $d \gg 0$ from Lemma \ref{Surjectivity.H0.Lemma}. 

Now for large $d$ it is automatic that $L_d$ satisfies the vanishings  \eqref{Van.Hypoth.Kosz.from.M}, while    \eqref{Van.of.Hi} was established in Lemma  \ref{Higher.Cohom.Twists.Ideal.Z} (i).
Therefore Theorem \ref{First.Technical.Thm} applies, and having checked  the surjectivity of \eqref{H0.Surjectivity.to.Verify} we deduce that statement (i) of the Lemma holds when $q = 2$. But thanks to Lemma \ref{Restrictions.are.Adapted}, we can then apply this statement to the divisor $\ol{B} + \ol{H}$ on $\ol{X}$  to conclude statement (ii) of the Lemma in the case $q =3$. Theorem \ref{First.Technical.Thm} 
then gives the case $q = 3$ of statement (i), and we continue inductively in this fashion.  
\end{proof}

Theorem \ref{Main.NonVan.Statement.Section.2} is now immediate.
\begin{proof} [Proof of Theorem \ref{Main.NonVan.Statement.Section.2}] We simply invoke Theorem \ref{Second.Technical.Theorem}. In fact, the hypotheses of that statement are satisfied thanks to the surjectivity of \eqref{H0.Surjectivity.to.Verify} (when $q = 2$) and Proposition \ref{Main.Inductive.Proposition} (ii) (when $q \ge 3$), as well as Lemma  \ref{Cohom.Properties.Ideal.Lemma} (i) and the observation that $\ol{v}_d - \ol{w}_d = O(d^{n-1})$. We deduce that if $d \gg 0$, then
 $K_{p,q}(X, B; L_d) \ne 0 $ for all 
\[ w_d + 1 - q \ \le \ p \ \le \ (v^\pr_d + \ol{w_d}) + 1 - q. \] 
The first statement of the theorem  then follows from   the estimates for $w_d$ and $v^\pr_d$ given by Lemma \ref{Estimate.wd.Lemma} and equation \eqref{Dim.vdprime} respectively. If $\HH{i}{X}{\OO_X(B)} = 0$ for $0 < i < n$, then for large $d$ we can apply duality (Proposition \ref{Duality}) to the the non-vanishing just established (and Proposition \ref{Kp1.Proposition} below) to get the second assertion.
 \end{proof}


\section{The groups $K_{p,0}$ and $K_{p,1}$} \label{Kp1.Section}

In this section, we complete the proof of Theorem \ref{Main.Thm.Intro} from the Introduction by analysing syzygies of weight $0$ and $1$. As above, $X$ is a smooth projective variety of dimension $n$, $B$ is a fixed divisor on $X$, and $L_d = dA + P$ where $A$ is ample and $P$ is arbitrary.

To begin with, a  theorem of Green and an argument of Ottaviano--Paoletti yield complete control  over the non-vanishing of $K_{p,0}$: 
\begin{proposition} \label{Kp0.Proposition}
If $d$ is sufficiently large, then
\[   K_{p,0}(X,B; L_d) \ \ne \ 0 \ \Longleftrightarrow \ p \, \le \, r(B),\]
where as usual $r(B) = \hh{0}{X}{\OO_X(B)}-1$.
\end{proposition}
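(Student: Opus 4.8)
The plan is to reduce to a cohomological statement about exterior powers of the bundle $M_{L_d}$ and then play the two ends of the resolution against one another. First, by Remark \ref{Kp0. Lemma}: since $A$ is ample and $B$ is fixed, $B - jL_d$ has negative $A$-degree for every $j \ge 1$ once $d$ is large, so $\HH{0}{X}{\OO_X(B - jL_d)} = 0$ and hence
\[
K_{p,0}(X, B; L_d) \ = \ \HH{0}{X}{\Lambda^p M_{L_d} \otimes \OO_X(B)} .
\]
The Proposition thus becomes: for $d \gg 0$ this group is non-zero precisely when $0 \le p \le r(B)$. Two reductions are immediate. Since $M_{L_d} \subseteq V_d \otimes \OO_X$ is a sub-bundle, $\Lambda^p M_{L_d}$ is a sub-bundle of $\Lambda^p V_d \otimes \OO_X$, so $\HH{0}{X}{\Lambda^p M_{L_d}(B)} \subseteq \Lambda^p V_d \otimes \HH{0}{X}{\OO_X(B)}$; in particular the group is zero when $\HH{0}{X}{\OO_X(B)} = 0$, which disposes of the case $r(B) < 0$, and it is zero for $p > r_d = \rk M_{L_d}$ since then $\Lambda^p M_{L_d} = 0$. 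So we may assume $r(B) \ge 0$ and $p \le r_d$.

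For the non-vanishing in the range $0 \le p \le r(B)$, I would write down explicit Koszul syzygies, following the construction of \cite{GL1} used by Ottaviani--Paoletti in \cite{OP}. For $d \gg 0$ the divisor $L_d - B$ is very ample, in particular effective; fix $0 \ne g \in \HH{0}{X}{\OO_X(L_d - B)}$. If $p \le r(B)$ one may choose linearly independent $b_0, \dots, b_p \in \HH{0}{X}{\OO_X(B)}$, and then the sections $\ell_i \ =_{\text{def}}\ g\, b_i \in \HH{0}{X}{\OO_X(L_d)} = V_d$ are linearly independent, multiplication by $g$ being injective on a variety. Then
\[
\omega \ = \ \sum_{i=0}^{p} (-1)^i \, \big(\ell_0 \wedge \dots \wedge \widehat{\ell_i} \wedge \dots \wedge \ell_p\big) \otimes b_i \ \in \ \Lambda^p V_d \otimes \HH{0}{X}{\OO_X(B)}
\]
is non-zero and is a Koszul cycle of weight $0$: in the Koszul differential the contributions carrying the class $\ell_i b_j = g\, b_i b_j = \ell_j b_i$ occur twice with opposite sign. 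Since $\Lambda^{p+1} V_d \otimes \HH{0}{X}{\OO_X(B - L_d)} = 0$ there are no weight-$0$ boundaries, so $\omega$ defines a non-zero element of $K_{p,0}(X, B; L_d)$.

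For the vanishing when $r(B) < p \le r_d$ — which is the real content — the plan is to transport the question to the top strand of the resolution of a twisted module. Combining the identification $\big(\Lambda^p M_{L_d}\big)^{*} \cong \Lambda^{r_d - p} M_{L_d} \otimes \OO_X(L_d)$ with Serre duality on $X$ and the interpretation of Corollary \ref{Higher.Cohom.Interpr.Kpq}, one gets for $d \gg 0$ a natural isomorphism
\[
K_{p,0}(X, B; L_d) \ \cong \ K_{\,r_d - p - n,\ n+1}\big(X,\ K_X - B;\ L_d\big)^{*} ;
\]
the only hypotheses used here are the automatic vanishings needed for Remark \ref{Kp0. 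Lemma} and for Corollary \ref{Higher.Cohom.Interpr.Kpq} applied to $B^\pr = K_X - B$, so — since only the extreme strand is being dualized — no Cohen--Macaulay assumption on $R(B; L_d)$ is required. It then suffices to quote the theorem of Green \cite{Kosz1} and the argument of Ottaviani--Paoletti \cite{OP} (together with Schreyer \cite{Schreyer}) in the form: for a fixed divisor $C$ and $d \gg 0$ one has $K_{p', n+1}(X, C; L_d) = 0$ whenever $p' < r_d - n + 1 - \hh{0}{X}{\OO_X(K_X - C)}$. This is the vanishing below the non-vanishing interval of statement $(3)$ of the Introduction, now for the twisted module $R(C; L_d)$, and its proof carries over unchanged, as it only concerns $H^n$ of twists of the bundles $\Lambda^{\bullet} M_{L_d}$. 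Applying it with $C = K_X - B$, so that $\hh{0}{X}{\OO_X(K_X - C)} = \hh{0}{X}{\OO_X(B)} = r(B) + 1$, and substituting $p' = r_d - p - n$, gives $K_{p,0}(X, B; L_d) = 0$ for $p > r(B)$, which together with the previous paragraph proves the Proposition.

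The hard part is thus the last step, and really just its non-formal input: the vanishing of the top Koszul strand $K_{\bullet, n+1}$ of $R(K_X - B; L_d)$ below the predicted interval. The reduction via Remark \ref{Kp0. Lemma}, the Koszul-syzygy construction, and the duality reformulation are all routine.
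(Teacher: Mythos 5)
Your proposal is correct, and your non-vanishing half is essentially the paper's argument verbatim: the same reduction via Remark \ref{Kp0. Lemma}, and the same explicit Koszul cycle $\sum (-1)^j (f_0 s \wedge \cdots \wedge \widehat{f_j s} \wedge \cdots \wedge f_p s)\otimes f_j$ built from a section $s$ of $L_d - B$, following \cite{OP}. Where you diverge is the vanishing half. The paper simply quotes Green's vanishing theorem \cite[Theorem 3.a.1]{Kosz1}, which gives $\HH{0}{X}{\Lambda^p M_{L_d}\otimes \OO_X(B)} = 0$ directly once $p \ge \hh{0}{X}{\OO_X(B)}$ --- one line, no duality, no hypotheses beyond those already in force. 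You instead dualize to the top strand, identifying $K_{p,0}(X,B;L_d)$ with $K_{r_d-p-n,\,n+1}(X,K_X-B;L_d)^*$, and then cite the vanishing of $K_{p',n+1}$ below the interval of statement (3). Two remarks on this. First, the references you invoke (Green, Schreyer, Ottaviani--Paoletti) are stated in the paper only for the untwisted module, so you are right to flag that the twisted version needs justification; but second, and more to the point, the standard proof of that twisted top-strand vanishing is precisely Serre duality applied to $\HH{n}{X}{\Lambda^{p'+n}M_{L_d}\otimes\OO_X(K_X - B + L_d)}$ together with $\Lambda^{p'+n}M_{L_d}^* \cong \Lambda^{r_d-p'-n}M_{L_d}(L_d)$ --- i.e., exactly the duality you just performed, run backwards, landing on $\HH{0}{X}{\Lambda^{p}M_{L_d}\otimes\OO_X(B)}$ and then invoking Green's Theorem 3.a.1. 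So your detour is logically sound but collapses to the paper's one-line citation; the "non-formal input" you defer to the literature in your last paragraph is literally the vanishing of the $H^0$ group you started from. Note also that the paper's Corollary immediately following this Proposition derives the $K_{p,n+1}$ statement \emph{from} the Proposition, which is the cleaner logical order. Nothing is wrong, but you should recognize that Green's Theorem 3.a.1 is doing all the work in either direction.
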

\begin{proof}
We assume in the first place that $d$ is large enough so that $\HH{0}{X}{\OO_X(B -mL_d)} = 0$ for $m \ge 1$, in which case
\[  K_{p,0}(X, B; L_d) \ = \ Z_{p,0}(X,B;L_d) \ = \ \HH{0}{X}{\Lambda^p M_{L_d} \otimes B}   \]
 thanks to Remark \ref{Kp0. Lemma}. The vanishing of this group  when $p \ge \hh{0}{X}{\OO_X(B)}$ was established by Green \cite[Theorem 3.a.1]{Kosz1}. On the other hand, supposing that $p < \hh{0}{X}{\OO_X(B)}$,  choose linearly independent sections
\[  f_0 \, , \, \ldots \, , \, f_p \ \in \ \HH{0}{X}{\OO_X(B)}  \]
and fix (when $d$ is large compared to $B$) a non-zero section $s \in \HH{0}{X}{\OO_X(L_d - B)}$. Inspired by \cite{OP},  consider   the element
\small
 \[    \alpha \ =_{\text{def}} \ \sum_{j =0}^{p} (-1)^j \big( f_0s \wedge \ldots \wedge \widehat{f_js} \wedge \ldots  
\wedge f_ps \big) \otimes f_j \ \in \ \Lambda^p \HH{0}{X}{\OO_X(L_d)} \otimes \HH{0}{X}{\OO_X(B)}. 
 \]
 \normalsize
 Then $\alpha$ is killed by the Koszul differential, and hence gives  a non-zero class in $Z_{p,0}(X, B;L_d)$, as required. 
 \end{proof}
 
By applying the Proposition with $B$ replaced by $K_X - B$, and using duality \cite[2.c.6]{Kosz1}, one finds:
 \begin{corollary}
 If $d$ is sufficiently large, then
$K_{p, n+1}(X,B;L_d) \ne 0$ if and only if \[
r_d - n - r(K_X -B)  \ \le \  p  \ \le \ r_d - n. \qed\]
 \end{corollary}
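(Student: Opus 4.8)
The plan is to deduce this as a formal consequence of Proposition \ref{Kp0.Proposition} together with Green's duality theorem for Koszul cohomology. Recall that $K_{p,q}(X,B;L_d)$ and $K_{p,n+1}(X,B;L_d)$ sit at opposite ends of the $q$-range $[0,n+1]$, and for $d\gg 0$ the module $R(B;L_d)$ is Cohen--Macaulay with a resolution of length $r_d$, so the two extremes should be dual to one another after the substitution $B\rightsquigarrow K_X-B$.

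First I would fix $d$ large enough that all the hypotheses in sight are in force simultaneously: that $L_d$ is normally generated, that $L_d$ satisfies the vanishings \eqref{Van.Hypoth.Kosz.from.M}, \eqref{Low.Twists.Vanish}, and the middle-cohomology vanishing (*) of Proposition \ref{Duality} for the divisor $B' = K_X - B$ (Serre vanishing handles all of these once $d$ is large compared with $B$, $K_X$ and the geometry of $X$). I would also ask that $d$ be large enough so that Proposition \ref{Kp0.Proposition} applies to $K_X - B$ in place of $B$, which is legitimate since that Proposition is itself an ``all $d\gg 0$'' statement. With these choices, $v = v_d$, $r = r_d$, and $r(K_X-B) = h^0(X,\OO_X(K_X-B)) - 1$, a quantity independent of $d$.

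Next I would apply Green's duality (Proposition \ref{Duality}) in the case $q = n+1$: it gives
\[
K_{p,n+1}(X,B;L_d) \ \cong \ K_{r_d - p - n,\, 0}\big(X,\, K_X - B;\, L_d\big)^{*}.
\]
Hence $K_{p,n+1}(X,B;L_d)\ne 0$ if and only if $K_{r_d-p-n,0}(X,K_X-B;L_d)\ne 0$. Now invoke Proposition \ref{Kp0.Proposition} with $B$ replaced by $K_X - B$: for $d\gg 0$ the right-hand group is nonzero precisely when $0 \le r_d - p - n \le r(K_X - B)$. Rearranging the two inequalities $r_d - p - n \ge 0$ and $r_d - p - n \le r(K_X-B)$ gives exactly $r_d - n - r(K_X - B) \le p \le r_d - n$, which is the assertion. (One should also note the lower inequality $r_d - p - n \ge 0$ is the one forcing $p \le r_d - n$, while $r_d - p - n \le r(K_X-B)$ forces the lower bound on $p$; when $K_X - B$ is not effective, $r(K_X-B) = -1$ and the interval is empty, consistent with the vanishing $K_{p,n+1} = 0$ in that case.)

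I do not expect any genuine obstacle here: the only point requiring a little care is bookkeeping, namely checking that a single choice of ``$d$ sufficiently large'' simultaneously validates the hypotheses of Proposition \ref{Duality} for $B' = K_X - B$ and the conclusion of Proposition \ref{Kp0.Proposition} for $K_X - B$. Both are finite lists of cohomology-vanishing conditions that hold for all large $d$ by Serre vanishing applied to the ample $A$, so their conjunction also holds for all large $d$. The arithmetic of translating the range on $r_d - p - n$ into a range on $p$ is entirely routine.
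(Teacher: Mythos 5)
Your approach is exactly the paper's: dualize $K_{p,n+1}(X,B;L_d)$ into $K_{r_d-p-n,0}(X,K_X-B;L_d)$ and then quote Proposition \ref{Kp0.Proposition} for the divisor $K_X-B$; the arithmetic at the end is also right, including the observation that a non-effective $K_X-B$ gives an empty interval.

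The one step that does not survive scrutiny as written is your claim that Serre vanishing validates the hypothesis (*) of Proposition \ref{Duality} for $B'=K_X-B$ once $d\gg 0$. That hypothesis demands $\HH{i}{X}{\OO_X(B'+mL_d)}=0$ for $0<i<n$ and \emph{all} $m\in\ZZ$, and the instance $m=0$, namely $\HH{i}{X}{\OO_X(K_X-B)}=0$, is independent of $d$ and simply fails for many pairs $(X,B)$ (e.g.\ $X$ an abelian surface and $B=0$). So Proposition \ref{Duality} in the form stated is not available for arbitrary $B$, whereas the Corollary is asserted for arbitrary $B$. The repair is cheap, and is what the paper's citation of Green's \textnormal{[2.c.6]} (rather than \textnormal{[2.c.1]}) is signalling: for the extreme case $q=n+1$ one needs no intermediate-cohomology hypothesis at all. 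Indeed, for $d\gg 0$ Corollary \ref{Higher.Cohom.Interpr.Kpq} gives $K_{p,n+1}(X,B;L_d)=\HH{n}{X}{\Lambda^{p+n}M_{L_d}\otimes\OO_X(B+L_d)}$, Remark \ref{Kp0. Lemma} gives $K_{p',0}(X,K_X-B;L_d)=\HH{0}{X}{\Lambda^{p'}M_{L_d}\otimes\OO_X(K_X-B)}$, and Serre duality together with $\Lambda^{p+n}M_{L_d}^{*}\cong\Lambda^{r_d-p-n}M_{L_d}\otimes\OO_X(L_d)$ identifies the first group with the dual of the second for $p'=r_d-p-n$; only the vanishings \eqref{Van.Hypoth.Kosz.from.M} and \eqref{Low.Twists.Vanish}, which genuinely are Serre-vanishing consequences for $d\gg 0$, are used. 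With that substitution your argument is complete.
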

 
 \begin{remark}({\bf{Non-Cohen-Macaulay modules}}). If $\HH{i}{X}{\OO_X(B)} = 0$ for $0 < i < n$, then the module $R(B;L_d)$ is Cohen-Macaulay for $d \gg0$, and hence
 \[   K_{p,q}(X, B; L_d) \ = \ 0 \ \ \text{for  }   p\, > \,  r_d -n. \]
 We have just seen that when $q = n+1$ this holds for any $B$. On the other hand, if $\HH{q-1}{X} {\OO_X(B)} \ne 0$ for some $2 \le q \le n$, then
 \[ 
 \HH{q-1}{X}{\Lambda^{r_d} M_{L_d} \otimes \OO_X(B+L) } \ = \  \HH{q-1}{X}{\OO_X(B) } \ \ne \ 0,
 \]
 and hence $K_{r_d - (q - 1),q}(X,B;L_d) \ne 0 $ for large $d$. \qed
 \end{remark}
 
 Turning to $K_{p,1}$, we resume the notation of $\S 2$. Thus we choose a suitably positive very ample class $H$, we fix a general divisor $\ol{X} \in \linser{H}$, and set
 \[
 \begin{gathered}V_d \ = \ \HH{0}{X}{\OO_X(L_d)} \\
    V^\pr_d \ = \ \HH{0}{X}{\OO_X(L_d) \otimes \II_{\ol{X}/X} }   
    \ \ , \ \ v_d^\pr \ = \ \dim \, V_d^\pr.
\end{gathered}
 \]
\begin{proposition} \label{Kp1.Proposition}
 If $d$ is sufficiently large, then 
\[ K_{p,1}(X,B;L_d) \ \ne \ 0\]
provided that $  \hh{0}{X}{\OO_X(B+H)} \le p  \le v_d^\pr - 1$. 
 \end{proposition}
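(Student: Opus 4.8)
The plan is to reduce the non-vanishing of $K_{p,1}(X,B;L_d)$ to a non-vanishing of a $K_{p,0}$-type group on the general hyperplane section $\ol{X}$, exactly in the spirit of the secant-plane machinery of \S\ref{Secant.Constructions.Section}, but adapted to weight one. First I would recall that, for $d\gg 0$, the vanishings \eqref{Van.Hypoth.Kosz.from.M} hold for $L_d$ and in addition $\HH{1}{X}{\OO_X(B)}=0$ is \emph{not} assumed, so I cannot directly invoke Proposition \ref{Non.Van.Wedge.p+1.Gives.Koszul} in weight $1$. Instead I would work with Koszul cycles directly: a class in $K_{p,1}(X,B;L_d)$ is represented by an element of $\Lambda^p V_d\otimes \HH{0}{X}{\OO_X(B+L_d)}$ killed by the Koszul differential into $\Lambda^{p-1}V_d\otimes \HH{0}{X}{\OO_X(B+2L_d)}$, modulo the image of $\Lambda^{p+1}V_d\otimes \HH{0}{X}{\OO_X(B)}$. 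The idea is to produce such a cycle supported ``away from'' a chosen hyperplane $\ol X\in\linser H$, using a secant construction relative to the linear subspace $\PP(V_d^\pr)\subseteq\PP(V_d)$ cut out by $\ol X$.

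The key steps, in order, would be: (1) Set $\ol V_d = V_d/V_d^\pr = \HH{0}{\ol X}{\OO_{\ol X}(\ol L_d)}$ for $d\gg0$, and note $\dim\ol V_d = O(d^{n-1})$ and $\dim V_d^\pr = v_d^\pr$. (2) Use the exact sequence $0\to M_{L_d}\otimes\OO_{\ol X}\to \ol V_d\otimes\OO_{\ol X}\,\oplus\,(\text{correction})$ — more precisely the splitting $M_{L_d}\otimes\OO_{\ol X}\cong V_d^\pr{}_{\ol X}\oplus M_{\ol L_d}$ from Lemma \ref{Splitting.Proposition} applied with $W=V$ (so $W^\pr = V_d^\pr$, $\ol W=\ol V_d$, and $\Sigma_W = M_V$, $\Sigma_{\ol W}=M_{\ol V}$). (3) Exploit the canonical surjection $\Lambda^{\ol v_d}M_{\ol L_d}\to\II_{\varnothing/\ol X}=\OO_{\ol X}$ (the ``$Z=\varnothing$'' case, where $\Sigma_{\ol W}=\ol V_d\otimes\OO_{\ol X}$ is trivial of rank $\ol v_d$, so in fact we just use that $\Lambda^{\ol v_d}(\ol V_d\otimes\OO_{\ol X})\cong\OO_{\ol X}$), together with the decomposition of $\Lambda^p$ of a direct sum, to build a map
\[
\Lambda^{p-\ol v_d}V_d^\pr\otimes\Lambda^{\ol v_d}M_{\ol L_d}\otimes\OO_{\ol X}(\ol B+\ol L_d)\ \lra\ \Lambda^p M_{L_d}\otimes\OO_{\ol X}(\ol B+\ol L_d)
\]
and chase the resulting cohomology diagram, reducing the surjectivity we need onto $\HH{0}{\ol X}{\OO_{\ol X}(\ol B+\ol L_d)}$ to Proposition \ref{Surjectivity.H0.Lemma} with $T_d=\ol V_d$, $t_d=\ol v_d=O(d^{n-1})$ — wait, that bound is not constant. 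So I would instead arrange the cycle to use only a \emph{constant-dimensional} trivial quotient of $M_{L_d}$, i.e. mimic the $q=2$ base case of Lemma \ref{Main.Inductive.Proposition}, where Proposition \ref{Surjectivity.H0.Lemma} is invoked with $t_d$ bounded. (4) Combine with the ``enlarging $W$'' trick of Theorem \ref{Second.Technical.Theorem}: having produced one non-zero $K_{p_0,1}$, pass to successively larger linear subspaces containing the base locus to sweep out all $p$ in the interval $\hh0XB+H\,,\,v_d^\pr-1$; the upper bound $v_d^\pr-1$ is exactly the dimension constraint from needing the enlarged subspace to still be contained in $\PP(V_d^\pr)$, and the lower bound $\hh0{\ol X}{\OO_{\ol X}(\ol B+\ol H)}=\hh0{X}{\OO_X(B+H)}$ (for $d\gg0$, by \eqref{Second.Cond.Eqn}) is the analogue of the ``$C_1 d^{q-1}$'' threshold in weight one, i.e.\ it comes from the $K_{p',0}$-non-vanishing on $\ol X$ supplied by Proposition \ref{Kp0.Proposition}.

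The main obstacle I expect is \textbf{step (3)}: reconciling the weight-one cycle construction with the requirement that Proposition \ref{Surjectivity.H0.Lemma} be applied through a quotient of bounded rank. The naive approach wedges in the full $\ol v_d$-dimensional space cut by the hyperplane, whose dimension grows like $d^{n-1}$, and then the Eagon--Northcott resolution argument in Proposition \ref{Surjectivity.H0.Lemma} requires vanishing of $H^i$ of $\OO_X(L_d+C-(t_d+i)H)$ with $t_d=\ol v_d$ unbounded, which fails. The fix is to observe that one does \emph{not} need all of $\ol V_d$: since $\OO_{\ol X}(\ol B+\ol H)$ is globally generated, a general subspace of $\ol V_d$ of dimension $\le c$ already gives a trivial quotient $M_{\ol L_d}\to T_d\otimes\OO_{\ol X}$ with the needed surjectivity onto $\HH0{\ol X}{\OO_{\ol X}(\ol B+\ol L_d)}$ — but then I must check that the corresponding Koszul cycle genuinely lands in $K_{p,1}$ rather than being a boundary, which is where the hypothesis $p\ge\hh0X{B+H}$ (equivalently $p\ge\hh0{\ol X}{\OO_{\ol X}(\ol B+\ol H)}$) together with the Green vanishing $\HH0{\ol X}{\Lambda^{p'}M_{\ol L_d}\otimes\OO_{\ol X}(\ol B+\ol H)}=0$ for $p'$ too large comes in, exactly as in the proof of Proposition \ref{Kp0.Proposition}. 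Once the cycle on $\ol X$ is pinned down as nonzero in $K_{p',0}(\ol X,\ol B+\ol H;\ol L_d)$, lifting it to a nonzero class in $K_{p,1}(X,B;L_d)$ and then propagating across the $p$-interval is formal, following Theorems \ref{First.Technical.Thm} and \ref{Second.Technical.Theorem}.
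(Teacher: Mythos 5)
There is a genuine gap here: the secant-sheaf machinery you are trying to adapt does not apply in weight one, and your own patches do not close the holes you (correctly) identify. The entire apparatus of Definition \ref{Def.Plane.Carries.Syzygies}, Theorems \ref{First.Technical.Thm}--\ref{Second.Technical.Theorem} and Proposition \ref{Surjectivity.H0.Lemma} is built on the identification $K_{p,q}\cong \HH{q-1}{X}{\Lambda^{p+q-1}M_V\otimes\OO_X(B+L)}$, which for $q=1$ requires $\HH{1}{X}{\OO_X(B)}=0$ --- a hypothesis the Proposition does not make, as you note at the outset but then ignore when you invoke those results anyway. Your step (3) is the second unclosed hole: the trivial quotient relevant to the subspace $\PP(V_d^\pr)$ has rank $\ol{v}_d=O(d^{n-1})$, so Proposition \ref{Surjectivity.H0.Lemma} is unavailable, and your fix (replace $\ol{V}_d$ by a general subspace of dimension $\le c$) changes the linear subspace, hence changes which homological degree $p$ you land in, wrecking the bookkeeping that is supposed to produce the interval $[\hh{0}{X}{\OO_X(B+H)},\,v_d^\pr-1]$. (Also, $\hh{0}{\ol X}{\OO_{\ol X}(\ol B+\ol H)}$ equals $\hh{0}{X}{\OO_X(B+H)}-\hh{0}{X}{\OO_X(B)}+\hh{1}{X}{\OO_X(B)}$, not $\hh{0}{X}{\OO_X(B+H)}$, so your proposed lower-bound identification is off.)

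The paper's proof avoids all of this and is far more elementary. One tensors the restriction sequence $0\to\OO_X(B)\to\OO_X(B+H)\to\OO_{\ol X}(\ol B+\ol H)\to 0$ into the Koszul complex, obtaining a commutative diagram whose columns are Koszul differentials $\delta$ from $\Lambda^{p+1}V_d\otimes H^0(-)$ to $\Lambda^{p}V_d\otimes H^0(-+L_d)$. When $p+1>\hh{0}{X}{\OO_X(B+H)}$, Green's vanishing theorem makes the two left-hand columns injective, and a snake-lemma chase shows that any $\alpha\in\Lambda^{p+1}V_d\otimes\HH{0}{X}{\OO_X(B+H)}$ with $r(\alpha)\ne 0$ and $\ol\delta\, r(\alpha)=0$ produces a non-zero class in $K_{p,1}(X,B;L_d)$. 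Such an $\alpha$ is exhibited directly: take $\alpha\in\Lambda^{p+1}V_d^\pr\otimes\HH{0}{X}{\OO_X(B+H)}$ with non-zero restriction to $\ol X$; since the sections in $V_d^\pr$ vanish on $\ol X$, the relation $\ol\delta\,\ol\alpha=0$ is automatic, and the existence of such $\alpha$ is exactly the condition $p+1\le v_d^\pr$. Your instinct to exploit $V_d^\pr$ and Green's vanishing at the threshold $p\ge\hh{0}{X}{\OO_X(B+H)}$ is the right one, but it should be implemented by this direct cycle-level diagram chase, not by forcing the $q\ge 2$ secant formalism.
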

 \noi More picturesquely, the Proposition asserts that $K_{p,1}(X,B;L_d) \ne 0$ for \[ O(1) \ \le \ p \ \le\ r_d - O(d^{n-1}).\] 

\begin{proof}
The issue is to produce an element of $\Lambda^{p}V_d \otimes H^0\big(\OO_X(B+L_d)\big)$ representing a non-trivial Koszul cohomology class.
To this end, consider the exact commutative diagram:
\Small
\[
\xymatrix{
0 \ar[r] &\Lambda^{p+1}V_d \otimes H^0\big(\OO_X(B)\big) \ar[r] \ar[d]^\delta &\Lambda^{p+1}V_d \otimes H^0\big(\OO_X(B+H)\big) \ar[r]^{r} \ar[d]^\delta & \Lambda^{p+1}V_d \otimes H^0\big(\OO_{\ol{X}}(\ol{B} + \ol{H})\big) \ar[d]^{\ol{\delta}}\\
0 \ar[r] &\Lambda^{p}V_d \otimes H^0\big(\OO_X(B+L_d)\big) \ar[r]  &\Lambda^{p}V_d \otimes H^0\big(\OO_X(B+H+L_d)\big) \ar[r]   & \Lambda^{p }V_d \otimes H^0\big(\OO_{\ol{X}}(\ol{B} + \ol{H}+ \ol{L_d})\big) .  
}
\]
\normalsize
If  $p + 1 >  \hh{0}{X}{\OO_X(B+H)} $ then the two vertical maps on the left are injective by virtue of Green's lemma \cite[Theorem 3.a.1]{Kosz1}. This being so, a diagram chase as in the snake lemma shows that it suffices to exhibit a  class
\[
0 \ \ne \ \alpha \ \in \ \Lambda^{p+1}V_d \otimes H^0\big(X, \OO_X(B+H)\big)
\]
having the properties:
\[  \ol{\alpha} \, =_\text{def}\, r(\alpha) \, \ne \, 0 \ \ , \ \ \ol{\delta}\ol{\alpha} \, = \, 0 . \]
But for this one can take any element
\[  \alpha \, \in \, \Lambda^{p+1} V^\pr_d \otimes H^0\big(X, \OO_X(B+H)\big) \]
whose image
\[ \ol{\alpha} \, \in \, \Lambda^{p+1} V^\pr_d \otimes H^0\big(\ol{X}, \OO_{\ol{X}}(B+H)\big)  \]
is non-zero, for then automatically $\ol{\delta}{\ol{\alpha}} = 0$.
\end{proof}

\section{Veronese Varieties} \label{Veronese.Vars.Section}
 
 In this section, we consider in more detail the case $X = \PP^n$. By modifying the arguments appearing in \S 2, we prove  effective non-vanishing results. At least in some  some cases (and conjecturally in all), the statements that we obtain are optimal. As a corollary, we establish a non-vanishing theorem for the Schur decomposition of the syzygies of Veronese varieties in characteristic zero.

Turning to details, let $U$ be a vector space of dimension $n + 1$, so that $\PP(U) = \PP^n$ is an $n$-dimensional projective space. We take
\[ L_d   \in   \linser{\OO_{\PP}(d)} \quad , \quad  B \in   \linser{\OO_{\PP}(b)} . \] It will be convenient to denote the corresponding  Koszul group $K_{p,q}(\PP^n, B;L_d)$  by 
\[   K_{p,q}(\PP^n, b;d) \quad \text{or } \quad K_{p,q}(\PP(U), b;d). \]
\begin{theorem} \label{Effective.Statement.Proj.Space}
Fix an index $1 \le q \le  n$, and assume that $b \ge 0$. If $d$ is sufficiently large, then \[ K_{p,q}(\PP^n,b;d) \ \ne \ 0\] for
\small
\[
\binom{d+q}{q} \, - \,  \binom{d-b-1}{q} \, - \, q   \ \le \  p \ \le \ \binom{d +n  }{n} \,  - \, \binom{d+n -q}{n-q} \, + \,  \binom{n+b}{n-q} \, - q-1.
\]
\normalsize
\end{theorem}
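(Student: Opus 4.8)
The plan is to run the same inductive machinery built in \S\ref{Secant.Constructions.Section} and \S\ref{Asympt.Nonvan.Section}, but now with all the cohomological computations made explicit for $X = \PP^n$, so that the boundedness estimates get replaced by exact binomial formulas. Concretely, I would choose $H = \OO_\PP(1)$ and take the hyperplane sections $\ol{X}_j$ to be a chain of coordinate hyperplanes, so that at every stage of the induction the ambient variety is again a projective space $\PP^{n-1}, \PP^{n-2}, \ldots$ of one smaller dimension, with $\ol{B}+\ol{H}$ becoming $\OO(b+1)$ on $\PP^{n-1}$. With $c = n+2-q$ as before, the secant subscheme $Z_d = D_1\cap\cdots\cap D_c$ would be cut out by hypersurfaces of degrees $1,\ldots,1,\,b+?,\,d-(c-1)$ chosen so that $\sum D_i \lin L_d + B - K_\PP$, i.e.\ $\sum \deg D_i = d + b + n + 1$; since $K_\PP = \OO(-n-1)$ this forces $D_{c-1}\in\linser{\OO(b+n)}$ (the analogue of $\linser{H+B-K_X}$) rather than $\linser{\OO(b+1)}$, which is where the extra $\binom{n+b}{n-q}$ term in the upper bound will come from. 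The vanishings \eqref{First.Cond.Eqn}--\eqref{Third.Cond.Eqn}, \eqref{Van.Hypoth.Kosz.from.M}, \eqref{Van.of.Hi}, and \eqref{Non.Van.Hypoth.Thm.1.9} are all immediate on $\PP^n$ by Bott/the computation $H^i(\PP^n,\OO(m)) = 0$ for $0<i<n$, so the only real work is the bookkeeping.

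The heart of the matter is to replace Lemma \ref{Estimate.wd.Lemma} and \eqref{Dim.vdprime} by exact counts. First, $v_d^\pr = \dim\HH{0}{\PP^n}{\OO(d-1)} = \binom{d-1+n}{n}$, so the upper end of the $p$-range from Theorem \ref{Second.Technical.Theorem}, namely $p \le (v_d^\pr + \ol w_d) + 1 - q$, becomes $\binom{d+n-1}{n} + \ol w_d + 1 - q$. One then computes $\ol w_d$: it is the dimension of the image of $\HH{0}{\PP^{n-1}}{\OO(d)}$ in $\HH{0}{\ol Z_d}{\OO(d)}$, where $\ol Z_d$ on $\PP^{n-1}$ is a complete intersection of type $(1^{c-2}, b+n, d-(c-1))$ of dimension $q-2$; using the Koszul resolution of $\ol Z_d$ and the explicit $\PP^{n-1}$-cohomology one gets a closed binomial expression. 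Similarly the bottom of the range, $p = w_d + 1 - q$, requires $w_d = \dim W_{0,d}$ (up to the choice $J_d = J_{0,d}$, so $a = 0$), i.e.\ the dimension of the image of $\HH{0}{\PP^n}{\OO(d)}$ in $\HH{0}{Z_d}{\OO(d)}$ with $Z_d$ a complete intersection of dimension $q-2$ inside $\PP^n$ of degree-vector $(1^{c-2},b+n,d-(c-1))$; a Koszul/Riemann--Roch computation should collapse this to $\binom{d+q}{q} - \binom{d-b-1}{q} - q$ after simplification. The combinatorial identities needed to put these in the stated form (e.g.\ manipulating $\binom{d+n}{n} - \binom{d+n-q}{n-q}$ as the rank of $M_{\OO(d)}$ restricted to a linear $\PP^q$) are the routine part I would not grind through here.

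The one genuinely delicate point — the analogue of Proposition \ref{Main.Inductive.Proposition} and the surjectivity \eqref{H0.Surjectivity.to.Verify} — is the base case $q = 2$ of the induction, where $\ol Z_d = \varnothing$ and I need the map $\HHH{0}{\ol X}{\Lambda^{\ol w_d} M_{\ol L_d}(\ol B + \ol L_d + \ol H)}\to\HHH{0}{\ol X}{\OO(\ol B + \ol L_d + \ol H)}$ to be surjective. On $\PP^{n-1}$ with everything a line bundle this follows from Proposition \ref{Surjectivity.H0.Lemma} exactly as in the proof of Lemma \ref{Main.Inductive.Proposition}, provided $\ol w_d$ (the rank of the relevant trivial quotient of $M_{\ol L_d}$) is bounded as $d\to\infty$ — and here it is \emph{not} automatically bounded, since for $b>0$ the subspace $J_d$ could be large. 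The fix is the same one used in \S\ref{Asympt.Nonvan.Section}: take $J_d = J_{0,d}$ exactly (constant $a = 0$ in \eqref{Second.Cond.Wd}), and observe that on $\ol X = \PP^{n-1}$ the quantity $\ol w_d$ that actually enters is the \emph{restricted} one controlled by \eqref{Restricted.W.Bdd.Dim.Eqn}, which in the $q=2$ case is genuinely bounded because $\ol Z_d$ is $0$-dimensional (empty). I expect this boundedness check — confirming that in the effective setting the hypothesis size parameter in Proposition \ref{Surjectivity.H0.Lemma}, $t_d \le c$, really is met at every stage of the $n$-step induction — to be the main obstacle, after which Theorem \ref{First.Technical.Thm} and Theorem \ref{Second.Technical.Theorem} apply verbatim and the theorem follows by assembling the binomial estimates above. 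Finally, the parenthetical claim that the result holds already for $d \ge n+1$ would be obtained by checking that none of the cited cohomology vanishings on $\PP^n$ require $d$ large beyond $d \ge c$, i.e.\ beyond $d\ge n+1$, but I would relegate those verifications to a remark rather than carry them out.
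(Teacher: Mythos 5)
Your overall strategy (run the secant machinery of \S\ref{Secant.Constructions.Section}--\ref{Asympt.Nonvan.Section} on $\PP^n$ with explicit binomial bookkeeping) is the right starting point, but as written it cannot produce either of the two stated bounds, and the paper's proof differs from yours at exactly these two points.

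First, the upper bound. Theorem \ref{Second.Technical.Theorem} yields non-vanishing only up to $p \le (v_d^\pr + \ol w_d) + 1 - q$ with $v_d^\pr = \binom{d+n-1}{n}$ and $\ol w_d = O(d^{q-2})$, i.e.\ up to $\binom{d+n}{n} - O(d^{n-1})$. The upper bound claimed in the Theorem is $\binom{d+n}{n} - \binom{d+n-q}{n-q} + \binom{n+b}{n-q} - q - 1 = \binom{d+n}{n} - O(d^{n-q})$, which for $q \ge 2$ exceeds what the secant construction can reach by a term of order $d^{n-1}$. No amount of sharpening the bookkeeping closes this gap: the paper first proves a weaker statement (Proposition \ref{Weaker.Non.Van.Veronese.Syz}, plus a separate argument for $q=1$ using collinear points, since the secant machinery needs $q\ge 2$), and then obtains the stated upper bound by \emph{duality} (Proposition \ref{Duality}), replacing $(p,q,b)$ by $(r_d-p-n,\, n-q,\, d-n-1-b)$. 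The term $\binom{n+b}{n-q}$ you attribute to the choice of $D_{c-1}$ is in fact the dual image of the $\binom{d-b^\pr-1}{q^\pr}$ term in the lower bound for the dual group; it does not appear in any primal computation.

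Second, the lower bound. If you follow the \S\ref{Asympt.Nonvan.Section} recipe verbatim, so that $D_{c-1}\in\linser{\OO(H+B-K_{\PP^n})}=\linser{\OO(b+n+2)}$ and $D_c\in\linser{\OO(d-(c-1))}$, then $Z_d$ is a complete intersection of type $(b+n+2,\ d-(c-1))$ in the linear space $\PP^q = D_1\cap\cdots\cap D_{c-2}$, and the resulting $w_d$ equals $\binom{d+q}{q} - \binom{d+q-b-n-2}{q} - \binom{n+1}{q} + \cdots$, which is strictly larger than $\binom{d+q}{q}-\binom{d-b-1}{q}-1$ for $q<n$. So your computation does \emph{not} "collapse" to the stated lower bound; it gives a genuinely weaker one. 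The paper's fix is to redistribute the degrees: take $D_{c-1}\in\linser{\OO(b+q+1)}$ and $D_c\in\linser{\OO(d)}$ (the total degree $d+b+n+1$ is preserved, and $H^{q-1}(\II_{Z_d}(d+b))\ne 0$ still holds because $\OO_{\PP^q}(d+b-(b+q+1)-d)=K_{\PP^q}$). This is what the paper means by exploiting the negativity of $K_{\PP^n}$, and it is precisely what makes $w_d = \binom{d+q}{q}-\binom{d-b-1}{q}-1$ come out exactly. Your concern about the boundedness of $\ol w_d$ in the base case is, by contrast, a non-issue once the correct degrees are used: one computes $\ol w_{d,n,b} = b+q$ directly.
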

\noi 
One can show that in fact the assertion holds as soon as $d \ge b+n +1 $ (Remark \ref{d.vs.b+n+1}).  
 When $q = 0$ or $q=n$, the   bounds on $p$ are the best possible (Remark \ref{Kp0.Kpn.Proj.Space}). It seems reasonable to hope that the statement is optimal in general, and that it holds moreover whenever $d \ge b + q + 1$.   As  noted in the Introduction, Weyman informs us that he independently obtained the case $b = 0$ of the Theorem, at least in characteristic $0$.

Before turning to the proof of the theorem,  we record a representation-theoretic corollary. Specifically, note that $K_{p,q}(\PP(U),b;d)$ is the cohomology of the Koszul-type complex
\[
\small
\ldots \lra \Lambda^{p+1}\big( S^dU\big) \otimes S^{(q-1)d +b}U \lra \Lambda^{p} \big( S^d U\big) \otimes S^{qd +b}U \lra \Lambda^{p-1} \big(S^dU \big)\otimes S^{(q+1)d +b}U \lra \ldots \ \ . 
\normalsize
\]
This shows that $K_{p,q}(\PP(U), b;d)$ is functorial in $U$, and we denote by $\mathbf{K} _{p,q}(b;d)$ the corresponding functor.\footnote{This notation was suggested by Andrew Snowden.} Thus
\[  \mathbf{K}_{p,q}(b;d)\big(U\big) \ = \ K_{p,q}(\PP(U), b;d)  \]
for any vector space $U$. Assuming that we are working over the complex numbers, one can argue as in \cite{Rubei} or \cite{Snowden} that one has a decomposition
\[
 \mathbf{K}_{p,q}(b;d) \ = \ \bigoplus_{\lambda \, \vdash \, (p+q)d + b} M_{\lambda}  \otimes_{\CC} \mathbf{S}_\lambda, \tag{*}
\]
where $\mathbf{S}_{\lambda}$ is the Schur functor associated to the partition $\lambda$, and $M_\lambda = M_{\lambda}(p,q,b;d)$ is a finite-dimensional vector space giving the multiplicity of $\bf{S}_\lambda$ in $ \mathbf{K}_{p,q}(b;d)$.  It is established by Rubei in \cite{Rubei} that as soon as
\[  K_{p,q}(\PP(U_0),b;d) \ \ne \ 0 \]
for some vector space $U_0$ of dimension $p+1$ or greater, then $K_{p,q}(U, b;d) \ne 0$ for every vector space of dimension $\ge p+1$.\footnote{The point here is that all the  $\mathbf{S}_\lambda$ appearing non-trivially in $K_{p,q}(\PP(U_0),b;d)$ are indexed by Young diagrams   having at most $p+1$ rows. And for any such $\lambda$, $\mathbf{S}_\lambda(U_0) \ne 0$ once $\dim U_0 \ge p + 1$.}  We will say in this case that $\mathbf{K}_{p,q}(b;d) \ne \mathbf{0}.$

Working still over the complex numbers, one has:
\begin{corollary} \label{Veronese.Stable.Syzygies.Cor}
Fix $q\ge 1$, and integers $b \ge 0$ and $d \ge q+b+1$. Assume that   \[
p \ \ge \ \binom{d+q}{q} \, - \,  \binom{d-b-1}{q} \, - \, q.\]
Then \[ \mathbf{K}_{p,q}(b;d) \ \ne \ \mathbf{0}. \]  \end{corollary}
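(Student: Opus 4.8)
The plan is to deduce Corollary~\ref{Veronese.Stable.Syzygies.Cor} directly from Theorem~\ref{Effective.Statement.Proj.Space} together with Rubei's stabilization result. The point is that Theorem~\ref{Effective.Statement.Proj.Space} already establishes the non-vanishing of $K_{p,q}(\PP^n,b;d)$ for a range of $p$ and for \emph{every} sufficiently large $n$; what we need for the corollary is merely a single space $U_0$ of dimension $\ge p+1$ on which the relevant Koszul group is non-zero, and then Rubei's theorem promotes this to $\mathbf{K}_{p,q}(b;d)\ne\mathbf{0}$, i.e.\ non-vanishing on all vector spaces of dimension $\ge p+1$.

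Concretely, first I would fix $q\ge 1$, $b\ge 0$, $d\ge q+b+1$, and a value of $p$ satisfying the stated lower bound
\[
p \ \ge \ \binom{d+q}{q} \, - \,  \binom{d-b-1}{q} \, - \, q.
\]
Next I would choose $n$ large enough that two things hold simultaneously: $n\ge p$ (so that $\dim U = n+1 \ge p+1$), and $n$ is large enough that $p$ also lies below the \emph{upper} bound in Theorem~\ref{Effective.Statement.Proj.Space}, namely
\[
p \ \le \ \binom{d +n  }{n} \, - \, \binom{d+n -q}{n-q} \, + \, \binom{n+b}{n-q} \, - \, q - 1.
\]
The key observation is that for fixed $d,q,b$ the right-hand side here grows without bound as $n\to\infty$ — indeed $\binom{d+n}{n}$ alone grows like a polynomial of degree $d$ in $n$, while the subtracted term $\binom{d+n-q}{n-q}$ grows only like a polynomial of degree $d-q$ — so the upper bound eventually exceeds any prescribed $p$. (Here one must be slightly careful and invoke the strengthened form of Theorem~\ref{Effective.Statement.Proj.Space} asserting validity as soon as $d\ge b+n+1$ is \emph{not} what we want; rather we want $n$ large and $d$ fixed, so we simply take $d$ "sufficiently large relative to the fixed data" in the sense of the theorem statement, or equally well invoke Remark~\ref{d.vs.b+n+1} with the roles arranged appropriately — in any case for the fixed $d,q,b$ and the chosen large $n$ the hypothesis of Theorem~\ref{Effective.Statement.Proj.Space} is met once $n$ is large.)

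With such an $n$ in hand, Theorem~\ref{Effective.Statement.Proj.Space} gives $K_{p,q}(\PP^n,b;d)\ne 0$, i.e.\ $\mathbf{K}_{p,q}(b;d)(U_0)\ne 0$ for $U_0$ of dimension $n+1\ge p+1$. By Rubei's theorem (quoted in the paragraph preceding the corollary), the non-vanishing of $\mathbf{K}_{p,q}(b;d)$ on one vector space of dimension $\ge p+1$ implies its non-vanishing on every vector space of dimension $\ge p+1$; in the paper's terminology this is exactly the assertion $\mathbf{K}_{p,q}(b;d)\ne\mathbf{0}$. The main (and essentially only) obstacle is the bookkeeping in the second step: one must verify that the upper bound in Theorem~\ref{Effective.Statement.Proj.Space} genuinely outpaces the lower bound as $n$ grows, so that the interval of valid $p$ sweeps out everything above the threshold $\binom{d+q}{q}-\binom{d-b-1}{q}-q$; this is a routine comparison of binomial coefficients of differing polynomial degrees in $n$, but it is the one place where a short argument rather than a pure citation is required.
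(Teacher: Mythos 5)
Your overall strategy — take $n$ large so that $\dim U = n+1 \ge p+1$ and so that $p$ falls below an upper bound that tends to infinity with $n$, then invoke Rubei's stabilization — is exactly the paper's strategy. But there is a genuine gap at the one place you yourself flag and then wave away: you cannot apply Theorem \ref{Effective.Statement.Proj.Space} here. That theorem is proved only for ``$d$ sufficiently large,'' and the strengthened range of Remark \ref{d.vs.b+n+1} is $d \ge b+n+1$. In the corollary $d$ is \emph{fixed} (only assumed $\ge q+b+1$) while $n$ must be taken large (at least $p$, hence potentially enormous compared to $d$); so the hypothesis $d \ge b+n+1$ fails, and ``take $d$ sufficiently large relative to the fixed data'' is not available because $d$ is part of the data. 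Your closing assertion that ``the hypothesis of Theorem \ref{Effective.Statement.Proj.Space} is met once $n$ is large'' has it backwards: the hypothesis becomes \emph{harder} to meet as $n$ grows. The paper explicitly points out this very obstruction in its proof of the corollary.

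The repair is to bypass Theorem \ref{Effective.Statement.Proj.Space} and instead quote Propositions \ref{Weaker.Non.Van.Veronese.Syz} (for $2 \le q \le n$) and \ref{Kp1.Veronese.Prop} (for $q=1$). These are valid under the hypotheses $d \ge b+q+1$ (respectively $d \ge b+2$), which do not involve $n$; they have precisely the lower bound $\binom{d+q}{q}-\binom{d-b-1}{q}-q$ appearing in the corollary (for $q=1$ this reduces to $b+1$); and their upper bounds contain the term $\binom{d+n-1}{n}=\binom{d+n-1}{d-1}$, which tends to infinity with $n$ for fixed $d\ge 2$. With that substitution the rest of your argument (choose $n$ large, apply Rubei) goes through. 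A minor side remark: your degree count for the upper bound of Theorem \ref{Effective.Statement.Proj.Space} is off — $\binom{d+n-q}{n-q}=\binom{d+n-q}{d}$ has degree $d$ in $n$, not $d-q$, so the difference $\binom{d+n}{n}-\binom{d+n-q}{n-q}$ has degree $d-1$ in $n$; it still tends to infinity, but this is moot once you switch to the propositions.
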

\begin{proof}[Proof of Corollary \ref{Veronese.Stable.Syzygies.Cor}]
Assume that $p$ satisfies the stated inequality, and suppose for the moment that one knew (as one expects) that Theorem \ref{Effective.Statement.Proj.Space}
 holds for every $d \ge b + q + 1$. If we take $n+1 = \dim U$ to be sufficiently large, then $p$ will also satisfy the upper bound appearing in the statement of the Theorem. This would imply that $K_{p,q}(\PP(U),b;d) \ne 0$, and hence that $\mathbf{K}_{p,q}(d) \ne \mathbf{0}$, as required. Unfortunately, we haven't established that \ref{Effective.Statement.Proj.Space} is valid in the expected range of $d$. However what is required for the Corollary is the non-vanishing of $K_{p,q}$ with the stated lower bound on $p$, and an upper bound that goes to infinity with $n$. This is provided by Propositions \ref{Weaker.Non.Van.Veronese.Syz} and \ref{Kp1.Veronese.Prop} below.
 \end{proof}

The proof of Theorem \ref{Effective.Statement.Proj.Space} proceeds in three steps. First, assuming that $2 
\le q \le n$, we use a variant of the arguments from \S 2 to prove a result with the stated lower bound on $p$ but a somewhat weaker upper bound. For this we exploit the negativity of the canonical bundle on $\PP^n$  to improve the constructions occurring in \S 2.  We then sketch how to extend the same statement to the case $q = 1$. Finally, we 
use duality to deduce Theorem  \ref{Effective.Statement.Proj.Space} as stated.
\begin{proposition} \label{Weaker.Non.Van.Veronese.Syz}
Fix $2 \le q \le n$, $b \ge 0$ and $d \ge b + q + 1$.  Then 
\[ K_{p,q}(\PP^n,b;d) \ \ne  \ 0\] for
\small
\[
\binom{d+q}{q} \, - \,  \binom{d-b-1}{q} \, - \, q   \ \le \  p \ \le \ \binom{d +n -1 }{n} \, + \,  \binom{d+q-1}{q-1} \, - \, \binom{d-b-2}{q-1} - q.
\]
\normalsize
\end{proposition}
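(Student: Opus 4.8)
The plan is to rerun the secant machinery of \S\ref{Secant.Constructions.Section} on $X=\PP^n$, but with the secant scheme $Z_d$ assembled from divisors whose degrees have been \emph{redistributed} so as to exploit the fact that $-K_{\PP^n}=\OO_{\PP^n}(n+1)$ is effective. Fix $c=n+2-q$, take $H=\OO_{\PP^n}(1)$ together with a general hyperplane $\ol{X}\in\linser{H}$, and choose general divisors
\[
D_1,\ldots,D_{c-2}\in\linser{\OO_{\PP^n}(1)}\ ,\quad D_{c-1}\in\linser{\OO_{\PP^n}(b+q+1)}\ ,\quad D_c\in\linser{\OO_{\PP^n}(d)}
\]
in sufficiently general position (simple normal crossings together with $\ol{X}$), and put $Z_d=D_1\cap\cdots\cap D_c$, a smooth complete intersection of dimension $q-2$. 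Since $\deg\big(\sum D_i\big)=(n-q)+(b+q+1)+d=d+b+n+1$, one has $\sum D_i\lin L_d+B-K_{\PP^n}$; and because $d\ge b+q+1$, every $\OO_{\PP^n}(L_d-D_i)$ is globally generated, so $\II_{Z_d/\PP^n}(L_d)$ is globally generated. One then takes $W_d=W_{0,d}=V_d/\HH{0}{\PP^n}{\II_{Z_d/\PP^n}(L_d)}$, a quotient whose projectivization meets $\PP^n$ exactly in $Z_d$. This redistribution --- the large divisor $D_c$ absorbs all of the degree $d$, while the ``correction'' divisor $D_{c-1}$ needs only degree $b+q+1$ rather than $\deg(H+B-K_{\PP^n})=b+n+2$ --- is exactly where $\PP^n$ improves on the rigid recipe of \S\ref{Asympt.Nonvan.Section}, and it is the only place the hypothesis $d\ge b+q+1$ enters.

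Next I would check that this data meets the hypotheses of Theorems \ref{First.Technical.Thm} and \ref{Second.Technical.Theorem}. All the required cohomology statements reduce to the completely explicit cohomology of line bundles on projective spaces: resolving $\II_{Z_d/\PP^n}$ by the Koszul complex on $E=\bigoplus_{i=1}^c\OO_{\PP^n}(-D_i)$ and using $\Lambda^cE\otimes\OO_{\PP^n}(B+L_d)\cong\OO_{\PP^n}(K_{\PP^n})$ yields $\HH{q-1}{\PP^n}{\II_{Z_d/\PP^n}(B+L_d)}\ne 0$ and $\HH{q-1}{\PP^n}{\II_{Z_d/\PP^n}(B+L_d+H)}=0$; the vanishings \eqref{Van.Hypoth.Kosz.from.M} hold automatically; $Z_d$ is a local complete intersection; and $\ol{v}_d-\ol{w}_d$ is of order $d^{n-1}$, so it exceeds $n$ for $d$ large. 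The surjectivity demanded by Definition \ref{Def.Plane.Carries.Syzygies} --- that $W_d$ carries weight $q$ syzygies of $B$ with respect to $L_d$ --- is then obtained by induction on $n$ exactly as in Lemma \ref{Main.Inductive.Proposition}: restricting the whole construction to $\ol{X}=\PP^{n-1}$ turns the problem into the \emph{same} one with $(b,q)$ replaced by $(b+1,q-1)$, leaving $c$ and the tuple of degrees $(1^{c-2},\,b+q+1,\,d)$ unchanged (note $\ol{B}+\ol{H}=\OO_{\PP^{n-1}}(b+1)$ and $(b+1)+(q-1)+1=b+q+1$), so the inductive hypothesis applies verbatim and Theorem \ref{First.Technical.Thm} supplies the step; the induction bottoms out at $q=2$, where $\ol{Z}_d=\varnothing$, $\Sigma_{\ol{W}_d}$ is a trivial bundle of bounded rank, and the needed surjectivity on $H^0$ is furnished by Proposition \ref{Surjectivity.H0.Lemma}.

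With the hypotheses in place, Theorem \ref{Second.Technical.Theorem} gives $K_{p,q}(\PP^n,b;d)\ne 0$ for all $p$ with $w_d+1-q\le p\le(v_d^\pr+\ol{w}_d)+1-q$, and it remains to evaluate the three dimensions. Here $v_d^\pr=\hh{0}{\PP^n}{\OO_{\PP^n}(d-1)}=\binom{d+n-1}{n}$. Because $Z_d$ lies in its linear span $\Lambda'=D_1\cap\cdots\cap D_{c-2}\cong\PP^q$, and restriction $\HH{0}{\PP^n}{\OO_{\PP^n}(d)}\to\HH{0}{\Lambda'}{\OO_{\Lambda'}(d)}$ is surjective, one gets $w_d=\dim W_{0,d}=\binom{d+q}{q}-\hh{0}{\Lambda'}{\II_{Z_d/\Lambda'}(d)}$; and since $Z_d\subseteq\Lambda'\cong\PP^q$ is a complete intersection of type $(b+q+1,d)$, its Koszul resolution gives $\hh{0}{\Lambda'}{\II_{Z_d/\Lambda'}(d)}=\binom{d-b-1}{q}+1$, so
\[
w_d+1-q\ =\ \binom{d+q}{q}-\binom{d-b-1}{q}-q,
\]
which is precisely the stated lower bound. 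For $\ol{w}_d$ one uses that $\ol{J}_d$ is the image of $\HH{0}{\PP^n}{\II_{Z_d/\PP^n}(L_d)}$ in $\ol{V}_d$ and that $\HH{0}{\PP^n}{\II_{Z_d/\PP^n}(L_d)}\cap V_d^\pr$ is the image of the multiplication map from $\HH{0}{\PP^n}{\II_{Z_d/\PP^n}(L_d-H)}$, so that
\[
\ol{w}_d\ =\ \dim\ol{V}_d\,-\,\big(\hh{0}{\PP^n}{\II_{Z_d/\PP^n}(L_d)}-\hh{0}{\PP^n}{\II_{Z_d/\PP^n}(L_d-H)}\big);
\]
evaluating the two $h^0$'s in the same way via the $\PP^q$-span and then collapsing everything with Pascal's identity produces $\ol{w}_d=\binom{d+q-1}{q-1}-\binom{d-b-2}{q-1}-1$, whence
\[
(v_d^\pr+\ol{w}_d)+1-q\ =\ \binom{d+n-1}{n}+\binom{d+q-1}{q-1}-\binom{d-b-2}{q-1}-q,
\]
the stated upper bound.

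The dimension bookkeeping above is pleasantly exact, so the real work lies elsewhere. The first genuine point is to carry the inductive ``$W_d$ carries weight $q$ syzygies'' argument --- the analogue of Lemma \ref{Main.Inductive.Proposition} --- through for this modified, non-rigid choice of $Z_d$: this requires re-checking, for the new secant data, every cohomological vanishing invoked in Theorems \ref{First.Technical.Thm} and \ref{Second.Technical.Theorem}, and verifying that the self-similar descent from $(\PP^n,b,q)$ to $(\PP^{n-1},b+1,q-1)$ really does close at $q=2$. The second, and I expect the more delicate, point is to obtain the conclusion for \emph{every} $d\ge b+q+1$ rather than merely for $d\gg 0$: several of the line bundles appearing in the Koszul computations sit right at the edge of their vanishing regions when $d=b+q+1$, and both the global generation of $\II_{Z_d/\PP^n}(L_d)$ and the inequality $\ol{v}_d-\ol{w}_d>n$ only barely hold (indeed, when $q=n$ one finds $\ol{v}_d-\ol{w}_d=n$ at the extreme value $d=b+n+1$), so the endpoint must be inspected separately, perhaps by a small modification of the construction there. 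Neither step is conceptually hard, but together they absorb essentially all of the remaining effort.
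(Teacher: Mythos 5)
Your proposal is correct and follows essentially the same route as the paper's proof: the same complete intersection $Z_d$ of multidegree $(1^{c-2},\,b+q+1,\,d)$, the same quotient $W_d$, the same self-similar descent to $(\PP^{n-1},b+1,q-1)$ bottoming out at $q=2$ via the Eagon--Northcott argument, and the same dimension counts via the span $\PP^q$ yielding identical bounds. The paper merely packages the induction with an auxiliary parameter $s=m-c+2$, and your closing caveats (the base-case surjectivity and the behavior at $d=b+q+1$) are exactly the points the paper's argument addresses or elides.
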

\begin{proof}
Working for the moment on an $m$-dimensional projective space $\PP^m$, fix $2 \le c \le m$ and an integer $a \ge 0$, and set $s = m -c + 2$.  We assume that $d \ge a+s+1$. Choose general divisors
\begin{equation} \label{Def.of.Di.on.Pm.Eqn}
\begin{gathered}
D_1 \ , \ D_2 \ , \ \ldots \ , \ D_{c-2} \ \in \ \linser{ \OO_{\PP^m}(1) } \\ 
D_{c-1} \ \in \ \linser{\OO_{\PP^m}(a+s+1) } 
\ \ , \ \ 
D_c \ \in \ \linser{ \OO_{\PP^m}(d) },\end{gathered} \notag
\end{equation}
and  let 
\begin{equation} \label{Def.of.ZonP.Eqn}
 Z_{d,m,a}\ = \ D_1 \, \cap \, \ldots \, \cap D_c  \notag
\end{equation}
be the complete intersection of the $D_i$. Thus $Z=Z_{d,m,a}$ is a smooth variety of dimension $m -c = s-2$, and one has
\[
\HH{s-1}{\PP^m}{ \II_{Z/\PP^m}(d+a)} \ \ne \ 0 \quad , \quad \HH{s-1}{\PP^m}{ \II_{Z/\PP^m}(d+a+1) }\ = \ 0.
\]
Moreover, $\II_{Z}(d)$ is globally generated. 
Put
\[   \ W_{d,m,a} \ = \ \Image \Big( \HH{0}{\PP^m}{\OO_{\PP^m}(d)} \lra \HH{0}{Z }{\OO_{Z }(d)}\Big).
\]
If $\dim Z > 0$  then  the map appearing on the right is surjective, and one finds that
\[    w_{d,m,a} \ =_{\text{def}} \ \dim W_{d,m,a} \ = \ \binom{d+s}{s} - \binom{d-a-1}{s} - 1 \]
for every $s \ge 2$.\footnote{For this and subsequent calculations, it is useful to observe that one can view $Z_{d,m,a}$ as being a complete intersection of type $(a+s+1,d)$ in the projective space $\PP^{s} = D_1 \cap \ldots \cap D_{c-2}$.}
Writing $M_{d,m}$ for the bundle \eqref{Seq.Def.M_V} on $\PP^m$ associated to $\OO_{\PP^m}(d)$, we get as in \S 1 a homomorphism
\[  
\sigma_{d,m,a} \, : \,  \bigwedge^{w_{d,m,a}}M_{d,m} \lra \II_{Z_{d,m,a}/\PP^m}.
\]

Now consider  a general hyperplane $ \PP^{m-1} \subseteq \PP^m.$ Then the intersection \[
\ol{Z}_{d,m,a} \ =_\text{def} \ Z_{d,m,a} \, \cap \, \PP^{m-1}\] can be viewed as realizing \[ Z_{d, m-1, a+1}\  \subseteq \ \PP^{m-1}.\]  Note that in this identification $c$   unchanged, and hence $s$ is decreased by $1$. Similarly,  with notation as in \eqref{Restrict.VW}, the vector space $W_{d,m,a}$ restricts on $\PP^{m-1}$ to 
\[
\ol{W}_{d,m,a} \ = \ W_{d, m-1,a+1} 
\]
provided that $c < m$. If $m = c$, so that $s = 2$, then $\dim Z_{d,m,a} = 0$, and
\[   \ol{W}_{d,m,a} \ = \ \ker \Big ( \HH{1}{\PP^m}{\II_{Z_{d,m,a}}(d-1)} \lra \HH{1}{\PP^m}{\II_{Z_{d,m,a}}(d)} \Big).
\]
One finds by a calculation on $\PP^2$ that in this case
\[  \ol{w}_{d,m,a} \ =_{\text{def}} \ \dim \ol{W}_{d,m,a} \ = \ a+2.
\]

Turning to the proof of the statement in the Proposition,   we start by applying this construction on $X = \PP^n$ with  $a = b$ and $c = n + 2 - q$ for the given index $2 \le q \le n$. We wish to show in this case   that $W_{d,n,b}$ satisfies the hypotheses of Theorems \ref{First.Technical.Thm} and \ref{Second.Technical.Theorem}. 
For this the plan is repeatedly to apply Theorem \ref{First.Technical.Thm}, and to argue by descending induction on $i$ ($0 \le i \le q-2$) that the homomorphisms 
\[
\HH{q-1-i}{\PP^{n-i}}{\Lambda^{w_{d,n-i,b+i}}M_{d,n-i} (b+d+i) }\lra \HH{q-1-i}{\PP^{n-i}}{ \II_{Z_{d,n-i,b+i}/\PP^m}(d+b+i)}
\]
determined by $\sigma_{d,n-i,b+i}$ are surjective. In view of the remarks in the previous paragraphs, the only issue is to  get the induction started when $i = q-2$. Here one has to prove the surjectivity of the map
\small
\[
\HH{0}{\PP^{n+1-q}}{\Lambda^{\ol{w}_{d, n+2-q,b+q-2}}M_{d,n+1-q}(b+d+q-1)} \lra \HH{0}{\PP^{n+1-q}}{\OO_{\PP^{n+1-q}}(b+d+q-1)}
\]
\normalsize
arising from 
\[
\ol{\rho}_{d,n+1-q,b+q-2}\, : \, M_{d,n+1-q} \lra \ol{W}_{d,n+2-q,b+q-2,} \otimes_{k} \OO_{\PP^{n+1-q}}.
 \]
The term on the right is a trivial vector bundle of rank $b+q$. Recalling that  $M_{d,n+1-q}(1)$ is globally generated, the required surjectivity follows via an Eagon--Northcott complex as in the proof of Proposition \ref{Surjectivity.H0.Lemma}.
 
 We may now apply Theorem \ref{Second.Technical.Theorem}. In the case at hand, it gives the non-vanishing of $K_{p,q}(\PP^n, b;d)$ in the range
 \[
 w_{d,n,b} + 1 - q \ \le \ p \ \le\   \hh{0}{\PP^n}{\OO_{\PP^n}(d-1)} \, + \, \ol{w}_{d,n,b} \, + \, 1 \, - \,  q.
 \]
 Noting that 
$
 \ol{w}_{d,n,b}= w_{d,n-1,b+1} = \binom{d-q-1}{q-1} - \binom{d-b-2}{q-1} - 1,
$
 the result follows. 
 \end{proof}
 
 We next indicate the extension of the previous result to the case $q =1$. 
 \begin{proposition} \label{Kp1.Veronese.Prop}
 Assume that $b \ge 0$ and that $d \ge b+2$. Then $K_{p,1}(\PP^n,b;d) \ne 0$ for
 \[
 b \, + \, 1 \ \le \ p \ \le \ \binom{d+n-1}{n} \, - \, 1.
 \]
 \end{proposition}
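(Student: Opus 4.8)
The plan is to run the hyperplane--section diagram chase of the proof of Proposition~\ref{Kp1.Proposition}, taking $H = \OO_{\PP^n}(1)$ and $\ol{X} = \PP^{n-1} \in \linser{H}$ general, but to exploit the positivity of $\OO_{\PP^n}$ --- concretely, $H^i(\PP^n, \OO_{\PP^n}(b)) = 0$ for $i > 0$ when $b \ge 0$ --- together with the explicit geometry of projective space in order to carry the ``not a boundary'' step through in the range $p \ge b+1$, rather than in the much weaker range $p \ge \hh{0}{\PP^n}{\OO_{\PP^n}(b+1)}$ that a bare application of Green's $H^0$--lemma would produce.

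First I would record the reductions. Since $b \ge 0$, $H^1(\PP^n, \OO_{\PP^n}(b)) = 0$, so by Proposition~\ref{Non.Van.Wedge.p+1.Gives.Koszul} the group in question equals $H^1\big(\PP^n, \Lambda^{p+1} M_{L_d} \otimes \OO_{\PP^n}(b)\big)$; and since $d \ge b+2$ we have $H^0(\OO_{\PP^n}(b+1-d)) = 0$, so the Koszul complex attached to the $\OO_{\PP^n}(b+1)$--twist has no boundaries in homological degree $p+1$ and its space of cycles there is exactly $Z := H^0\big(\PP^n, \Lambda^{p+1} M_{L_d} \otimes \OO_{\PP^n}(b+1)\big)$ (the hypothesis $d \ge b+2$ also guarantees that the asserted interval for $p$ is non-empty). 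Put $V^\pr_d = H^0(\PP^n, \OO_{\PP^n}(d-1)) \subseteq V_d$, the forms vanishing on $\ol{X}$; fix $0 \ne \omega \in \Lambda^{p+1} V^\pr_d$ --- which exists precisely when $p+1 \le \dim V^\pr_d = \binom{d+n-1}{n}$, the stated upper bound on $p$ --- and a section $s \in H^0(\PP^n, \OO_{\PP^n}(b+1))$ not vanishing on $\ol{X}$. As in Proposition~\ref{Kp1.Proposition}, the element $\alpha = \omega \otimes s$ satisfies $\ol{\delta}(\alpha \mid \ol{X}) = 0$, since the Koszul differential on $\ol{X}$ only involves sections restricted from $V^\pr_d$, all of which vanish on $\ol{X}$; hence $\delta\alpha$ descends to a Koszul cycle $\beta \in \Lambda^p V_d \otimes H^0(\OO_{\PP^n}(b+d))$, and everything comes down to showing that $\beta$ is not a Koszul boundary.

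This last step is the crux. On a general variety it is handled by Green's $H^0$--lemma, which only yields $p \ge \hh{0}{X}{\OO_X(B+H)}$; to reach $p \ge b+1$ on $\PP^n$ I would induct on $n$. The base case $n = 1$ is explicit: $M_{L_d} = \OO_{\PP^1}(-1)^{\oplus d}$, hence $\Lambda^{p+1}M_{L_d} \otimes \OO_{\PP^1}(b) = \OO_{\PP^1}(b-p-1)^{\oplus \binom{d}{p+1}}$, whose $H^1$ is non-zero exactly for $b+1 \le p \le d-1$, in agreement with the statement. For the inductive step, a relation $\beta = \delta\gamma$ forces, by the snake--lemma argument of Proposition~\ref{Kp1.Proposition} but now with \emph{no} injectivity hypothesis available, that $\alpha \in \ker r + Z$, where $\ker r$ is the subspace whose $\OO_{\PP^n}(b+1)$--factor vanishes on $\ol{X}$. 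Restricting to $\ol{X}$ and using the splitting $M_{L_d} \otimes \OO_{\ol{X}} \cong \OO_{\ol{X}}^{\,\dim V^\pr_d} \oplus M_{\ol{L}_d}$ of Lemma~\ref{Splitting.Proposition}, the class $\alpha \mid \ol{X} = \omega \otimes (s \mid \ol{X})$ lands in the direct summand $\Lambda^{p+1} V^\pr_d \otimes \OO_{\ol{X}}(b+1)$ (the top exterior power of the trivial part); so one must show that the restriction map kills $Z$ on that summand. Unwinding the connecting homomorphism of $0 \to \Lambda^{p+1}M_{L_d}(b) \to \Lambda^{p+1}M_{L_d}(b+1) \to \Lambda^{p+1}M_{L_d}(b+1) \mid \ol{X} \to 0$ through the splitting, and using $H^i(\PP^n, \OO_{\PP^n}(\text{positive})) = 0$ for $i \ge 1$ to identify the intermediate cohomology, this reduces to the non-vanishing of the groups $K_{p-j,1}(\PP^{n-1}, b; d)$ for the admissible exterior degrees $j$ --- which is precisely the inductive hypothesis --- and keeping track of which $j$ occur then produces the interval $b+1 \le p \le \binom{d+n-1}{n}-1$. (When $n \ge 3$ and $d \ge b+n+1$ one can instead combine Proposition~\ref{Weaker.Non.Van.Veronese.Syz} with the duality of Proposition~\ref{Duality}, but the inductive argument above has the advantage of being uniform in $n$ and in $d \ge b+2$.)
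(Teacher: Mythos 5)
Your reductions and the base case $n=1$ are correct, and you have correctly isolated the crux: with $\alpha=\omega\otimes s$, the descended cycle $\beta$ is a Koszul boundary if and only if $r(\alpha)=\omega\otimes(s|_{\ol{X}})$ lies in the image of the restriction map $\rho:\HH{0}{\PP^n}{\Lambda^{p+1}M_{L_d}\otimes\OO(b+1)}\to\HH{0}{\ol{X}}{\Lambda^{p+1}M_{L_d}\otimes\OO(b+1)|_{\ol{X}}}$. The gap is the inductive step. You assert that showing $\omega\otimes(s|_{\ol{X}})\notin\mathrm{Im}(\rho)$ ``reduces to the non-vanishing of the groups $K_{p-j,1}(\PP^{n-1},b;d)$,'' but no such reduction is carried out, and I do not believe one exists in the form stated: what is needed is control of the image of $\rho$ inside the summand $\Lambda^{p+1}V'_d\otimes\HH{0}{\PP^{n-1}}{\OO(b+1)}$ of the splitting of Lemma \ref{Splitting.Proposition} --- that is, a statement about which sections of the trivial summand are hit by global sections of $\Lambda^{p+1}M_{L_d}(b+1)$ on $\PP^n$ --- and non-vanishing of weight-one Koszul cohomology on the hyperplane carries no information about that image. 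Note also that the long exact sequence identifies $\mathrm{coker}(\rho)$ with the kernel of the multiplication map $K_{p,1}(\PP^n,b;d)\to K_{p,1}(\PP^n,b+1;d)$, so your reformulation is essentially as hard as the original statement; the class $\alpha$ does no real work until the image of $\rho$ is pinned down. If one instead tries to run the induction in the opposite direction --- lifting a nonzero class from the summand $\Lambda^jV'_d\otimes K_{p-j,1}(\PP^{n-1},b;d)$ of $\HH{1}{\ol{X}}{\Lambda^{p+1}(M_{L_d}|_{\ol{X}})\otimes\OO(b)}$ to $\HH{1}{\PP^n}{\Lambda^{p+1}M_{L_d}\otimes\OO(b)}$ --- the obstruction lives in $\HH{2}{\PP^n}{\Lambda^{p+1}M_{L_d}\otimes\OO(b-1)}$, a weight-two Koszul group that does not vanish for the large values of $p$ in your range, so that version does not close either.

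For comparison, the paper's proof avoids the hyperplane induction entirely and instead uses the secant machinery of \S\ref{Secant.Constructions.Section}: take $Z\subseteq\PP^n$ to be $b+2$ collinear points, so that $\HH{1}{\PP^n}{\II_{Z/\PP^n}(b)}\ne 0$ while $\HH{1}{\PP^n}{\II_{Z/\PP^n}(b+1)}=0$, and set $W=\HH{0}{Z}{\OO_Z(d)}$, of dimension $w=b+2$. A general hyperplane misses the finite set $Z$, so $W'=W$ and $\ol{W}=0$; the surjectivity required by Theorem \ref{First.Technical.Thm} is then automatic (Remark \ref{Wbar.Eqs.Zero.Remark}), giving $K_{b+1,1}(\PP^n,b;d)\ne 0$, and the enlargement argument of Theorem \ref{Second.Technical.Theorem} then sweeps out the whole interval $b+1\le p\le\binom{d+n-1}{n}-1$. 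Finally, your closing parenthetical does not rescue the statement as written, since duality combined with Proposition \ref{Weaker.Non.Van.Veronese.Syz} requires $d\ge b+n+1$ (and $n\ge 3$ for the relevant dual weight to be at least $2$), not $d\ge b+2$.
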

 \begin{proof}[Sketch of Proof] Since $\HH{1}{\PP^n}{\OO_{\PP^n}(b))} = 0$, it is enough (by Proposition \ref{Non.Van.Wedge.p+1.Gives.Koszul}) to show  that 
 \[
 \HH{1}{\PP^n}{\Lambda^{p+1}M_d \otimes \OO_{\PP^n}(b) } \ \ne \ 0 
 \]
 for $p$ in the stated range, where $M_d$ denotes the vector bundle \eqref{Seq.Def.M_V}  associated to $\OO_{\PP^n}(d)$. For this, we take $Z \subseteq \PP^n$ to consist of $b+2$ collinear points, so that 
  \[
\HH{1}{\PP^n}{ \II_{Z/\PP^n}(b)} \ \ne \ 0 \quad , \quad \HH{1}{\PP^n}{ \II_{Z/\PP^n}(b+1) }\ = \ 0. \]
We apply the constructions of \S 2 (or the previous proof), with 
 \begin{align*}
 W \ &= \ \Image \  \Big( \HH{0}{\PP^n}{\OO_{\PP^n}(d)} \lra \HH{0}{Z }{\OO_{Z }(d)}\Big) \\ &= \ \HH{0}{Z}{\OO_Z(d)}. 
 \end{align*}
 As above,  this gives rise to a mapping
 \[
 \HH{1}{\PP^n}{ \Lambda^{w} M_d \otimes \OO_{\PP^n}(b) } \lra \HH{1}{\PP^n}{\II_{Z/\PP^n}(b)}, \tag{*}
 \]
where $w = \dim W = b+2$, whose surjectivity we wish to establish. To this end, observe that with notation as in \eqref{Restrict.VW}, one has 
\[
W^\pr \ = \ W \ \ , \ \ \ol{W} \ = \ 0.
\]
This being so, the homomorphism $\ol{\sigma}$ appearing in  Theorem \ref{First.Technical.Thm} is automatically surjective and then the argument in that proof applies without further ado (cf.\ Remark \ref{Wbar.Eqs.Zero.Remark}). This shows that $K_{p,1}(\PP^n,b;d) \ne 0$ for $p = b+1$. But now as in the proof of Theorem \ref{Second.Technical.Theorem}, we can replace $W$ by a larger quotient $U$ of $\HH{0}{\PP^n}{\OO_{\PP^n}(d)}$, having the same properties, with $u = \dim U$ taking any value $b+1 \le u \le \binom {n+d-1}{n}$. The proposition follows.  
 \end{proof}

\begin{remark}[$K_{p,0}$ and $K_{p,n}$] \label{Kp0.Kpn.Proj.Space}
It follows from (the proof of) Proposition \ref{Kp0.Proposition} that if $d \ge b+1$, then
\[ K_{p,0}(\PP^n,b;d) \ \ne \ 0 \] if and only if
\[
0 \ \le \ p \ \le \ \binom{b+n}{n} - 1.
\]
By duality (Proposition \ref{Duality}) this implies that if $d \ge b + n +1$, then
\[
K_{p,n}(\PP^n,b;d) \ \ne \ 0 
\]
if and only if
\[
\binom{d+n}{n} \, - \, \binom{d-b-1}{n} \, - \, n \ \le \ p \ \le \ \binom{d+n}{n} \, - \, n  - 1
\]
This shows that the inequality in Theorem \ref{Effective.Statement.Proj.Space} is optimal when $q = 0$ and $q =n$. 
\end{remark}

\begin{proof}[Proof of Theorem \ref{Effective.Statement.Proj.Space}] The case $q = n$ having been established in the previous Remark, we assume that $1 \le q \le n-1$. By Proposition \ref{Duality}, $K_{p,q}(\PP^n,b;d)$ is dual to $K_{p^\pr, q^\pr}(\PP^n, b^\pr;d)$ where
\[
p^\pr \ = \ r_d \, - \, p \, - \, n \ \ , \ \ q^\pr \ = \ n \, - \, q \ \ , \ \ b^\pr \  = \ d \, - \, n \, - \, 	1 \, - \, b.
\]
Applying Propositions \ref{Weaker.Non.Van.Veronese.Syz} and \ref{Kp1.Veronese.Prop} to the latter group, one deduces (after a computation) that $K_{p,q}(\PP^n,b;d) \ne 0$ when
\begin{equation}  \label{Big.Eqn1}
\begin{aligned}
p \ &\ge \binom{d+n-1}{n-1}   \, - \, \binom{d+n-q-1}{n-q-1} \, + \, \binom{n+b-1}{n-q-1} \, - \, q \, - \, 1   \\
 p \ &\le  \ \binom{d +n  }{n} \,  - \, \binom{d+n -q}{n-q} \, + \,  \binom{n+b}{n-q} \, - q-1 . 
\end{aligned}
\end{equation} The second inequality is exactly the upper bound on $p$ appearing in Theorem \ref{Effective.Statement.Proj.Space}. So it is enough to show that the right-hand side of the first inequality is $\le$ the upper bounds appearing in the statements of Propositions \ref{Weaker.Non.Van.Veronese.Syz} and \ref{Kp1.Veronese.Prop}. But this is clear when $d \gg 0$ since the quantity appearing above  is $O(d^{n-1})$ while the expressions in \ref{Weaker.Non.Van.Veronese.Syz} and \ref{Kp1.Veronese.Prop} are $O(d^n)$. 
\end{proof}

\begin{remark} \label{d.vs.b+n+1}
With some effort, one can show that the right-hand side of the first inquality in  \eqref{Big.Eqn1} is majorized by the upper bounds in \ref{Weaker.Non.Van.Veronese.Syz} and \ref{Kp1.Veronese.Prop}  as soon as $d \ge b+ n+1$. It follows that the statement of Theorem \ref{Effective.Statement.Proj.Space} holds in fact for every $d \ge b+ n+1$. However we will not reproduce the calculations here. In any event, we expect that one should only need $d \ge b + q + 1$ in the Theorem. \end{remark}

%
%
%
%
\section{Conjectures and Open Questions} \label{Conjs.Open.Problems.Section}

In this section, we present some problems and conjectures.\footnote{Since the paper was originally written, there has been some further work on some of the questions posed here. We give pointers to these new developments in footnotes.}  Unless otherwise stated, we keep notation as in the body of the paper. Thus $X$ is a smooth projective variety of dimension $n$, 
\[  L_d \ = \ dA + P \]
where $A$ is an ample   and $P$ is an arbitrary divisor, and $B$ is some fixed divisor in whose syzygies we are interested.
As before, $r_d = r(L_d)$. 
 
To begin with, it would be extremely interesting to know whether the lower bounds in our non-vanishing statements have the best possible shape. Based largely on optimism, we hope that they do:
\begin{conjecture} \label{Vanishing.Kpq.Conjecture}
 Fix $2 \le q \le n$. 
One can find a constant $C > 0$ with the property that if $d$ is sufficiently large then 
\[   K_{p,q}(X, B; L_d ) \ = \ 0 \ \ \text{for } \ \ p \ \le \ C \cdot d^{q-1}.\]
\end{conjecture}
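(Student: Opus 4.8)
The natural first move is to translate the statement into coherent cohomology. For $d\gg 0$ the line bundle $L_d$ satisfies the vanishings \eqref{Van.Hypoth.Kosz.from.M}, so by Corollary \ref{Higher.Cohom.Interpr.Kpq}
\[ K_{p,q}(X,B;L_d)\ =\ \HH{q-1}{X}{\Lambda^{p+q-1}M_{L_d}\otimes\OO_X(B+L_d)}, \]
and the Conjecture asserts that $\HH{q-1}{X}{\Lambda^{m}M_{L_d}\otimes\OO_X(B+L_d)}=0$ for every $m\le C\cdot d^{q-1}$. When $\HH{i}{X}{\OO_X(B)}=0$ for $0<i<n$ one can run this through the duality of Proposition \ref{Duality}, which shows the Conjecture to be equivalent to the vanishing of $K_{p',\,n+1-q}(X,K_X-B;L_d)$ for all $p'$ within $O(d^{q-1})$ of the end of the resolution; in this guise it says precisely that the non-vanishing intervals of Theorem \ref{Main.NonVan.Statement.Section.2} are asymptotically sharp.

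The weaker vanishing in the range $p=O(d)$ holds for every $q$, by a regularity argument. Since $M_{L_d}\otimes\OO_X(H)$ is globally generated for $d\gg 0$ (see the proof of Proposition \ref{Surjectivity.H0.Lemma}), the bundle $M_{L_d}$ has bounded Castelnuovo--Mumford regularity relative to $\OO_X(H)$, independently of $d$, whence $\mathrm{reg}_{\OO_X(H)}(\Lambda^{m}M_{L_d})=O(m)$; consequently $\HH{q-1}{X}{\Lambda^{m}M_{L_d}\otimes\OO_X(B+L_d)}=0$ as soon as $B+L_d$ compensates that regularity, i.e. for $m\le C\cdot d$. This settles the case $q=2$ of the Conjecture and yields property $(N_p)$ for $p=O(d)$ in general. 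For $q\ge 3$ the difficulty is that a regularity estimate forces \emph{every} higher cohomology group to vanish at once and therefore cannot reach beyond $p=O(d)$, whereas the Conjecture concerns only the single group $H^{q-1}$ and asks for vanishing out to the much larger value $p=O(d^{q-1})$.

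It is tempting to try to close this gap by imitating the inductive machinery of \S\ref{Secant.Constructions.Section} and \S\ref{Asympt.Nonvan.Section}. Passing to a general $\ol X\in\linser{H}$ and using the splitting $M_{L_d}|_{\ol X}\cong (V^\pr_d\otimes\OO_{\ol X})\oplus M_{\ol L_d}$ of Lemma \ref{Splitting.Proposition}, the restriction exact sequence reduces the vanishing of $\HH{q-1}{X}{\Lambda^{m}M_{L_d}\otimes\OO_X(B+L_d)}$ to a vanishing of weight $q$ on $\ol X$ (a candidate for an induction on $\dim X$) together with that of a ``correction term'' $\HH{q-1}{X}{\Lambda^{m}M_{L_d}\otimes\OO_X(B+L_d-kH)}$ on $X$; but, unwinding the twists, this correction term is itself the weight-$q$ Koszul group $K_{m-q+1,q}(X,B-kH;L_d)$, with $B$ only replaced by a more negative divisor, and it is again governed solely by the $O(d)$ regularity bound --- so this route circles back without enlarging the range. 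The heart of the Conjecture for $q\ge 3$ is thus to pass from $O(d)$ to $O(d^{q-1})$, and this appears to require information about $M_{L_d}$ that is not visible through global generation: for instance semistability of $M_{L_d}$ together with a Mehta--Ramanathan-type restriction theorem controlling the cohomology of $\Lambda^\bullet M_{L_d}$ on general complete-intersection curves, a suitable partial positivity of $M_{L_d}$, or --- in the Veronese case $X=\PP^n$ --- the $\mathrm{GL}$-equivariant and representation-stability techniques of \cite{Snowden} (cf.\ also \cite{Rubei}). Controlling $\HH{q-1}{X}{\Lambda^{m}M_{L_d}\otimes\OO_X(B+L_d)}$ uniformly in the window $m\le C\cdot d^{q-1}$, $d\gg 0$, is, we believe, the essential and as yet unresolved point.
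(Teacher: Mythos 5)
This statement is a \emph{conjecture}: the paper offers no proof of it, and indeed remarks that ``the general case of the Conjecture seems quite challenging.'' Your proposal is therefore correctly calibrated --- you do not claim a proof, you establish (in outline) exactly what is known, and you identify the genuine obstruction. Your reduction via Corollary \ref{Higher.Cohom.Interpr.Kpq} and your regularity argument giving $K_{p,q}(X,B;L_d)=0$ for $q\ge 2$ and $p\le O(d)$ is in substance the argument of \cite{SAD} that the paper itself cites as settling the case $q=2$ (note that the paper restricts that remark to $k=\CC$; your appeal to bounded regularity of $\Lambda^m M_{L_d}$ is cleanest in characteristic zero). Your diagnosis of why the secant/restriction induction of \S\ref{Secant.Constructions.Section}--\S\ref{Asympt.Nonvan.Section} cannot be reversed to give vanishing beyond $O(d)$ is also sound: that machinery produces nonvanishing classes, and the correction terms you identify are again only controlled by the linear regularity bound.

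The one piece of evidence in the paper that your discussion misses is the case where $B-K_X$ is represented by a nonzero effective divisor $D$. Then $H^n(X,\OO_X(B))=0$, and arguing as in Proposition \ref{Duality} one finds that $K_{p,n}(X,B;L_d)$ is dual to $K_{r_d-p-n,0}(X,L_d-D;L_d)$; by Proposition \ref{Kp0.Proposition} the latter vanishes for
\[
p \ \le \ h^0(L_d)\,-\,h^0(L_d-D)\,-\,n \ = \ O(d^{n-1}),
\]
which verifies the conjectured bound $O(d^{q-1})$ in full for $q=n$ --- the only instance known to the authors where the vanishing genuinely exceeds $O(d)$. Your duality remark points in this direction but stops at reformulation; pushing it through the $K_{p,0}$ computation is what yields this additional case. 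With that supplement, your assessment of the status of the conjecture agrees with the paper's.
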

 \noi When $q = 2$ and $k = \CC$, this follows for instance from the results of \cite{SAD}. The one other piece of evidence arises when  $B - K_X$ is represented by a non-zero effective divisor $D$. In this case $H^n(X,B) = 0$, and one finds as in Proposition \ref{Duality} that $K_{p,n}(X,B;L_d)$ is dual to $K_{r_d -p-n,0}(X, L_d-D; L_d)$. But by Proposition \ref{Kp0.Proposition}, this group vanishes when
 \[
 p \ \le \ h^0(L_d) \, - \, h^0(L_d - D) \, - \, n \ = \ O(d^{n-1}).
 \]
 However the general case of the Conjecture seems quite challenging. 
 
 As we saw in Proposition \ref{Kp0.Proposition}, once $d$ is sufficiently large  the vanishing or non-vanishing of $K_{p,0}(X,B;L_d)$ does not depend on $d$. This suggests
 \begin{problem} \label{Small.Syzygies.Question}
 For small $p$ -- for instance, $p \ll h^0(B)$ -- is the set
 \[
     \{  \ p \ \big | \  K_{p,1}(X,B;L_d) \ne 0 \,  \}
 \] 
 independent of $d$ provided that $d$ is large? If so, is there a geometric explanation -- at least under suitable hypotheses on $B$ -- for which of  these groups are non-vanishing?\footnote{Note that if  $q \ge 2$, then it follows for instance from \cite{SAD} that $K_{p,q}(X, B; L_d) = 0$ for $d \ge O(p)$.}
 \end{problem}
 \noi As a very simple example of the sort of thing one might expect, consider the case $p=0$. Then 
 $ K_{0,0} (X, B; L_d)  \ne 0$   if and only if $ h^0(B) \ne 0$, and  if $d$ is sufficiently large, then
\[
 K_{0,1} (X, B; L_d) \ \ne  \ 0   \quad \Longleftrightarrow \quad \linser{B} \ \text{has basepoints}. 
\]
 (In fact, for $d \gg 0$ the maps
 \[
\HH{0}{X}{\OO_X(B)} \otimes \HH{0}{X}{\OO_X(mL_d)}   \lra \HH{0}{X}{\OO_X(B+ mL_d)}
 \]
 are surjective for every $m > 0$ if and only if $\OO_X(B)$ is globally generated.) When $X$ is a curve and $B=K_X$ a precise conjecture along the lines of Problem \ref{Small.Syzygies.Question} was proposed in \cite{GL3}, and   established in many cases by Aprodu and Voisin \cite{AproduVoisin}, \cite{Aprodu}. But even for curves one could ask whether there are statements for other divisors $B$. 
  
  It would be very interesting to understand better the betti numbers of the resolution of $R(B;L_d)$ as $d$ grows.
\begin{problem} \label{DimKpq}
What can one say about the asymptotic behavior of the dimensions 
\[ \dim  K_{p,q}(X,B;L_d)\] as functions of $p$, $q$ and $d$?
\end{problem}
\noi Even in the well-studied case of curves, it's not clear to us what to expect here.\footnote{Some results and conjectures about this problem will appear in a forthcoming paper of Erman, Lee and the authors.}  On a fixed projective space, the work \cite{ES} of Eisenbud--Schreyer on the Boij--Soderberg conjecture gives an overall picture of the betti tables that can arise. As Schreyer and Erman point out, this suggests:
\begin{problem}
As $d$ grows, can one say anything about where the betti tables of the modules $R(B; L_d)$ sit in the Boij--Soderberg cone?
\end{problem}
\noi To begin with,   one would have to find a good way to compare the cones in question as $d$ varies.

Our effective results for Veronese embeddings of projective space raise several questions. Perhaps the most interesting is:
\begin{conjecture}
Fix $1 \le q \le n$, $b \ge 0$, and $d \ge b+ q + 1$. If $p$ lies outside the bounds appearing in Theorem \ref{Effective.Statement.Proj.Space}, then 
\[  K_{p,q}(\PP^n, b;d) \ = \ 0. \] 
\end{conjecture}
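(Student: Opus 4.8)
The plan is to reduce the conjecture, by means of the Serre-type duality of Proposition~\ref{Duality}, to a single ``lower-bound'' vanishing statement, and then to attack that statement either by an induction on $n$ or --- in characteristic zero --- by the kind of representation-theoretic computation that underlies Weyman's results on Veronese syzygies. Since every line bundle on $\PP^n$ has vanishing intermediate cohomology, the hypotheses of Proposition~\ref{Duality} hold on $\PP^n$ for all integers $b,d$, and it gives an isomorphism $K_{p,q}(\PP^n,b;d)\cong K_{p',q'}(\PP^n,b';d)^{*}$ with
\[
 p' \ = \ r_d - p - n, \qquad q' \ = \ n - q, \qquad b' \ = \ d - n - 1 - b.
\]
A direct computation with binomial coefficients shows that under $p\leftrightarrow r_d-p-n$ the upper bound on $p$ in Theorem~\ref{Effective.Statement.Proj.Space} for the triple $(q,b)$ goes over exactly to the lower bound for $(q',b')$. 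Hence the conjecture is equivalent to proving
\[
 K_{p,q}(\PP^n,b;d) \ = \ 0 \qquad\text{whenever}\qquad p \ < \ \binom{d+q}{q}-\binom{d-b-1}{q}-q
\]
for every $1\le q\le n$ and every integer $b$ (the binomial read as the usual polynomial in $b$, so that for $b\le-q-1$ there is nothing to prove). The case $b\ge0$ is the essential one; the finitely many mildly negative values of $b$ brought in by the reduction are to be handled the same way. The extremal cases are already known --- $q=n$ by Remark~\ref{Kp0.Kpn.Proj.Space}, and the $q'=0$ instances entering the reduction by Proposition~\ref{Kp0.Proposition} --- so the content lies in the range $1\le q\le n-1$.

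By Proposition~\ref{Non.Van.Wedge.p+1.Gives.Koszul} the displayed vanishing is the statement that $H^{1}\big(\PP^n,\Lambda^{p+1}M_d\otimes\OO_{\PP^n}(b+(q-1)d)\big)=0$ for $p$ below the threshold, where $M_d=M_{\OO_{\PP^n}(d)}$; and that threshold is exactly the value $w_{d,n,b}+1-q$ produced by the secant construction of \S\ref{Secant.Constructions.Section} in the proof of Proposition~\ref{Weaker.Non.Van.Veronese.Syz}, so the conjecture asserts that the weight-$q$ strand of the resolution begins precisely at the smallest secant plane that can carry such a syzygy. Two routes suggest themselves. In characteristic zero one can decompose $K_{p,q}(\PP(U),b;d)$ into $\mathrm{GL}(U)$-representations, as in \cite{Rubei} and \cite{Snowden}, and compute it by the geometric technique: resolve the Veronese coordinate ring by the pushforward of a Koszul complex and express $K_{p,q}$ as the cohomology of homogeneous bundles, whose acyclicity is governed by Bott's theorem, so that the desired vanishing becomes a (purely combinatorial) Bott-acyclicity statement. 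Alternatively one can try to induct on $n$ by restriction to a general hyperplane $\PP^{n-1}\subset\PP^n$, comparing $\Lambda^{p+1}M_d$ with its restriction and with the analogous bundle on $\PP^{n-1}$ --- the comparison complicated by the fact that $H^{0}(\OO_{\PP^n}(d))$ does not map isomorphically onto $H^{0}(\OO_{\PP^{n-1}}(d))$, so that both a lower twist of $b$ and extra trivial summands intervene --- and then closing the induction by a downward induction on $b$, anchored at the values of $b$ for which $R(\OO(b);L_d)$ is so simple that its syzygies vanish for formal reasons. Either way, the delicate point is that the threshold must be shown to propagate correctly: after the duality above it mixes binomials in $d$ with binomials in $n$, and it has to be matched exactly against the analogous thresholds one dimension, or one twist, down.

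The main obstacle is that the machinery of \S\S\ref{Secant.Constructions.Section}--\ref{Veronese.Vars.Section} is inherently one-sided: secant planes manufacture nonzero Koszul classes but yield no upper estimate on the range in which they occur, so a proof of the conjectured vanishing requires a genuinely new ingredient that bounds the Betti numbers from above. The inductive scheme above merely relocates this difficulty to its base cases, whose tractability is unclear; and the representation-theoretic route, while it does deliver both bounds, forces one to control the plethysm $\Lambda^{p+1}(S^{d}U)$, whose decomposition into Schur functors is notoriously intractable. Already the case $q=2$ is part of the Ottaviani--Paoletti conjecture, which remains open for $\PP^n$ when $n\ge3$.
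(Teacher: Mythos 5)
The statement you are asked to prove is stated in the paper only as a conjecture: the authors explicitly do not prove it, remark that it contains the (still open) Ottaviani--Paoletti conjecture on property $N_{3d-3}$ for $\OO_{\PP^n}(d)$, $n\ge 3$, as a special case, and describe it as ``probably very difficult.'' Your proposal does not close this gap. What you actually establish is only the preliminary reduction: the duality of Proposition \ref{Duality} with $p'=r_d-p-n$, $q'=n-q$, $b'=d-n-1-b$ exchanges the upper and lower bounds of Theorem \ref{Effective.Statement.Proj.Space}, so the conjecture is equivalent to the single vanishing $K_{p,q}(\PP^n,b;d)=0$ for $p<\binom{d+q}{q}-\binom{d-b-1}{q}-q$. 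This reduction is sound (it is the mirror image of the computation the authors themselves perform in the proof of Theorem \ref{Effective.Statement.Proj.Space}, where duality is used in the non-vanishing direction), but it is not where the difficulty lives.

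The genuine gap is that neither of the two routes you then sketch is carried out, and each stalls exactly at the hard point. The Bott-theorem route requires controlling which Schur functors occur in the plethysm $\Lambda^{p+1}(S^dU)$ and which of those are Bott-acyclic after the twist by $\OO(b+(q-1)d)$; you acknowledge this is intractable, and no reduction of the threshold $\binom{d+q}{q}-\binom{d-b-1}{q}-q$ to a combinatorial acyclicity criterion is exhibited. The hyperplane-induction route is asserted but not set up: you do not produce the exact sequences comparing $\Lambda^{p+1}M_d$ on $\PP^n$ with the corresponding bundle on $\PP^{n-1}$, do not identify the base cases of the double induction on $(n,b)$, and do not verify that the threshold transforms correctly under either induction step --- which you yourself flag as ``the delicate point.'' As you note, the entire machinery of Sections \ref{Secant.Constructions.Section}--\ref{Veronese.Vars.Section} is one-sided and can only produce nonzero Koszul classes, never an upper bound on where they occur, so the missing ingredient is not a detail but the whole content of the statement. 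The proposal is best read as an accurate assessment of why the conjecture is open, not as a proof of it.
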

 \noi Of course this includes as a special case:
 \begin{conjecture} In the situation of Corollary \ref{Veronese.Stable.Syzygies.Cor}, if 
\[ 
p \ < \ \binom{d+q}{q} \, - \, \binom{d-b-1}{q} \, - \, q,
\] then
$\mathbf{K}_{p,q}(b;d) = \mathbf{0}$. \end{conjecture}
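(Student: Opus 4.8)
We indicate one natural line of attack, and where we expect it to stall. To prove $\mathbf{K}_{p,q}(b;d)=\mathbf{0}$ it suffices, as recalled in the Introduction, to prove $K_{p,q}(\PP(U),b;d)=0$ for a single vector space $U$ with $\dim U\geq p+1$ --- every Schur functor occurring in $\mathbf{K}_{p,q}(b;d)$ is indexed by a partition with at most $p+1$ rows, so the vanishing for one such $U$ kills all the multiplicities. Fix such a $U$, write $\PP^n=\PP(U)$ with $n\geq\max(p,2)$, and let $M_d$ denote the bundle \eqref{Seq.Def.M_V} attached to $\OO_{\PP^n}(d)$. By Proposition \ref{Non.Van.Wedge.p+1.Gives.Koszul} --- applicable here for every $1\leq q\leq n$, since $\HH{1}{\PP^n}{\OO_{\PP^n}(b)}=0$ --- one has
\[
K_{p,q}(\PP^n,b;d)\ =\ \HHH{1}{\PP^n}{\Lambda^{p+1}M_d\otimes\OO_{\PP^n}\big(b+(q-1)d\big)},
\]
and comparing twists gives the identity
\[
K_{p,q}(\PP^n,b;d)\ =\ K_{p,2}\big(\PP^n,\ b+(q-2)d;\ d\big)\qquad(q\geq 2).
\]
Thus the conjecture for weight $q\geq 2$ is really a single weight--two vanishing statement, but in a broader parameter range than Theorem \ref{Effective.Statement.Proj.Space}: one must allow the twist $c=b+(q-2)d$ to be comparable with, or larger than, $d$, precisely the regime in which the weight--two non-vanishing is not treated in this paper. (For $q=1$ the analogous reduction produces a weight--two group with a negative twist, and the threshold is simply $b+1$.) There is, on the other hand, a known anchor at the diagonal: when $q=n$ the duality of Proposition \ref{Duality} turns $K_{p,n}(\PP^n,b;d)$ into a weight--$0$ group whose vanishing is completely controlled by Proposition \ref{Kp0.Proposition}; this is exactly Remark \ref{Kp0.Kpn.Proj.Space}, and there the bound agrees with the conjectured one.

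The plan is then a descending induction from that diagonal: prove the vanishing on $\PP^n$ in weight $q$ assuming it on $\PP^{n-1}$ in weight $q$ (so $n-q$ decreases), the base case $q=n$ being the one just recalled. For the inductive step one imitates, in the vanishing direction, the hyperplane--section machinery of \S\ref{Secant.Constructions.Section}. Put $\mathcal{G}=\Lambda^{p+q-1}M_d\otimes\OO_{\PP^n}(b+d)$, so that $\HH{q-1}{\PP^n}{\mathcal{G}}=K_{p,q}(\PP^n,b;d)$ by Corollary \ref{Higher.Cohom.Interpr.Kpq}, and restrict to a general hyperplane $\PP^{n-1}\subseteq\PP^n$. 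Lemma \ref{Splitting.Proposition} gives a splitting $M_d\otimes\OO_{\PP^{n-1}}\cong V^\pr_{\PP^{n-1}}\oplus M_d^{(n-1)}$ with $V^\pr=\HH{0}{\PP^n}{\OO_{\PP^n}(d-1)}$ and $M_d^{(n-1)}$ the analogue of $M_d$ on $\PP^{n-1}$, whence
\[
\HH{q-1}{\PP^{n-1}}{\mathcal{G}\otimes\OO_{\PP^{n-1}}}\ =\ \bigoplus_{a\geq 0}\ \Lambda^{a}V^\pr\otimes K_{p-a,\,q}\big(\PP^{n-1},b;d\big),
\]
which vanishes by the inductive hypothesis since each index $p-a$ is $\leq p$ and the threshold does not change when $n$ drops. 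Feeding this into the cohomology sequence of $0\to\mathcal{G}(-1)\to\mathcal{G}\to\mathcal{G}\otimes\OO_{\PP^{n-1}}\to 0$ reduces the vanishing of $\HH{q-1}{\PP^n}{\mathcal{G}}$ to that of $\HH{q-1}{\PP^n}{\mathcal{G}(-1)}$, which by Corollary \ref{Higher.Cohom.Interpr.Kpq} again is $K_{p,q}(\PP^n,b-1;d)$ --- leading to a secondary induction on $b$ which one would try to carry into negative values, terminating once $b$ is so negative that $S^{qd+b}U=0$ and the relevant Koszul spot is trivially exact. In characteristic zero there is a parallel route: decompose the Koszul complex $\Lambda^{p+1}(S^dU)\otimes S^{(q-1)d+b}U\to\Lambda^p(S^dU)\otimes S^{qd+b}U\to\Lambda^{p-1}(S^dU)\otimes S^{(q+1)d+b}U$ into isotypic pieces and show each multiplicity complex is exact below the threshold, using the plethysm $\Lambda^p(S^dU)$ together with, say, the finiteness of these syzygy functors in the sense of \cite{Snowden} and the classical $\mathrm{SL}_2$ case $n=1$.

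The real obstacle, as far as I can see, is that this induction does not close. A degree--one hyperplane section forces $b$ down to $b-1$, but the conjectured threshold for $b-1$ is strictly smaller than the one for $b$; so along the secondary induction the vanishing range one may assume shrinks below the range one needs, and for negative $b$ the group $K_{p,q}(\PP^n,b;d)$ is in fact non-zero for some $p$ below the original threshold. The structurally correct remedy --- to cut instead by a divisor in $\linser{\OO_{\PP^n}(d)}$, which preserves the graded $S$--module structure --- is unavailable because such a divisor is not a projective space, so the inductive hypothesis evaporates. In the representation--theoretic route the obstruction is the notorious intractability of the plethysm $\Lambda^p(S^dU)$ and of the Koszul differentials on it. Since already the case $q=2$ of the conjecture is the open conjecture of Ottaviani and Paoletti \cite{OP}, it seems likely that a genuinely new ingredient is needed: for instance Boij--S\"oderberg--type constraints on the Betti tables that can occur, in the spirit of Eisenbud--Schreyer \cite{ES}, or a flat degeneration of the Veronese ideal to a monomial ideal whose Koszul cohomology is combinatorially computable.
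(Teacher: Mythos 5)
This statement is one of the paper's open conjectures, not a theorem: the paper gives no proof of it, and indeed remarks that these vanishing conjectures ``are probably very difficult,'' noting that already the case $q=2$, $b=0$ contains the open conjecture of Ottaviani and Paoletti \cite{OP}. So there is no proof in the paper to compare yours against, and your bottom-line conclusion --- that the natural inductions do not close and a genuinely new ingredient is needed --- is the correct one. You have not proved the statement, you say so explicitly, and that is the right assessment.

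Your partial analysis is sound and consistent with the paper's machinery: the reduction to a single $U$ with $\dim U\ge p+1$ is Rubei's stabilization as used in \S\ref{Veronese.Vars.Section}; the identification of $K_{p,q}(\PP^n,b;d)$ with $\HH{1}{\PP^n}{\Lambda^{p+1}M_d\otimes\OO_{\PP^n}(b+(q-1)d)}$, and hence with a weight-two group with twist $b+(q-2)d$, is a correct application of Proposition \ref{Non.Van.Wedge.p+1.Gives.Koszul}; the splitting on a hyperplane is Lemma \ref{Splitting.Proposition}; and the diagonal base case $q=n$ via duality and $K_{p,0}$ is exactly Remark \ref{Kp0.Kpn.Proj.Space}. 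Your diagnosis of where the argument dies is also exactly right: the restriction sequence trades $b$ for $b-1$, the conjectural threshold $\binom{d+q}{q}-\binom{d-b-1}{q}-q$ strictly decreases under that substitution, and Proposition \ref{Weaker.Non.Van.Veronese.Syz} shows that the groups $K_{p,q}(\PP^n,b-1;d)$ one would need to kill are in fact nonzero in precisely the gap between the two thresholds. (One minor point: in the long exact sequence the vanishing of $\HH{q-1}{\PP^{n-1}}{\mathcal{G}\otimes\OO_{\PP^{n-1}}}$ only gives that $\HH{q-1}{\PP^n}{\mathcal{G}(-1)}$ surjects onto $\HH{q-1}{\PP^n}{\mathcal{G}}$, which is what you need; but even granting this, the secondary induction on $b$ has no terminus in the range where the conjecture has content, as you observe.) In short: the statement remains open, and your write-up correctly identifies it as such and correctly locates the obstruction.
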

  \noi 
These are probably  very difficult.\footnote{For example, even the conjecture of Ottaviani and Paoletti that $\OO_{\PP^n}(d)$ satisfies property $N_{3d-3}$ for every $n \ge 3$ is open.} So it would already be very encouraging to establish
 Conjecture \ref{Vanishing.Kpq.Conjecture} for $X = \PP^n$: 
 \begin{problem}
 Prove that  
  $\mathbf{K}_{p,q}(b;d) = \mathbf{0}$ when 
 $ p   \le  O( d^{q-1})$.
 \end{problem}
 
  In another direction, one expects that the groups $K_{p,q}(\PP^n,b;d)$ should become quite complicated as the parameters grow. In an attempt to give concrete meaning to this philsophy in characteristic zero, we propose:
  \begin{problem} \label{Many.Schur.Functors}
  Fix $1 \le q \le n$. Prove that if
  \[  p \ \gg \ \binom{d+q}{q} - \binom{d -b-1}{q} -q, \]
  then many different Schur functors $\mathbf{S}_\lambda$ appear non-trivially in $\mathbf{K}_{p,q}(b;d)$. 
  \footnote{Fulger and Zhou have obtained some results in this direction.}
  \end{problem}
 
 It would also be interesting to extend the results for projective space to other ``concrete" varieties.
 \begin{problem} \label{Effective.Statement}
 Prove effective non-vanishing statements for $K_{p,q}(X, B; L_d)$ when $X$ is a rational homogeneous space $G/P$, or a toric variety.
 \end{problem}
 \noi This might be interesting already for Grassmannians.\footnote{Zhou has established effective statements on an arbitrary smooth complex variety provided that  $B = K_X + (n+1)A$ is of adjoint type. His results reduce to Theorem \ref{Effective.Statement.Proj.Space}
 when $X = \PP^n$. 
}

 There has been considerable interest in recent years in extending to the multi-graded case some of the basic algebraic facts concerning linear series on algebraic varieties, eg.\ \cite{HSS}, \cite{MS}, \cite{SVW}. So it is natural to ask whether the results of the present paper admit such an extension. For example, consider ample divisors 
  $A_1, \ldots , A_m$  on a smooth projective variety $X$, and fix arbitrary $P_1, \ldots , P_m$. Given $\mathbf{d}= (d_1, \ldots, d_m)$, with the $d_i$ large, we get an embedding
   \[   X \subset \PP^{  \mathbf{ r_d}} \ =_{\text{def}} \ \PP\big((H^0(P_1+d_1A_1)\big) \times \ldots \times \PP\big( H^0(P_m+d_m A_m) \big). \]
   The multi-homogeneous ideal of $X$ in $\PP^{  \mathbf{ r_d}} $ admits a multi-graded resolution.
\begin{problem}
What can one say about the asymptotic behavior of this resolution as the $d_i$ become very large?
 \end{problem}
 \noi The question is whether there is some interesting multi-graded behavior that does not follow formally from the singly-graded case.\footnote{One might want to assume that the numerical classes of the $A_i$ are linearly independent in $N^1(X)_\RR$.} As far as we know, even the ``classical" results such as Green's theorem for curves and its outgrowths have not been studied in the multi-graded setting.
  
In another direction, as we noted in Remark \ref{Zhou.Remark},  Zhou \cite{Zhou} has shown that one can remove the non-singularity hypothesis from the main non-vanishing theorem, as well as relaxing the conditions on $B$. So the basic asymptotic picture we obtain seems to hold quite generally. This suggests: \begin{problem}
Is there a purely algebraic statement that underlies, or runs parallel to, our results?
 \end{problem}
 
 Finally, we close with  the somewhat vague
 \begin{problem}
 Can one say anything about the   structural properties of the syzygies of $R(B;L_d)$ as $d$ grows?
 \end{problem}
 \noi The fact that generators appear in many degrees does not preclude the possibility that the resolution somehow acquires a relatively simple structure. For example, although it runs in a somewhat different direction, the very interesting work of Snowden \cite{Snowden} shows that there is some   subtle and unexpected finiteness in the syzygies of Segre varieties as one varies the number and dimensions of the factors.  It would be very interesting to know whether something analogous happens in our setting as the positivity of the embedding grows.

 %
 %
 %
 %

 \end{document}